\theoremstyle{plain}
\newtheorem{theorem}{Theorem}[section]
\newtheorem{lemma}[theorem]{Lemma}
\theoremstyle{remark}
\newtheorem{remark}[theorem]{Remark}
\newtheorem{definition}[theorem]{Definition}
\def\ud{\, \mathrm{d}}
\numberwithin{equation}{section}
\begin{document}
\begin{frontmatter}
\title{Linear Multifractional Stable Sheets in the Broad Sense: Existence and Joint Continuity of Local Times}
\runtitle{Linear Multifractional Stable Sheets in the Broad Sense}

\begin{aug}
\author[A]{\fnms{Yujia} \snm{Ding}\ead[label=e1,mark]{yujia.ding@cgu.edu}}
\author[A]{\fnms{Qidi} \snm{Peng}\ead[label=e2,mark]{qidi.peng@cgu.edu}}
\and
\author[B]{\fnms{Yimin} \snm{Xiao}
\ead[label=e3]{xiaoy@msu.edu}}
\address[A]{Institute of Mathematical Sciences, Claremont Graduate University,
\printead{e1,e2}}

\address[B]{Department of Statistics and Probability, Michigan State University,
\printead{e3}}

\runauthor{Y. Ding et al.}
\end{aug}

\begin{abstract}
We introduce the notion of \textit{linear multifractional stable sheets in the broad sense} (LMSS) with $\alpha\in(0,2]$, 
to include both linear multifractional Brownian sheets ($\alpha=2$) and linear multifractional stable sheets ($\alpha<2$). 
The purpose of the present paper is to study the existence and joint continuity of the local times of LMSS, 
and also the local H\"older condition of the local times in the set variable. Among the main results of this 
paper,  Theorem \ref{thm:existence} provides a \textit{sufficient and necessary condition} for the existence of local times 
of LMSS; Theorem \ref{thm::joint_con} shows a  \textit{sufficient condition} for the joint continuity of local times; and  
Theorem \ref{thm::holder}  proves a sharp local H\"older condition for the local times in the set variable. All these 
theorems improve significantly the existing results for the local times of multifractional Brownian sheets and linear 
multifractional stable sheets in the literature.
\end{abstract}

\begin{keyword}[class=MSC2020]
\kwd[Primary ]{60G52}
\kwd{60G22}
\kwd[; secondary ]{60G60}
\end{keyword}

\begin{keyword}
\kwd{Linear multifractional Brownian sheets}
\kwd{linear multifractional stable sheets}
\kwd{local times}
\kwd{joint continuity}
\end{keyword}
\end{frontmatter}


\section{Introduction}
\label{sec:introduction}
The purpose of this paper is to develop a unified framework for improving the results concerning the local times of multifractional 
Brownian sheets \cite{MWX08}, linear fractional stable sheets \cite{ARX07b}, and linear multifractional stable sheets 
\cite{shen2020local}. We will use the notion of linear multifractional stable sheets in the broad sense (LMSS), 
where the stability index is  $\alpha$, which controls the tail-heaviness of the distributions, is ranged in $(0,2]$. 
As a consequence of the present paper, some results obtained in \cite{ARX07b,AWX08,AX05,BDG06,MWX08,shen2020local,X97,X11} 
are extended to the setting of LMSS and improved significantly. Below we describe the main contributions of this paper:
\begin{description}
\item[(i).] A sufficient condition for the existence of local times of multifractional Brownian sheets was given in \cite{MWX08}, 
which was extended to linear multifractional stable sheets  in \cite{shen2020local}. However, their conditions are not optimal. 
In particular, no \textit{necessary condition} had been proved for the existence of local times of multifractional 
Brownian sheets or linear multifractional stable sheets in the literature. We fill this gap by proving in Theorem \ref{thm:existence} a sufficient 
and necessary condition for the existence of the local times of LMSS. This solves completely the problem on the existence of local times
for multifractional Brownian sheets in \cite{MWX08} and linear multifractional stable sheets in \cite{shen2020local}.
\item[(ii).] In Theorem \ref{thm::joint_con}, we provide a sufficient condition for the joint continuity of the local times of LMSS, which 
is significantly weaker than the conditions proved in \cite{MWX08}  for multifractional Brownian sheets  and in 
\cite{shen2020local} for linear multifractional stable sheets. We remark that \cite{shen2020local} makes crucial use of 
the arguments in \cite{X11}, which rely on the local nondeterminism property and the assumption of 
$\alpha \in (1, 2)$. Our Theorem \ref{thm::joint_con} holds for all  $\alpha \in (0,2]$ and its proof builds upon 
an extension of the direct approach in  \cite{ARX07a} for linear fractional stable sheets that can provide more precise 
information on the upper bound for the moments of local times than those in \cite{shen2020local,X11}. 
\item[(iii).] We prove a local H\"older condition for the  local times of LMSS, see Theorem \ref{thm::holder}. 
This theorem is useful for studying the  local Hausdorff dimension and exact Hausdorff measure of the level sets of 
LMSS \cite{AX05,X99}. This latter problem goes beyond the scope of the present paper and we plan to study it in a 
subsequent paper. 
\item[(iv).] Through proving the aforementioned theorems we have extended and improved several results in the literature. 
These include Lemma \ref{int_exp}, Lemma \ref{lem::bound_increm} and Remark \ref{cor_Lemma}, which 
may have their own interests.
\end{description}

Throughout this paper, if not specified, we adopt the following notations and assumptions:
\begin{itemize}
\item For $0<\epsilon<T$, let $I:=[\epsilon,T]^N$. For every $u,~v\in\mathbb{R}^N$ such that $u_l \le v_l~(l=1,\ldots,N)$, 
$[u, v]$ denotes the closed rectangle defined by $[u,v]:=\prod_{l=1}^N [u_l, v_l]$. 
\item For $x,y\in\mathbb R$, define $x\vee y:=\max\{x,y\}$ and $x\wedge y:=\min\{x,y\}$.
\item Denote by $\left|\bullet\right|$ the Euclidean norm.
\end{itemize}

Before we introduce LMSS, let us first define the linear fractional stable
sheet in the broad sense (LFSS). The phrase ``broad sense" refers to the fact that we allow $\alpha\in(0,2]$, instead of treating
the two cases of  $\alpha= 2$ and $\alpha \in(0,2)$ separately.
\begin{definition}[Linear fractional stable sheet in the broad sense]
\label{LFSS}
For any $\alpha\in(0,2]$, any vectorial exponent $H=(h_1,\ldots,h_N) \in (0,1)^N$, a real-valued linear fractional stable sheet in 
the broad sense  $X_0^H=\{X_0^H(u),u\in\mathbb{R}_{+}^N\}$ is defined via the following integral representation:
\begin{equation}\footnotesize
\label{eqn::def_lfss}
X_0^H(u) := \int_{\mathbb{R}^N} g^H(u,v) M_{\alpha}(\ud v), \quad \mbox{for all}~ u \in \mathbb{R}_{+}^N:=[0,+\infty)^N,
\end{equation}
where:
\begin{itemize}
\item For $\alpha \in (0, 2)$, $M_{\alpha}$ denotes a rotationally invariant $\alpha$-stable random measure on 
$\mathbb{R}^N$ with Lebesgue control measure. When $\alpha=2$, $M_\alpha$ stands for the standard Gaussian 
measure (or Gaussian white noise). See Samorodnitsky and Taqqu \cite{Taqqu1994} for the definition and properties 
of the integral in (\ref{eqn::def_lfss}).

\item The kernel function $g^H:~\mathbb R_+^N\times \mathbb R^N\to \mathbb R_+$ is defined as
\begin{equation}\footnotesize
\label{eqn::g_u_v}
g^H(u,v) := c_H\prod_{l=1}^N \left[(u_l-v_l)^{h_l-1/\alpha}_{+} - (-v_l)^{h_l-1/\alpha}_{+}\right],
\end{equation}
where $a_+:=\max\{a,0\}$ for $a \in \mathbb R$ and the normalizing constant $c_H>0$ is chosen such that 
$\|X_0^H(1)\|_\alpha=1$ (see the forthcoming (\ref{def:la_norm}) for the definition of $\|\bullet\|_\alpha$). 
\end{itemize}

For any integer $d\ge 1$, an $(N,d)$-LFSS is defined by
\footnotesize
$$
X^H=\big\{X^H(u),~u\in\mathbb R_+^N\big\}:=\Big\{\big(X_1^H(u),\ldots,X_d^H(u)\big),~u\in\mathbb R_+^N\Big\},
$$
\normalsize
where $X_1^H,\ldots,X_d^H$ are $d$ independent copies of $X_0^H$.
\end{definition}

\begin{remark}
When $\alpha=2$, $X^{H}$ is known as a fractional Brownian sheet. When $h_1=\ldots=h_N=1/\alpha$,  $X^H$ becomes 
the  ordinary stable sheet, studied in \cite{E81}.
\end{remark}

Several authors have studied the sample path properties of the LFSS $X^H$. For example, Ayache et al. \cite{ARX07a,ARX09} 
considered the local and asymptotic properties of the paths of the LFSS with $\alpha<2$. Ayache et al. \cite{ARX07b} proved 
the existence and joint continuity of the  local times of LFSS with $\alpha<2$, subject to some conditions on
$d$ and $H$. Xiao and Zhang \cite{Xiao2002} and Ayache et al. \cite{AWX08} studied the existence and joint continuity of the local
times of fractional Brownian sheets (LFSS with $\alpha=2$), respectively.  Xiao \cite{X11} proved that LFSS has the 
property of sectorial local nondeterminism and applied this property to study the regularity properties of the local times of LFSS.
The results show that the regularity and fractal properties of LFSS $X^H$ are determined by the stability index 
$\alpha$ and the constant vector $H=(h_1,\ldots,h_N).$ In particular, many random sets generated by $X^H$ such as its trajectories 
and level sets are ``monofractals".

In order to construct more flexible stochastic models with varying local regularity and fractal properties, 
Steov and Taqqu \cite{ST04,ST05} introduced linear multifractional stable motion and studied its stochastic 
and sample properties. Recently Shen et al. \cite{shen2020local} obtained a sufficient condition 
for the existence of local times of the linear multifractional stable sheets with $\alpha<2$.  By extending 
the definition of linear multifractional stable motion in \cite{ST04,ST05,shen2020local} 
to the random field setting in  Definition \ref{LFSS}, 
we define the linear multifractional stable sheet in the broad sense (LMSS) as follows.

\begin{definition}[Linear multifractional stable sheet in the broad sense]
\label{LMSS}
Let $\alpha\in (0,2]$ and let $H(u)=(h_1(u),\ldots,h_N(u))$, $u\in\mathbb{R}_+^N$, be a deterministic 
function such that
\footnotesize
\begin{equation*}
0<h_l(u)<1 \quad \textrm{ for all }u\in \mathbb{R_+^N}~\mbox{and all}~ l\in\{1,\ldots,N\}.
\end{equation*}
\normalsize
A real-valued linear multifractional stable sheet in the broad sense with Hurst functional index $H(\bullet)$ 
 is defined by
\begin{equation}\footnotesize
\label{eqn::def_lmss}
X_0^{H(u)}(u) := \int_{\mathbb{R^N}} g^{H(u)}(u,v) M_{\alpha}(\ud v), \quad u\in \mathbb{R}^N_{+},
\end{equation}
where $M_{\alpha}$  is given in Definition \ref{LFSS} and, for any $u, v \in\mathbb R_+^N$, 
 $g^{H(u)} (u, v)$ is defined as in (\ref{eqn::g_u_v}) with $h_l$ replaced by $h_l(u)$ and with its normalizing constant 
 $c_{H(1)}$ chosen to satisfy $\|X_0^{H(1)}(1)\|_\alpha=1$.

For $d\ge1$, an $(N,d)$-LMSS is defined to be
\footnotesize
\begin{equation*}
X^{H(\bullet)}=\big\{X^{H(u)}(u),~u\in\mathbb R_+^N\big\}:=\Big\{\big(X_1^{H(u)}(u),\ldots,X_d^{H(u)}(u) \big),~u\in\mathbb R_+^N\Big\},
\end{equation*}
\normalsize
where $X_1^{H(\bullet)},\ldots,X_d^{H(\bullet)}$ are $d$ independent copies of $X_0^{H(\bullet)}$.
\end{definition}
\begin{remark}\ \
\begin{itemize}
\item Recall from \cite{Taqqu1994,Nolan1989,X11} that, for $\alpha\in(0,2]$, if $\{\widetilde X(u),~u\in\mathbb R_+^N\}$
is a symmetric $\alpha$-stable random field having the following integral representation:
\begin{equation}\footnotesize
\label{def:X_general}
\widetilde X(u):=\int_{\mathbb R^N}g(u,v)M_\alpha(\ud v),~\mbox{for all}~u\in\mathbb R_+^N,
\end{equation}
then for all $a_1,\ldots,a_n\in\mathbb R,~u^1,\ldots,u^n\in\mathbb R_+^N$, the characteristic function of the joint distribution 
of $(\widetilde X(u^1),\ldots, \widetilde X(u^n))$
is given by
\begin{equation}\footnotesize
\label{characteristic}
\mathbb E\Big(e^{i\sum_{j=1}^na_j\widetilde X(u^j)}\Big)=e^{-\big\|\sum_{j=1}^na_j\widetilde X(u^j)
\big\|_\alpha^\alpha},
\end{equation}
where
\begin{equation}\footnotesize
 \label{def:la_norm}
    \Big\|\sum_{j=1}^na_j\widetilde X(u^j)\Big\|_\alpha^\alpha:=\int_{\mathbb R^N}\Big|\sum_{j=1}^na_jg(u^j,v)\Big|^\alpha\ud v.
\end{equation}
It is worth noting that $\|\bullet\|_\alpha$ defines an $L^\alpha$-norm only when $\alpha\ge1$; when $\alpha\in(0,1)$, $\|\bullet\|_\alpha$ does not satisfy the triangle inequality and it is called an $L^\alpha$-quasinorm.
\item In particular when $\widetilde X= X_0^{H(\bullet)}$, the characteristic function of $(X_0^{H(u^1)}(u^1)$, 
$\ldots,$ $X_0^{H(u^n)}(u^n))$
is given by
\footnotesize
\begin{equation}
\label{characteristic_LMSS}
\mathbb E\Big(e^{i\sum_{j=1}^na_jX_0^{H(u^j)}(u^j)}\Big)=e^{-\big\|\sum_{j=1}^na_jX_0^{H(u^j)}(u^j)
\big\|_\alpha^\alpha}
\end{equation}
\normalsize
for all $a_1,\ldots,a_n\in\mathbb R~$ and $~u^1,\ldots,u^n\in\mathbb R_+^N$, where 
\footnotesize
\begin{equation*}
    \Big\|\sum_{j=1}^na_jX_0^{H(u^j)}(u^j)\Big\|_\alpha^\alpha:=\int_{\mathbb R^N}\Big|\sum_{j=1}^na_jg^{H(u^j)}(u^j,v)\Big|^\alpha\ud v.
\end{equation*}
\normalsize
When $\alpha=2$, $X^{H(\bullet)}$ becomes a
multifractional Brownian sheet \cite{MWX08}. Since the normalizing factor in $g^{H(\bullet)}$ is chosen such that $\|X_0^{H(1)}(1)\|_2=1$,
we obtain that: for $\alpha=2$,
\footnotesize
\begin{equation*}
    \Big\|\sum_{j=1}^na_jX_0^{H(u^j)}(u^j)\Big\|_2^2=\frac{1}{2}{\mathbb Var}\bigg(\sum_{j=1}^na_jX_0^{H(u^j)}(u^j)\bigg).
\end{equation*}
\normalsize
 \item When $H(\bullet)\equiv H$ (constant), $X^{H(\bullet)}$ becomes the LFSS  with Hurst index $H$ in Definition \ref{LFSS} 
 (see \cite{ARX07b,ARX07a,ARX09}).
 \end{itemize}
\end{remark}
Next we recall the notion of local times as in \cite{GH80}. For more information on local times of Gaussian
and stable processes or random fields we refer the readers to
\cite{B70,Pitt1978,Nolan1989,D02,BDG06,ARX07b,MWX08,X11} and the references therein.

Let $Y: \mathbb R^N \to \mathbb R^d$ be a (deterministic or random) Borel vector field and let  $\lambda_N$ be
the Lebesgue measure on $\mathbb R^N$. For any Borel set $I\subseteq\mathbb{R}^N$,
the occupation measure of $Y$ on $I$ is the Borel measure on $\mathbb R^d$ defined   by
\begin{equation}\footnotesize \label{occupation}
\mu_I(\bullet) := \lambda_N\left\{t\in I: Y(t) \in \bullet\right\}.
\end{equation}
\begin{definition}[Local time]\label{Def:LT}
If $\mu_I$ is absolutely continuous with respect to the Lebesgue measure $\lambda_d$ in $\mathbb R^d$, 
$Y$ is said to have local times on $I$ and its local time $L(\bullet,I)$ is defined to be the Radon-Nikod\'ym 
derivative of $\mu_I$ with respect to $\lambda_d$, that is,
\footnotesize
\begin{equation*}
L(x,I) := \frac{\ud\mu_I}{\ud\lambda_d}(x), \quad \mbox{for a.e. }~ x \in \mathbb{R}^d,
\end{equation*}
\normalsize
where $x$ and $I$ are called the space variable and time variable, respectively.
\end{definition}
Heuristically speaking, for any Borel set $\mathcal A\subseteq \mathbb R^d$, $\mu_I(\mathcal A)$ measures 
the amount of \enquote{time} that $Y$ spends in $\mathcal A$ during the time period $I$;
$L(x,I)$ measures the amount of ``time'' that $Y$ spends at $x$ during $I$. Sometimes, we write $L(x,t)$ in 
place of $L(x,[0,t])$. 

The following remark contains two consequences of Definition \ref{Def:LT}. We will make use of the second 
observation in the proofs of the main results, Theorems \ref{thm:existence} 
and \ref{thm::joint_con}. 
\begin{remark}\label{re:LT}
Notice that if $Y$ has local times on $I$, for any Borel set $J\subseteq I$, $L(x,J)$ also exists.
On the other hand, if $I_1, \ldots,  I_n$ is an arbitrary partition of $I$ and $Y$ has local times on $I_i$ ($i = 1,\ldots, n$),
 $Y$ admits local times $L(x,I)$ on $I$ and $L(x,I) = \sum_{i=1}^n L(x,I_i)$ a.e..
\end{remark}

\begin{definition}[Joint continuity of local time]
Let $Y: \mathbb R^N \to \mathbb R^d$ be a random field and let  $I=[a,b]\subseteq \mathbb R^N$. If the local 
time of $Y$, $L(x,[a,t])$, is an almost surely continuous function of $(x,t) \in \mathbb{R}^d\times [a,b]$, we say that
$Y$ has a jointly continuous local time on $I$.
\end{definition}

In order to study the existence and joint continuity of the local times of LMSS, we introduce some assumptions 
on the smoothness of the Hurst functional index  $H(\bullet)$, and assume they hold throughout the rest of the paper. 
\begin{enumerate}
\item[$\mathcal H_1$:] For $l=1,\ldots,N$, there are constants $0<m_l< M_l<1$  such that $m_l \le h_l(u) \le M_l$
 for all $u\in\mathbb{R}_{+}^N$.
\item[$\mathcal H_2$:] Let $I\subseteq \mathbb R_+^N$ be a compact set.  There is a constant 
$c=c(I)>0$ such that for $l=1,\ldots,N$,
\footnotesize
\begin{equation*}
|h_l(u) - h_l(v)| \leq c \rho(u,v),~\mbox{for all}~u,v \in I,
\end{equation*}
\normalsize
\end{enumerate}
where $\rho(u,v)$ is the metric in $\mathbb{R}^N$ defined by
\footnotesize
\begin{equation*}
\rho(u,v) := \sum_{l=1}^N \min\left\{ |u_l - v_l|^{m_l},|u_l-v_l|^{M_l}\right\},~ \mbox{for all}~u,v \in \mathbb{R}^N,
\end{equation*}
\normalsize
where $(m_1,\ldots,m_N)$ and $(M_1, \ldots, M_N) \in (0,1)^N$ are given in $\mathcal H_1$.

Notice that Condition $\mathcal H_2$ implies that the Hurst functional index $H(\bullet)$ is continuous. Moreover, 
when $|u-v|\le 1$  we have $\rho(u,v) = \sum_{l=1}^N |u_l-v_l |^{M_l}$. Therefore, the metric $\rho$ coincides locally with the metric $\rho_K$ introduced in \cite{MWX08}. 
\section{Existence of the local times}
\label{sec:local_times}
This section is devoted to studying the existence of local times of  the $(N, d)$-LMSS
$\{X^{H(u)}(u)\}_{u\in\mathbb R_+^N}$. As the first main result, Theorem \ref{thm:existence} derives a necessary and sufficient condition for the existence of local times. The key idea to its proof is: first, observe that the existence of local times is equivalent to
\begin{equation}\footnotesize
\label{proof_step1}
\int_I\int_I \big\| X_1^{H(u)}(u) - X_1^{H(v)}(v) \big\|_{\alpha}^{-d}\ud u\ud v<+\infty.
\end{equation}
Next, by using the fact that $\| X_1^{H(u)}(u) - X_1^{H(v)}(v) \|_{\alpha}$ is compatible with $\sum_{l=1}^N|u_l-v_l|^{ h_l(u)}$ for all $u, v \in I$ (see Lemma \ref{lem::alpha_X_upp_low_bound} below), we obtain that (\ref{proof_step1}) is equivalent to
\begin{equation}\footnotesize
\label{proof_step2}
\int_I\int_I \Big(\sum_{l=1}^N|u_l-v_l|^{h_l(u)}\Big)^{-d}\ud u\ud v<+\infty.
\end{equation}
Finally, by applying Lemma \ref{int_equiv} below to an argument by induction on $N$, we derive the necessary and sufficient condition 
for (\ref{proof_step2}) to hold.

Before stating Theorem \ref{thm:existence}, we give some preliminary results that are useful in its proof. Recall the following elementary inequalities.
\begin{lemma}
\label{ineq:triagnle}
 For any $\alpha>0$,
$x_1,\ldots,x_N\in\mathbb R$,
\begin{equation}\footnotesize
\label{ineq_tri}
\bigg|\sum_{l=1}^Nx_l\bigg|^\alpha\le\left\{
\begin{array}{ll}
N^{\alpha-1}\sum\limits_{l=1}^N|x_l|^\alpha,&\mbox{if $\alpha\ge1$};\\
\sum\limits_{l=1}^N|x_l|^\alpha,&\mbox{if $0<\alpha< 1$}
\end{array}\right.= \big(N^{\alpha-1}\vee1\big)\sum_{l=1}^N|x_l|^\alpha.
\end{equation}
\end{lemma}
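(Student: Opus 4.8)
The plan is to reduce both branches of the bound to inequalities about nonnegative reals and then invoke the appropriate convexity or concavity of the map $t\mapsto t^\alpha$. First I would apply the ordinary triangle inequality $\big|\sum_{l=1}^N x_l\big|\le\sum_{l=1}^N|x_l|$; since $t\mapsto t^\alpha$ is nondecreasing on $[0,\infty)$ for every $\alpha>0$, raising both sides to the power $\alpha$ gives
$$
\Big|\sum_{l=1}^N x_l\Big|^\alpha\le\Big(\sum_{l=1}^N|x_l|\Big)^\alpha.
$$
It therefore suffices to bound $\big(\sum_{l=1}^N|x_l|\big)^\alpha$, and by relabeling I may assume without loss of generality that all the entries are nonnegative.

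For the case $\alpha\ge1$, I would appeal to H\"older's inequality with conjugate exponents $\alpha$ and $\alpha/(\alpha-1)$, the case $\alpha=1$ being trivial. Writing $\sum_{l=1}^N x_l=\sum_{l=1}^N 1\cdot x_l$ yields $\sum_{l=1}^N x_l\le N^{(\alpha-1)/\alpha}\big(\sum_{l=1}^N x_l^\alpha\big)^{1/\alpha}$, and raising this to the power $\alpha$ produces exactly the claimed factor $N^{\alpha-1}$. Equivalently, one could invoke the convexity of $t\mapsto t^\alpha$ through Jensen's inequality applied to the uniform average $\frac1N\sum_{l=1}^N x_l$, which gives the same constant.

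For the case $0<\alpha<1$, the factor $N^{\alpha-1}$ is replaced by $1$, so I would instead establish the subadditivity $\big(\sum_{l=1}^N x_l\big)^\alpha\le\sum_{l=1}^N x_l^\alpha$. Setting $S:=\sum_{l=1}^N x_l$ and assuming $S>0$ (the case $S=0$ being trivial), the key elementary fact is that $t^\alpha\ge t$ for $t\in[0,1]$ when $0<\alpha<1$. Applying this to $t=x_k/S\in[0,1]$ for each $k$ and summing over $k$ gives $\sum_{k=1}^N (x_k/S)^\alpha\ge\sum_{k=1}^N (x_k/S)=1$, that is $S^{-\alpha}\sum_{k=1}^N x_k^\alpha\ge1$, which is precisely the desired subadditivity.

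Finally, the unified right-hand side $\big(N^{\alpha-1}\vee1\big)\sum_{l=1}^N|x_l|^\alpha$ merely records that $N^{\alpha-1}\ge1$ exactly when $\alpha\ge1$ and $N^{\alpha-1}\le1$ when $\alpha\le1$, so the two branches collapse into the single stated bound. Since every step is a textbook application of monotonicity, H\"older's inequality, or the concavity of $t\mapsto t^\alpha$, I do not anticipate any genuine obstacle; the only point requiring slight care is that the two regimes of $\alpha$ call for different tools (convexity for $\alpha\ge1$ versus subadditivity for $\alpha<1$), which is exactly why the statement naturally splits into the two cases.
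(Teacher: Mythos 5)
Your proof is correct and follows essentially the same route as the paper: a case split, with Jensen's inequality (equivalently, H\"older as you phrase it) giving the factor $N^{\alpha-1}$ for $\alpha\ge1$, and subadditivity of $t\mapsto t^\alpha$ giving the constant $1$ for $0<\alpha<1$. The only difference is in the micro-justification of subadditivity: you normalize by $S=\sum_{l}x_l$ and use $t^\alpha\ge t$ on $[0,1]$, which handles all $N$ terms at once, whereas the paper deduces the two-term inequality from the fact that $x\mapsto(x+a)^\alpha-x^\alpha-a^\alpha$ is decreasing on $\mathbb R_+$ and iterates; both are standard and equally valid.
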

\begin{proof}
The inequalities in (\ref{ineq_tri}) hold for $\alpha\ge1$ thanks to Jensen's 
inequality; they hold for $\alpha\in(0,1)$
due to the triangle inequality $
|\sum_{l=1}^Nx_l|^\alpha\le (\sum_{l=1}^N|x_l|)^\alpha
$ and the fact that the mapping $x\mapsto (x+a)^\alpha-x^\alpha-a^\alpha$ with $a>0$ is 
decreasing over $x\in\mathbb R_+$. 
\end{proof}

The lemma below describes an approximation of the increments of $X_0^{H(\bullet)}$ 
in the $L^\alpha$-(quasi)norm.
\begin{lemma}
\label{lem::alpha_X_upp_low_bound}
Let $X_0^{H(\bullet)}$ be an $(N,1)$-LMSS and let $ 0<\epsilon < T$ be 
two constants such that $|T-\epsilon|$ is sufficiently small.
Then there exist constants $0<c_{2,1}\le c_{2,2}$ such that for all $u,v\in I=[\epsilon,T]^N$,
\footnotesize 
\begin{equation}
\label{eqn::X_H_u_bounded}
c_{2,1} \sum_{l=1}^N |u_l-v_l|^{ h_l(\hat{u})} 
\leq \big\| X_0^{H(u)}(u) - X_0^{H(v)}(v) \big\|_{\alpha}\leq c_{2,2} \sum_{l=1}^N |u_l - v_l|^{h_l(\hat{u})},
\end{equation}
\normalsize
for any $\hat{u}:=\left(\hat{u}_1,\ldots,\hat{u}_N\right)\in\prod_{l=1}^N[u_l \wedge v_l, u_l \vee v_l]$.
\end{lemma}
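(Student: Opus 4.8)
The plan is to work directly with the integral expression of the $L^\alpha$-(quasi)norm. By (\ref{def:la_norm}) and the product form (\ref{eqn::g_u_v}) of the kernel, $\|X_0^{H(u)}(u)-X_0^{H(v)}(v)\|_\alpha^\alpha = \int_{\mathbb R^N}\big|g^{H(u)}(u,w)-g^{H(v)}(v,w)\big|^\alpha\ud w$. I would first split the increment into a \emph{pure spatial} part, with the Hurst exponent frozen at $H(u)$, and a \emph{Hurst-variation} part: $g^{H(u)}(u,w)-g^{H(v)}(v,w) = \big[g^{H(u)}(u,w)-g^{H(u)}(v,w)\big] + \big[g^{H(u)}(v,w)-g^{H(v)}(v,w)\big]$. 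Since these are kernels of stable integrals, I can pass between the norm of the full increment and those of the two pieces (via subadditivity of $w\mapsto\int|\cdot|^\alpha\ud w$ when $\alpha<1$ and the triangle inequality when $\alpha\ge1$), provided the second piece is shown to be of smaller order than the first.

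For the pure spatial part I would establish the two-sided bound $\big\|X_0^{H(u)}(u)-X_0^{H(u)}(v)\big\|_\alpha \asymp \sum_{l=1}^N|u_l-v_l|^{h_l(u)}$, i.e. the LFSS estimate with constant exponent $H=H(u)$. Writing $\phi_h(s,w):=(s-w)_+^{h-1/\alpha}-(-w)_+^{h-1/\alpha}$, the upper bound comes from telescoping $\prod_l\phi_{h_l(u)}(u_l,w_l)-\prod_l\phi_{h_l(u)}(v_l,w_l)$ into $N$ terms, each carrying exactly one distinguished factor $\phi_{h_l(u)}(u_l,w_l)-\phi_{h_l(u)}(v_l,w_l)$; applying Lemma \ref{ineq:triagnle} and the product structure of $\ud w$, the $\alpha$-power of the norm factorizes over coordinates into one-dimensional integrals. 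The self-similarity identities $\int_{\mathbb R}|\phi_h(s,w)-\phi_h(t,w)|^\alpha\ud w = c_h|s-t|^{\alpha h}$ and $\int_{\mathbb R}|\phi_h(s,w)|^\alpha\ud w = c_h|s|^{\alpha h}$ then give $\lesssim\sum_l|u_l-v_l|^{\alpha h_l(u)}$ (the undistinguished factors being bounded since $u_k,v_k\in[\epsilon,T]$ and $h_k(u)\in[m_k,M_k]$), and taking $\alpha$-th roots yields the upper bound. The matching lower bound is the main obstacle: the tensor-product structure permits cancellation, so one cannot merely discard terms. I would adapt the \emph{direct approach} of Ayache, Roueff and Xiao, locating for the dominant coordinate $l_0=\argmax_l|u_l-v_l|^{h_l(u)}$ a set of $w$ of controlled Lebesgue measure on which $|g^{H(u)}(u,w)-g^{H(u)}(v,w)|\gtrsim\max_l|u_l-v_l|^{h_l(u)}$, producing $\gtrsim\max_l|u_l-v_l|^{h_l(u)}\asymp\sum_l|u_l-v_l|^{h_l(u)}$; this must be carried out uniformly for all $\alpha\in(0,2]$, including the Gaussian case $\alpha=2$.

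Next I would control the Hurst-variation term. Since $h\mapsto\phi_h(s,w)$ is smooth with $\partial_h\phi_h$ involving only logarithmic factors, combining this with $\mathcal H_2$ (so $|h_l(u)-h_l(v)|\le c\,\rho(u,v)$) and $u,v\in[\epsilon,T]^N$ should give $\big\|X_0^{H(u)}(v)-X_0^{H(v)}(v)\big\|_\alpha \lesssim \rho(u,v)=\sum_{l=1}^N|u_l-v_l|^{M_l}$. Because $M_l\ge h_l(u)$ and $|u_l-v_l|\le|T-\epsilon|<1$, we have $|u_l-v_l|^{M_l}=|u_l-v_l|^{h_l(u)}\,|u_l-v_l|^{M_l-h_l(u)}$, so this term is a vanishing fraction of $\sum_l|u_l-v_l|^{h_l(u)}$ once $|T-\epsilon|$ is small. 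Feeding this into the (quasi-)triangle inequalities of the first paragraph upgrades the spatial estimate to $\big\|X_0^{H(u)}(u)-X_0^{H(v)}(v)\big\|_\alpha\asymp\sum_l|u_l-v_l|^{h_l(u)}$, i.e. the assertion in the special case $\hat u=u$.

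Finally I would remove the dependence on the base point by showing $S_{\hat u}:=\sum_l|u_l-v_l|^{h_l(\hat u)}\asymp S_u:=\sum_l|u_l-v_l|^{h_l(u)}$ uniformly over all intermediate $\hat u$. The two key facts are $\rho(u,v)=\sum_k|u_k-v_k|^{M_k}\le S_u$ (as $M_k\ge h_k(u)$) and that $S_u$ is small with $S_u|\ln S_u|\to 0$ as $|T-\epsilon|\to0$. For each coordinate I write $|u_l-v_l|^{h_l(\hat u)}=|u_l-v_l|^{h_l(u)}\,|u_l-v_l|^{h_l(\hat u)-h_l(u)}$ with $|h_l(\hat u)-h_l(u)|\le c\rho(u,v)\le cS_u$, then split the indices into those with $|u_l-v_l|^{h_l(u)}\ge S_u^2$ (where $|\ln|u_l-v_l||\lesssim|\ln S_u|$, so the perturbation factor is bounded) and those with $|u_l-v_l|^{h_l(u)}<S_u^2$ (where the exponent stays above $h_l(u)/2>0$, so the term is at most $S_u$); this gives $S_{\hat u}\le(e+N)S_u$, and the reverse bound follows by symmetry. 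Combining the three steps yields the claim for every $\hat u$. The lower bound of the frozen-exponent estimate in the second paragraph is where the genuine difficulty lies; the rest is a matter of organizing telescoping estimates and exploiting the smallness of $|T-\epsilon|$.
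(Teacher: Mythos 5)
Your overall architecture (freeze the Hurst exponent, estimate the frozen LFSS increment, control the Hurst-variation term, then transfer the exponent from $h_l(u)$ to $h_l(\hat u)$) is reasonable, and both your upper bound and your base-point-change argument $\sum_l|u_l-v_l|^{h_l(\hat u)}\asymp\sum_l|u_l-v_l|^{h_l(u)}$ are sound. The genuine gap is in the lower bound, specifically the claim that the Hurst-variation term is a \emph{vanishing} fraction of $S_u:=\sum_{l}|u_l-v_l|^{h_l(u)}$ once $|T-\epsilon|$ is small. Condition $\mathcal H_1$ only gives $h_l(u)\le M_l$, so the factor $|u_l-v_l|^{M_l-h_l(u)}$ you exhibit need not tend to $0$: if $h_l(u)=M_l$ (or $\sup_I h_l=M_l$), it equals (or approaches) $1$, and then $\rho(u,v)$ is of exactly the same order as $S_u$, no matter how small $T-\epsilon$ is. Worse, the constant in the bound $\|X_0^{H(u)}(v)-X_0^{H(v)}(v)\|_\alpha\lesssim\sum_l|h_l(u)-h_l(v)|\lesssim\rho(u,v)$ is proportional to the H\"older constant $c$ of $\mathcal H_2$, which can be arbitrarily large; so all you can guarantee is that the Hurst-variation term is at most $c'S_u$ with $c'$ possibly much larger than the constant $c_1$ in your frozen-exponent lower bound. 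The reverse triangle (or quasi-triangle, for $\alpha<1$) inequality then yields $\|X_0^{H(u)}(u)-X_0^{H(v)}(v)\|_\alpha\ge c_1S_u-c'S_u$, which is vacuous. Since the frozen increment and the Hurst-variation increment are functionals of the same noise, cancellation between them cannot be excluded by norm estimates alone, so this step genuinely fails rather than merely needing a sharper constant.

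The standard repair --- and what lies behind the results the paper invokes --- is to lower-bound the \emph{full} increment directly: restrict the integral $\int_{\mathbb R^N}\big|g^{H(u)}(u,w)-g^{H(v)}(v,w)\big|^\alpha\ud w$ to a strip such as $\{w:\ w_l\in(u_l\wedge v_l,\,u_l\vee v_l]\}$ (intersected with suitable sets in the other coordinates), on which the kernel of one of the two terms vanishes identically, so that no cancellation with the Hurst-variation part can occur; this produces $c\,|u_l-v_l|^{\alpha h_l(\cdot)}$ for each $l$ directly, and your final step then converts the exponent to $h_l(\hat u)$. Note also that the paper itself does not reprove any of this: its proof of Lemma \ref{lem::alpha_X_upp_low_bound} consists of citing \cite[Lemma 2.2]{MWX08} for $\alpha=2$ and \cite[Lemma 3.2]{shen2020local} for $\alpha<2$, which give the two-sided bound in the form $\big(\sum_{l=1}^N|u_l-v_l|^{\alpha h_l(\hat u)}\big)^{1/\alpha}$, and then converting to $\sum_{l=1}^N|u_l-v_l|^{h_l(\hat u)}$ with the elementary Lemma \ref{ineq:triagnle}. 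Your proposal is in effect an attempt to reprove those cited lemmas from scratch; that is legitimate in principle, but then the frozen-case lower bound, which you defer to ``the direct approach'' of \cite{ARX07a} and yourself identify as the main obstacle, must actually be carried out --- and it must be carried out on the full increment as described above, not welded to the drift term by a triangle inequality.
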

\begin{proof}
First,  it was proved in \cite[ Lemma 2.2]{MWX08} 
for $\alpha=2$ and in \cite[Lemma 3.2]{shen2020local} for $0< \alpha<2$, that for $|T-\epsilon|$ being sufficiently small, 
there exist constants $0<c_{2,3}\le c_{2,4}$ such that for all $u,v\in I=[\epsilon,T]^N$,
\footnotesize 
\begin{equation}
\label{eqn::X_H_u_bounded_1}
c_{2,3} \Big(\sum_{l=1}^N |u_l-v_l|^{\alpha h_l(\hat{u})}\Big)^{1/\alpha} 
\leq \big\| X_0^{H(u)}(u) - X_0^{H(v)}(v) \big\|_{\alpha}\leq c_{2,4} \Big(\sum_{l=1}^N |u_l - v_l|^{\alpha h_l(\hat{u})}\Big)^{1/\alpha},
\end{equation}
\normalsize
for any $\hat{u}:=\left(\hat{u}_1,\ldots,\hat{u}_N\right)\in\prod_{l=1}^N[u_l \wedge v_l, u_l \vee v_l]$. Next using Lemma \ref{ineq:triagnle} yields
\footnotesize 
\begin{equation}
\label{eqn::X_H_u_bounded_2}
(N^{\alpha-1}\vee1)^{-1/\alpha}\sum_{l=1}^N |u_l-v_l|^{ h_l(\hat{u})}\le\Big(\sum_{l=1}^N |u_l-v_l|^{\alpha h_l(\hat{u})}\Big)^{1/\alpha}\le \big(N^{1/\alpha-1}\vee1\big)\sum_{l=1}^N |u_l-v_l|^{ h_l(\hat{u})}.
\end{equation}
\normalsize
Finally (\ref{eqn::X_H_u_bounded}) follows from (\ref{eqn::X_H_u_bounded_1}) and (\ref{eqn::X_H_u_bounded_2}).
\end{proof}
The following lemma is an extension  of \cite[Lemma 8.6]{X09}.
\begin{lemma}
\label{int_equiv}
	Let $\alpha>0,\beta \ge 0$ and  $0\leq a<b$ be constants. Then for all $A> 0$ and $t_0\in[a,b]$,
	\begin{equation}\footnotesize
	\label{lem::ele_int_result_2}
	\int_{a}^{b}(A+|t-t_0|^\alpha)^{-\beta}\ud t\asymp
	\left\{\begin{array}{lll}
	A^{-(\beta-1/\alpha)}&\mbox{if }\alpha\beta>1,\\
	\log\big((1+(b-t_0)A^{-1/\alpha})(1+(t_0-a)A^{-1/\alpha})\big)&\mbox{if }\alpha\beta=1,\\
	1&\mbox{if }\alpha\beta<1.
	\end{array}\right.
	\end{equation}
Here and below, for two positive real-valued functions $f$ and $B$ defined on a set $D$,  
$f \asymp B$ means that there exist $c_{2,5},c_{2,6}>0$ such that
	$c_{2,5}B(x)\le f(x) \le c_{2,6}B(x)$ for all $x\in D$.
\end{lemma}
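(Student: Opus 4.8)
The plan is to reduce the two-sided integral to a one-sided model integral and then strip off the parameter $A$ by a scaling substitution, leaving a universal profile function whose three growth regimes produce the three cases. First I would split $\int_a^b = \int_a^{t_0} + \int_{t_0}^b$; on each piece $|t-t_0|$ equals $t_0-t$ or $t-t_0$, so after the obvious change of variable each piece becomes $\int_0^R (A+s^\alpha)^{-\beta}\ud s$ with $R=t_0-a$ and $R=b-t_0$ respectively, both lying in $[0,b-a]$.

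Next I would scale out $A$. Setting $s=A^{1/\alpha}w$ gives
\[
\int_0^R (A+s^\alpha)^{-\beta}\ud s = A^{1/\alpha-\beta}\,\Phi\!\big(RA^{-1/\alpha}\big), \qquad \Phi(x):=\int_0^x (1+w^\alpha)^{-\beta}\ud w,
\]
so the whole problem reduces to the asymptotics of the single profile $\Phi$. Since the integrand lies between $2^{-\beta}$ and $1$ on $[0,1]$ we get $\Phi(x)\asymp x$ there, while for $x\ge 1$ the bound $(1+w^\alpha)^{-\beta}\asymp w^{-\alpha\beta}$ reduces $\Phi$ to $\int_1^x w^{-\alpha\beta}\ud w$. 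This last integral is bounded when $\alpha\beta>1$, of order $\log x$ when $\alpha\beta=1$, and of order $x^{1-\alpha\beta}$ when $\alpha\beta<1$; combining with the small-$x$ behaviour yields $\Phi(x)\asymp 1$, $\Phi(x)\asymp\log(1+x)$, and $\Phi(x)\asymp x^{1-\alpha\beta}$ respectively, the middle estimate holding uniformly on all of $[0,\infty)$.

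Substituting these back into $A^{1/\alpha-\beta}\Phi(RA^{-1/\alpha})$ produces the three right-hand sides. When $\alpha\beta>1$ the factor $\Phi\asymp 1$ leaves $A^{-(\beta-1/\alpha)}$; when $\alpha\beta<1$ the powers of $A$ cancel, since $A^{1/\alpha-\beta}(RA^{-1/\alpha})^{1-\alpha\beta}=R^{1-\alpha\beta}\asymp 1$; and in the critical case $\beta=1/\alpha$ the prefactor is $1$, so the two pieces contribute $\Phi\big((b-t_0)A^{-1/\alpha}\big)+\Phi\big((t_0-a)A^{-1/\alpha}\big)\asymp\log\big(1+(b-t_0)A^{-1/\alpha}\big)+\log\big(1+(t_0-a)A^{-1/\alpha}\big)$, which is exactly the logarithm of the stated product.

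Finally I would assemble the two halves. The upper bound is immediate, as each half is at most the stated order. The delicate point, and the step I expect to be the main obstacle, is the lower bound: $t_0$ may sit at or near an endpoint, so one of the two halves can degenerate. I would handle this uniformly by observing that at least one of $t_0-a$ and $b-t_0$ is $\ge (b-a)/2$, so the corresponding half alone already furnishes the claimed lower bound; this observation also pins down the relevant range of the argument $RA^{-1/\alpha}$ on which the profile asymptotics for $\Phi$ are invoked.
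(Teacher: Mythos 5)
Your proposal is correct and takes a genuinely more self-contained route than the paper. The paper's entire proof consists of your first step plus a citation: it splits the integral via the change of variable $u=t-t_0$ into $\int_0^{t_0-a}(A+u^\alpha)^{-\beta}\ud u+\int_0^{b-t_0}(A+u^\alpha)^{-\beta}\ud u$ (their (\ref{lem::ele_int_result_3})) and then quotes the one-sided asymptotics (\ref{lem::ele_int_result_1}) from \cite[Lemma 8.6]{X09} for each piece. You perform the same splitting but then actually prove the one-sided estimate, via the scaling $s=A^{1/\alpha}w$ and the three-regime analysis of the profile $\Phi(x)=\int_0^x(1+w^\alpha)^{-\beta}\ud w$; that analysis, the back-substitution, and the log-of-product identity in the critical case are all correct. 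Moreover, your endpoint observation (one of $t_0-a$, $b-t_0$ is at least $(b-a)/2$, and that half alone carries the lower bound) is exactly the point the paper passes over silently when it applies (\ref{lem::ele_int_result_1}) with the variable, possibly vanishing, upper limits $t_0-a$ and $b-t_0$. What the paper's route buys is brevity; what yours buys is an elementary, transparent proof of the quoted lemma together with an explicit treatment of the endpoint degeneracy.

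One caveat that your scaling makes visible, and that you should state explicitly rather than leave implicit in the phrase ``pins down the relevant range of $RA^{-1/\alpha}$'': in the cases $\alpha\beta\neq 1$ your lower bound invokes the large-argument asymptotics of $\Phi$, which require $RA^{-1/\alpha}\gtrsim 1$, i.e.\ $A\lesssim (b-a)^\alpha$. This restriction cannot be removed, because the statement itself fails for large $A$: when $A^{1/\alpha}\gg b-a$ the integrand is $\asymp A^{-\beta}$ on all of $[a,b]$, so the integral is $\asymp (b-a)A^{-\beta}$, which is comparable neither to $A^{-(\beta-1/\alpha)}$ (case $\alpha\beta>1$) nor to $1$ (case $\alpha\beta<1$); only the critical case $\alpha\beta=1$ holds uniformly in $A>0$. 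So the lemma, read literally with $D=\{(A,t_0):A>0,\ t_0\in[a,b]\}$, is true only with constants depending on an upper bound for $A$. This defect lies in the statement and is inherited equally by the paper's citation-based proof; it is harmless downstream, since in the proof of Theorem \ref{thm:existence} the quantity playing the role of $A$ is $A_n(\overline u_n,v)\le N\max\{1,T-\epsilon\}$, which is bounded. With the restriction to bounded $A$ made explicit, your argument is complete and on the same footing as the paper's.
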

\begin{proof}
On one hand, from \cite[Lemma 8.6]{X09} we see that  for any given constants $\alpha>0, \beta\ge0,$
 \begin{equation}\footnotesize
    \label{lem::ele_int_result_1}
	\int_{0}^{b}(A+t^\alpha)^{-\beta}\ud t\asymp
	\left\{\begin{array}{lll}
	A^{-(\beta-1/\alpha)}&\mbox{if }\alpha\beta>1,\\
	\log(1+bA^{-1/\alpha})&\mbox{if }\alpha\beta=1,\\
	1&\mbox{if }\alpha\beta<1
	\end{array}\right.~~~~~\mbox{for all $A>0$.}
	\end{equation}

On the other hand, by the change of variable $u=t-t_0$, we obtain
\begin{equation}\footnotesize
\label{lem::ele_int_result_3}
\int_{a}^{b}(A+|t-t_0|^\alpha)^{-\beta}\ud t
=\int_{0}^{t_0-a}(A+u^\alpha)^{-\beta}\ud u+\int_{0}^{b-t_0}(A+u^\alpha)^{-\beta}\ud u.
\end{equation}
Since $t_0-a,~b-t_0\ge0$, applying (\ref{lem::ele_int_result_1}) to the right-hand side of  (\ref{lem::ele_int_result_3}) yields (\ref{lem::ele_int_result_2}).
\end{proof}
As our first main result, Theorem \ref{thm:existence} below  provides a \textit{sufficient and necessary condition} $\mathcal C$ for the existence of the local times of LMSS. The condition $\mathcal C$ significantly improves the sufficient conditions in \cite{MWX08} for multifractional Brownian sheets and in  \cite{shen2020local} for linear multifractional stable sheets with $0<\alpha<2$.
\begin{theorem}
\label{thm:existence}
Assume $\alpha\in(0,2]$. Let $X^{H(\bullet)}$ be an $(N,d)$-LMSS with Hurst functional index  $H(u)=\left(h_1(u),\right.$ 
$\left.\ldots,h_N(u)\right)$ and let $I=[\epsilon,T]^N$ with $0<\epsilon<T$. $X^{H(\bullet)}$ admits an $L^2(\lambda_d)$-integrable local time 
$L(\bullet,I)$ almost surely if and only if the following condition $\mathcal C$ holds:
\begin{equation}\footnotesize
\label{suf_nec_condition}
\mathcal C:\quad\quad d\le \inf_{v\in I}\sum_{l=1}^N\frac{1}{h_l(v)}~~\mbox{and}~~\int_{I}\bigg(\sum\limits_{l=1}^{N} \frac{1}{h_l(v)}-d\bigg)^{-1}\ud v<\infty.
\end{equation}
\end{theorem}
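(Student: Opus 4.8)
The plan is to pass from the statement about local times to a purely analytic integrability condition in three stages, following the road map announced before the theorem, and then to carry out the decisive analytic estimate by induction on $N$.

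First I would invoke the standard Fourier-analytic theory of square-integrable local times (Geman--Horowitz \cite{GH80}). Writing $Y=X^{H(\bullet)}$ and $\hat\mu_I(\xi)=\int_I e^{i\langle\xi,Y(t)\rangle}\ud t$, Plancherel's identity yields $\mathbb E\int_{\mathbb R^d}L(x,I)^2\,\ud x=(2\pi)^{-d}\int_{\mathbb R^d}\int_I\int_I\mathbb E\big[e^{i\langle\xi,\,Y(u)-Y(v)\rangle}\big]\ud u\,\ud v\,\ud\xi$, so $Y$ has an $L^2(\lambda_d)$ local time on $I$ almost surely iff the right-hand side is finite. Since the $d$ coordinates of $Y$ are i.i.d.\ copies of $X_0^{H(\bullet)}$, the increment characteristic function \eqref{characteristic_LMSS} together with the homogeneity $\|\xi_k f\|_\alpha^\alpha=|\xi_k|^\alpha\|f\|_\alpha^\alpha$ factorises as $\mathbb E[e^{i\langle\xi,Y(u)-Y(v)\rangle}]=\exp\big(-\sigma(u,v)^\alpha\sum_{k=1}^d|\xi_k|^\alpha\big)$, where $\sigma(u,v):=\|X_0^{H(u)}(u)-X_0^{H(v)}(v)\|_\alpha$. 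Performing the separable $\xi$-integral with the substitution $\eta_k=\sigma(u,v)\,\xi_k$ turns it into $C_\alpha^d\,\sigma(u,v)^{-d}$ with $C_\alpha=\int_{\mathbb R}e^{-|t|^\alpha}\ud t<\infty$; this is precisely the reduction to \eqref{proof_step1}.

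Next I would apply Lemma \ref{lem::alpha_X_upp_low_bound} with the admissible choice $\hat u=u$ (note $u_l\in[u_l\wedge v_l,u_l\vee v_l]$) to get $\sigma(u,v)\asymp\sum_{l=1}^N|u_l-v_l|^{h_l(u)}$ uniformly on $I\times I$, which makes \eqref{proof_step1} equivalent to the finiteness of the integral in \eqref{proof_step2}. The heart of the proof is then the analysis of the inner integral $J(u):=\int_I\big(\sum_{l=1}^N|u_l-v_l|^{h_l(u)}\big)^{-d}\ud v$ for fixed $u$, with the $h_l(u)$ treated as constants in $[m_l,M_l]$. I would prove, by induction on $N$ and peeling off one coordinate at a time through Lemma \ref{int_equiv}, the key estimate: with $s(u):=\sum_{l=1}^N 1/h_l(u)$, one has $J(u)=+\infty$ whenever $s(u)\le d$, and $J(u)\asymp(s(u)-d)^{-1}$ whenever $s(u)>d$, with comparison constants uniform over $u\in I$. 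In the inductive step one integrates $v_N$ first: the case $h_N(u)\,d>1$ reduces, via the branch $A^{-(\beta-1/\alpha)}$ of Lemma \ref{int_equiv}, to the $(N-1)$-dimensional integral with the lowered exponent $d-1/h_N(u)$, to which the induction hypothesis applies and returns $(s(u)-d)^{-1}$ (or $+\infty$ when $s(u)\le d$); in the remaining cases $h_N(u)\,d\le1$ the one-dimensional integral is $\asymp1$ (the equality branch producing a logarithm that is integrable in the surviving variables), and one checks that $s(u)-d$ is then bounded away from $0$, so $(s(u)-d)^{-1}\asymp1$ as well. Finally I would integrate the key estimate in $u$: since each $h_l$, hence $s$, is continuous by $\mathcal H_2$, the set $\{s<d\}$ is open, so $d>\inf_{v}s(v)$ forces $J\equiv+\infty$ on a set of positive measure and \eqref{proof_step2} to diverge; whereas if $d\le\inf_{v}s(v)$ then $\int_I J(u)\,\ud u\asymp\int_I(s(u)-d)^{-1}\ud u$ (with the convention $(s-d)^{-1}=+\infty$ on $\{s=d\}$), whose finiteness is exactly the two clauses of $\mathcal C$ in \eqref{suf_nec_condition}.

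The main obstacle I anticipate is the uniform control of constants in the inductive use of Lemma \ref{int_equiv}, and in particular isolating the exact blow-up rate $(s(u)-d)^{-1}$ rather than mere finiteness of $J(u)$. Two points need care. The ``$A$''-variable in Lemma \ref{int_equiv} must be kept in a bounded range---guaranteed here by $|u_l-v_l|\le T-\epsilon$---since the displayed asymptotics are uniform only for bounded $A$; and the degenerate regimes, namely the borderline $h_N(u)\,d=1$ and the reduced exponent $d-1/h_N(u)\downarrow0$ near $s(u)=d$, must be handled so that the comparison constants do not deteriorate. The precise rate $(s(u)-d)^{-1}$ itself emerges from the factor $(1-\gamma)^{-1}$ in the terminal one-dimensional integral $\int_\epsilon^T|u_1-v_1|^{-\gamma}\ud v_1$, where $1-\gamma\asymp h_1(u)\big(s(u)-d\big)$; extracting this factor uniformly in $u$ is what pins down the second clause of $\mathcal C$ and is the step that upgrades the earlier sufficient conditions to a sharp, necessary-and-sufficient criterion.
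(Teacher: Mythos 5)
Your proposal follows the paper's own road map: the same Geman--Horowitz $L^2$ Fourier criterion reducing existence to \eqref{proof_step1}, the same appeal to Lemma \ref{lem::alpha_X_upp_low_bound} to pass to \eqref{proof_step2}, and the same induction on the number of coordinates driven by Lemma \ref{int_equiv}. The one genuine difference is how the induction is organized. The paper's induction statement \eqref{condition_general2} is an equivalence for the fully integrated quantity $J_m=\int_I J_{m,\theta(v)}(v)\,\ud v$ with a function-valued exponent $\theta(\cdot)$, and its base case must resolve a topological dichotomy (whether $\{v:\theta(v)h_1(v)<1\}$ is dense in $I$) to detect divergence. You instead prove a pointwise two-sided estimate, $J(u)\asymp\big(s(u)-d\big)^{-1}$ when $s(u)>d$ and $J(u)=\infty$ when $s(u)\le d$, where $s(u):=\sum_{l=1}^N 1/h_l(u)$, with constants uniform in $u$, and integrate over $u$ only once at the end. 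This reorganization is sound and arguably cleaner: it isolates the exact blow-up rate, avoids the denseness argument entirely, and makes the endgame (reading off the two clauses of $\mathcal C$) immediate. Its price is the uniformity burden you yourself flag: the implied constants in Lemma \ref{int_equiv} deteriorate near the branch boundary (like $(\theta h_N-1)^{-1}$ in the supercritical branch and $(1-\theta h_N)^{-1}$ in the subcritical one), so naively multiplying constants along the induction does not give uniformity. What rescues the scheme, and what you should state explicitly, is that near the critical set $\{s=d\}$ --- the only region where $(s(u)-d)^{-1}$ is large --- one has $h_N(u)d\ge 1+m_N(N-1)-M_N\big(s(u)-d\big)$, so the supercritical branch condition holds with a fixed margin and the constants stay bounded there; on the complementary region $(s(u)-d)^{-1}\asymp 1$, and a crude bound absorbing the logarithm into a small power of $A_{N-1}$ suffices. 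Your sketch of the borderline and subcritical cases is consistent with this, so I regard the analytic core as correct.

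There is, however, one concrete gap. You invoke Lemma \ref{lem::alpha_X_upp_low_bound} ``uniformly on $I\times I$'' for an arbitrary cube $I=[\epsilon,T]^N$, but that lemma is stated and proved only for $|T-\epsilon|$ sufficiently small; for a general cube the two-sided comparability of $\big\|X_0^{H(u)}(u)-X_0^{H(v)}(v)\big\|_\alpha$ with $\sum_{l=1}^N|u_l-v_l|^{h_l(u)}$ is simply not available, so your second stage is unjustified as written. The paper closes this by first proving the theorem on small cubes and then partitioning an arbitrary $I$ into finitely many small subcubes $I_1,\ldots,I_P$, using Remark \ref{re:LT} (additivity $L(x,I)=\sum_{i}L(x,I_i)$, and restriction to Borel subsets) on the probabilistic side, together with the elementary observation that condition $\mathcal C$ holds on $I$ if and only if it holds on every $I_i$. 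Your argument needs this localization step too; it is routine, but without it the chain from \eqref{proof_step1} to \eqref{proof_step2} breaks for large $T-\epsilon$.
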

\begin{proof}  By Remark \ref{re:LT}, we first assume that $|T-\epsilon|$ is \textit{sufficiently small} so that 
Lemma \ref{lem::alpha_X_upp_low_bound} is applicable.

Denote by $\mu_I$ the occupation measure of $X^{H(\bullet)}$ on $I$ (see (\ref{occupation})) .
The Fourier transform of $\mu_I$ is
\begin{equation}\footnotesize 
\label{u_Fourier}
\hat{\mu}_I (\xi) = \int_I e^{i\langle\xi,X^{H(u)}(u)\rangle} \ud u.
\end{equation}
\normalsize
Define 
\begin{equation}\footnotesize
\label{def:J_I}
\mathcal{J}(I):=\mathbb{E} \int_{\mathbb{R}^d} |\hat{\mu}_I(\xi)|^2 \ud\xi.
\end{equation}
Plugging (\ref{u_Fourier}) into (\ref{def:J_I}) and applying Fubini's theorem, we get
\begin{equation}\footnotesize
\label{J_I}
\mathcal{J}(I) =\int_I \int_I \int_{\mathbb{R}^d}
\mathbb{E} \exp(i \langle\xi, X^{H(u)}(u) - X^{H(v)}(v)\rangle) \ud\xi \ud u \ud v.
\end{equation}
According to \cite[Theorem 21.9]{GH80}, Theorem \ref{thm:existence} is equivalent to: 
$\mathcal{J}(I)<\infty$ if and only if (\ref{suf_nec_condition}) holds. It follows from (\ref{J_I}), (\ref{characteristic}) 
and the following equation: for any constants  $a>0,b\ge0$, and $A>0$,
\begin{equation}\footnotesize
\label{int_exp_simpl}
    \int_{-\infty}^{+\infty}|x|^be^{-|x|^a A}\ud x =\frac{2}{a} \Gamma\Big(\frac{1+b}{a}\Big)A^{-(1+b)/a},
\end{equation}
that
\begin{equation}\footnotesize
\label{step_1}
\mathcal{J}(I) = 2^d\Big(\Gamma\Big(\frac{1}{\alpha}+1\Big)\Big)^d\int_I\int_I \big\| X_1^{H(u)}(u) - X_1^{H(v)}(v) \big\|_{\alpha}^{-d}\ud u\ud v.
\end{equation}
Since $|T-\epsilon|$ is sufficiently small, by Lemma \ref{lem::alpha_X_upp_low_bound}, there exist constants $c_{2,1}$, $c_{2,2}>0$ such that
 for every $u,\,v\in I$ with $u\ne v$,
\begin{equation}\footnotesize
    \label{step_2}
    \Big(c_{2,2} \sum_{l=1}^N |u_l-v_l|^{ h_l(v)} \Big)^{-d} \leq \big\| X_1^{H(u)}(u) - X_1^{H(v)}(v) \big\|_{\alpha}^{-d}
    \leq  \Big(c_{2,1} \sum_{l=1}^N |u_l - v_l|^{ h_l(v)} \Big)^{-d}.
\end{equation}
It follows from (\ref{step_1}) and (\ref{step_2}) that
\footnotesize
\begin{equation*}
c_{2,7}
 \int_I\int_I\Big(\sum_{l=1}^N |u_l-v_l|^{ h_l(v)}\Big)^{-d}\ud u\ud v \leq \mathcal J(I) \leq
c_{2,8} \int_I\int_I\Big(\sum_{l=1}^N |u_l-v_l|^{ h_l(v)}\Big)^{-d}\ud u\ud v,
\end{equation*}
\normalsize
where $
c_{2,7}
=(2\Gamma(1/\alpha+1)c_{2,2}^{-1})^d~\mbox{and} ~c_{2,8}
=(2\Gamma(1/\alpha+1)c_{2,1}^{-1})^d.
$ 
Therefore to prove the theorem it suffices to verify
\begin{equation}\footnotesize
\label{step_4}
\int_I\int_I\Big(\sum_{l=1}^N |u_l-v_l|^{ h_l(v)}\Big)^{-d}\ud v\ud u<\infty~\mbox{if and only if (\ref{suf_nec_condition}) holds.}
\end{equation}
To this end, we will prove a more general result: for any function $\theta(\bullet)$ continuous over $I$,
\begin{equation}\footnotesize
\label{step_general}
\begin{split}
\int_I\int_I\Big(\sum_{l=1}^N |u_l-v_l|^{ h_l(v)}\Big)^{-\theta(v)}\ud v\ud u<\infty \ \Longleftrightarrow \
 &\theta(v)\le \sum_{l=1}^N\frac{1}{h_l(v)}~\mbox{for}~v\in I\\
 &~~\mbox{and}~~\int_{I}\Big(\sum\limits_{l=1}^{N} \frac{1}{h_l(v)}-\theta(v)\Big)^{-1}\ud v<\infty.
 \end{split}
\end{equation}
Before we prove the aforementioned claim, let us fix some notations. For $m=1,\ldots,N$, denote by
\footnotesize
$$
I_m:=[\epsilon,T]^m,~ \quad \overline u_m:=(u_1,\ldots,u_m).
$$
\normalsize
Notice that $I_N = I$. For $\overline u_m\in I_m$, $v\in I_N$, let
\footnotesize
\begin{equation*}
A_m(\overline u_m,v):=\sum_{l=1}^m |u_l-v_l|^{ h_l(v)},~ J_{m,\theta(v)}(v):=\int_{I_m}
\left(A_m(\overline u_m,v)\right)^{-\theta(v)}\ud \overline u_m~\mbox{and}~J_m:=\int_{I}J_{m,\theta(v)}(v)\ud v.
\end{equation*}
\normalsize
Then the left-hand side integral in \eqref{step_general} is $J_N$.

We now prove the following statement by using induction: for any $m = 1, \ldots, N$,
\begin{equation}\footnotesize
\label{condition_general2}  
J_m<\infty\ \ \Longleftrightarrow \ \  \theta(v)\le \sum_{l=1}^m\frac{1}{h_l(v)}~\mbox{for}~v\in I~~\mbox{and}~~
\int_{I}\bigg(\sum\limits_{l=1}^{m} \frac{1}{h_l(v)}-\theta(v)\bigg)^{-1}\ud v<\infty.
\end{equation}
Since the integral $\int_{\{v: \theta (v)\le 0\}}J_{m,\theta(v)}(v)\ud v \le c_{2,9}< \infty$ for $m = 1, \ldots, N$, the set
$\{v: \theta (v)$ $\le 0\}$ does not affect the statement. \\

\noindent\textbf{Step 1:} Consider first the case $m=1$.\\ 
Notice that in order for $J_1 < \infty$, we necessarily have $\theta(v)\le 1/h_1(v)$ for all $v\in I$. This is because
if $\theta(v)>1/h_1(v)$ for some $v\in I$, then by the continuity of $h_1$ there exists a vector $\delta \in (0, \infty)^N$ 
with equal-valued coordinates such that $\theta(u)h_1(u)>1$ for all $u\in I\cap[v-\delta,v+\delta]$. As a result, 
\footnotesize
$$
J_1\ge \int_{I\cap[v-\delta,v+\delta]}J_{1,\theta(u)}(u)\ud u=\int_{I\cap[v-\delta,v+\delta]}\int_\epsilon^T|\varepsilon-u_1|^{-\theta(u)h_1(u)}\ud \varepsilon\ud u=\infty,
$$
\normalsize
where $u_1$ denotes the first coordinate of $u$. Hence we may assume $\theta(v)\le 1/h_1(v)$ for all $v\in I$. Subject to this constraint, we can write $I = \mathcal V \cup \mathcal V_0$,
with 
\footnotesize 
\begin{equation*}
\mathcal V := \left\{v\in I:~\theta(v)h_1(v)<1\right\}~\mbox{ and }~\mathcal V_0 := \left\{v\in I:~\theta(v)h_1(v)=1\right\}.
\end{equation*}\normalsize
Then two cases follow.
\begin{description}
\item[Case 1:] $\mathcal V$ is dense in $I$, i.e., $\overline{\mathcal V}=I$. 
\end{description}
Since the Lebesgue measures of the open sets $\mathcal V$ and  $I$ are equal, we have
\begin{equation}\footnotesize\label{dense}
\int_{I}J_{1,\theta(v)}(v)\ud v=\int_{\mathcal V}J_{1,\theta(v)}(v)\ud v~\mbox{and}~\int_{I}\Big(\frac{1}{h_1(v)}-\theta(v)\Big)^{-1}\ud v=\int_{\mathcal V}\Big(\frac{1}{h_1(v)}-\theta(v)\Big)^{-1}\ud v.
\end{equation}
For all $v\in \mathcal V$, we can write
\begin{equation}\footnotesize
\label{int_J_v}
J_{1,\theta(v)}(v)=\frac{(v_1-\epsilon)^{1-\theta(v)h_1(v)}+(T-v_1)^{1-\theta(v)h_1(v)}}{1-\theta(v)h_1(v)}.
\end{equation}
Using (\ref{int_J_v}) and the fact that $h_1(v)\in(m_1,M_1)$ for all $v\in \mathcal V$, there exist $c_{2,10},\,c_{2,11}>0$ such that 
for all $v\in \mathcal V$,
\begin{equation}\footnotesize
\label{ineq:bound_J_1d}
c_{2,10}\Big(\frac{1}{h_1(v)}-\theta(v)\Big)^{-1}\le J_{1,\theta(v)}(v)\le c_{2,11}\Big(\frac{1}{h_1(v)}-\theta(v)\Big)^{-1}.
\end{equation}
It follows from (\ref{dense}) and (\ref{ineq:bound_J_1d}) that 
\footnotesize
$$
c_{2,10}\int_{I}\Big(\frac{1}{h_1(v)}-\theta(v)\Big)^{-1}\ud v\le \int_{I}J_{1,\theta(v)}(v)\ud v\le c_{2,11}\int_{I}\Big(\frac{1}{h_1(v)}-\theta(v)\Big)^{-1}\ud v.
$$
\normalsize
Therefore in Case 1, 
\footnotesize
$$
J_1<\infty\ \ \Longleftrightarrow \ \ \theta(v)\le \frac{1}{h_1(v)}~\mbox{for}~v\in I~\mbox{and}~\int_{I}\Big(\frac{1}{h_1(v)}-\theta(v)\Big)^{-1}\ud v<\infty.
$$
\normalsize
\begin{description}
\item[Case 2:] $\mathcal V$ is not dense in $I$.
\end{description}
In this case, $\mathcal V_0$ is a closed and non-empty set. Let us first show the interior $\mathring{\mathcal V}_0\neq\emptyset$. 
If $\mathring{\mathcal V}_0=\emptyset$,  then for every $v_0\in \mathcal V_0$, there is a sequence 
$\{v_k\}_{k\ge1}$ in $\mathcal V$ such that $v_k\to v_0$ as $k\to\infty$. Therefore $v_0\in \overline{\mathcal V}$. 
This implies that $\mathcal V_0\subseteq \overline{\mathcal V}$ and $\mathcal V$ is dense in $I$, which is a contradiction. 

Now, since $\mathring{\mathcal V}_0\neq\emptyset$, there exist $v_0\in\mathcal V_0$ and $\delta \in (0, \infty)^N$ with 
equal-valued coordinates such that $I\cap [v_0-\delta,v_0+\delta]\subseteq \mathcal V_0$. Consequently,
\footnotesize$$
\int_{I}J_{1,\theta(v)}(v)\ud v\ge\int_{I\cap[v_0-\delta,v_0+\delta]}\int_\epsilon^T|\varepsilon-v_{1}|^{-\theta(v)h_1(v)}\ud \varepsilon\ud v=\infty.
$$\normalsize
Therefore $J_1 = \infty$ in Case 2.

Combining Cases 1 and 2, we obtain that $J_{1}<\infty$ implies
\begin{equation}\footnotesize
\label{finite_int_d}
 \theta(v)\le \frac{1}{h_1(v)}~\mbox{ for}~ v\in I~\mbox{and}~ \int_{I}\Big(\frac{1}{h_1(v)}-\theta(v)\Big)^{-1}\ud v<\infty
\end{equation}
and $\mathcal V$ is dense in $I$. 
This proves the necessity part. In the other direction, we see that by using similar argument in Case 2, (\ref{finite_int_d}) 
implies that $\mathcal V$ should be dense in $I$. Then it follows from Case 1 that $J_1 < \infty$.  
Therefore, we have shown that $J_{1}<\infty$ if and only if  (\ref{finite_int_d}) is satisfied.

\noindent\textbf{Step 2:} Assume that for some $n\in\{1,\ldots,N-1\}$,
\footnotesize
\begin{equation}
\label{induction_assump}
J_n=\int_{I}J_{n,\theta(v)}(v)\ud v<\infty~\Longleftrightarrow ~\theta(v)\le\sum_{l=1}^n\frac{1}{h_l(v)}~\mbox{for}~v\in I~~\mbox{and}~~ 
\int_{I}\bigg(\sum_{l=1}^n\frac{1}{h_l(v)}-\theta(v)\bigg)^{-1}\ud v<\infty.
\end{equation}
\normalsize
Now we consider $J_{n+1}$. By applying (\ref{lem::ele_int_result_2}), we have for any $v\in I$,
\begin{equation}\footnotesize
\label{J:n+1}
\begin{split}
&J_{n+1,\theta(v)}(v)=\int_{I_{n+1}}\left(A_{n+1}(\overline u_{n+1},v)\right)^{-\theta(v)}\ud \overline u_{n+1} \\
&=\int_{I_n}\bigg\{\int_\epsilon^T\big(A_n(\overline u_n,v)+|u_{n+1}-v_{n+1}|^{ h_{n+1}(v)}\big)^{-\theta(v)}\ud u_{n+1}\bigg\}\ud \overline u_n\  \\
&\asymp
	\left\{\begin{array}{llll}
	\int_{I_n} \big(A_n(\overline u_n,v)\big)^{-(\theta(v)-{1}/{h_{n+1}(v)})}\ud \overline u_{n}&\mbox{if }\theta(v)h_{n+1}(v)>1,\\
	\int_{I_n}\log\left((1+(T-v_{n+1})\left(A_n(\overline u_n,v)\right)^{-1/h_{n+1}(v)})\right.\\
\left.~~\times(1+(v_{n+1}-\epsilon)\left(A_n(\overline u_n,v)\right)^{-1/h_{n+1}(v)})\right)\ud \overline u_{n}&\mbox{if }\theta(v)h_{n+1}(v)=1,\\
	\int_{I_n}1\ud \overline u_{n}&\mbox{if }\theta(v)h_{n+1}(v)<1,\\
	\end{array}\right. \\
	&=\left\{\begin{array}{lll}
	J_{n,\theta(v)-1/h_{n+1}(v)}(v)&\mbox{if }\theta(v)h_{n+1}(v)>1,\\
	\int_{I_n}\log\left((1+(T-v_{n+1})\left(A_n(\overline u_n,v)\right)^{-1/h_{n+1}(v)})\right.\\
\left.~~\times(1+(v_{n+1}-\epsilon)\left(A_n(\overline u_n,v)\right)^{-1/h_{n+1}(v)})\right)\ud \overline u_{n}&\mbox{if }\theta(v)h_{n+1}(v)=1,\\
	(T-\epsilon)^n&\mbox{if }\theta(v)h_{n+1}(v)<1.
	\end{array}\right. 
\end{split}
\end{equation}
From  (\ref{J:n+1}) we see that 
 $
\int_{\{\theta(v)h_{n+1}(v) \le 1\}} J_{n+1,\theta(v)}(v) \ud v<\infty.
$ 
Hence $J_{n+1}<\infty$ if and only if  
\begin{equation}\footnotesize
\label{condition_1}
\int_{\{\theta(v)h_{n+1}(v) > 1\}} J_{n+1,\theta(v)}(v) \ud v<\infty.
\end{equation}
By (\ref{J:n+1}) and the remark below \eqref{condition_general2}, we see that \eqref{condition_1} is equivalent to 
\begin{equation}\footnotesize
\label{condition_1b}
\int_{I}   J_{n,\theta(v)-1/h_{n+1}(v)}(v)\ud v<\infty.
\end{equation}
Replacing $\theta(v)$ in the induction hypothesis (\ref{induction_assump}) with $\theta(v)-1/h_{n+1}(v)$ yields that, \eqref{condition_1b} holds if and only if
\footnotesize
\begin{equation*}
\theta(v)-\frac{1}{h_{n+1}(v)}\le\sum_{l=1}^n\frac{1}{h_l(v)}~\mbox{for}~v\in I~~\mbox{and}~~\int_{I}\bigg(\sum_{l=1}^{n}\frac{1}{h_l(v)}-\Big(\theta(v)-\frac{1}{h_{n+1}(v)}\Big)\bigg)^{-1}\ud v<\infty.
\end{equation*}
\normalsize
We conclude that  (\ref{condition_general2}) and thus (\ref{step_general}) are proved.  Taking $\theta(\bullet)\equiv d$ 
in (\ref{step_general}) yields (\ref{step_4}). Theorem \ref{thm:existence} is proved for $|T-\epsilon|>0$ being sufficiently small.

Finally we consider an arbitrary $I=[\epsilon,T]^N$ and let $I_1,\ldots,I_P$ be an arbitrary partition (rectangles) of $I$ such that the size of each $I_i$ is sufficiently small.  According to Remark \ref{re:LT} and the fact that
\footnotesize
$$
d\le \inf_{v\in I_i}\sum_{l=1}^N\frac{1}{h_l(v)}~~\mbox{and}~~\int_{I_i}\bigg(\sum_{l=1}^{N}\frac{1}{h_l(v)}-d\bigg)^{-1}\ud v<\infty,~\mbox{for all}~i=1,\ldots,P,
$$\normalsize
is equivalent to
\footnotesize
$$
d\le \inf_{v\in I}\sum_{l=1}^N\frac{1}{h_l(v)}~~\mbox{and}~~\int_{I}\bigg(\sum_{l=1}^{N}\frac{1}{h_l(v)}-d\bigg)^{-1}\ud v<\infty.
$$\normalsize
Hence, Theorem \ref{thm:existence} holds for arbitrary $I$. The proof is complete.
\end{proof}

\begin{remark}
\label{remark_condition}
It is easy to see that $\mathcal C$ is equivalent to either the following condition  $\mathcal C_1$ or  $\mathcal C_2$ holds:
\begin{description}
\item[$\mathcal C_1$: ]
$
d<\inf\limits_{v\in I}\sum\limits_{l=1}^{N} \frac{1}{h_l(v)}.$
\item[$\mathcal C_2$: ] $
d=\inf\limits_{v\in I}\sum\limits_{l=1}^{N} \frac{1}{h_l(v)}$ and $\int_{I}\big(\sum\limits_{l=1}^{N} \frac{1}{h_l(v)}-d\big)^{-1}\ud v<\infty$.
\end{description}
\end{remark}
The integral constraint in $\mathcal C_2$ is some requirement on the convergence rate for the function $v\mapsto \sum_{l=1}^N1/h_l(v)$ to approach its infimum on $I$. It requires the function $v\mapsto \sum_{l=1}^N1/h_l(v)$ to be \enquote{rough enough} around 
its minimizers in $I$. We can see that linear fractional stable sheets do not satisfy $\mathcal C_2$. As a result our  Theorem \ref{thm:existence} includes  \cite[Theorem 2.2]{ARX07b} as a particular case. In Table \ref{table1} below we compare our results to the literature ones in more detail. From the table we see that our Theorem \ref{thm:existence} improves the sufficient conditions  in \cite[Corollary 3.2]{MWX08} for multifractional Brownian sheets and in \cite[Theorem 3.1]{shen2020local} for linear multifractional stable sheets ($\alpha<2$).
\begin{table}[H]
    \centering
    \caption{Summary of conditions for the existence of local times of $(N,d)$-LMSS.}
    \scalebox{0.80}{
    \begin{tabular}{lllll}
    \hline
    Reference & $(N,d)$-LMSS Type& $\alpha$ & Condition Type & Condition\\
    \hline
     Xiao and Zhang (2002) \cite[Theorem 3.6]{Xiao2002}&Fractional Brownian sheets& $2$ &Sufficient & $\mathcal C_1$\\
    Ayache, Roueff and Xiao (2007) \cite[Theorem 2.2]{ARX07b}&Linear fractional stable sheets&$(0,2]$& Suff.~$\&$~nec.   &$\mathcal C_1$\\
    Meerschaert, Wu and  Xiao (2008)  \cite[Corollary 3.2]{MWX08} & Multifractional Brownian sheets & $2$& Sufficient& $\mathcal C_1$\\
    Shen, Yu and Li (2020) \cite[Theorem 3.1]{shen2020local}&Linear multifractional stable sheets&$(0,2)$&Sufficient&$d< \sum_{l=1}^N1/\sup_{v\in I}h_l(v)$\\
    Ding, Peng and Xiao (2022), Theorem \ref{thm:existence}&LMSS&$(0,2]$&Suff.~$\&$~nec. & $\mathcal C_1$ or $\mathcal C_2$\\
    \hline
    \end{tabular}}
     \label{table1}
\end{table}
Below we provide a simple example to illustrate how the conditions for the existence of local times to be derived. Consider an $(N,d)$-LMSS with
\footnotesize
$$
\alpha\in(0,2],~N=1~ \mbox{and}~
h_1(v)=\frac{1}{m}-(v-q)^k~ \mbox{for}~ v\in I=\Big[q,\frac{1}{m}\Big],
$$\normalsize
where the integer $m\ge 2$ and  the real numbers $q\ge0,~k>0$ are chosen to satisfy $h_1(1/m)>0$.

If $k\in(0,1)$, from \cite[Corollary 3.2]{MWX08} and \cite[Theorem 3.1]{shen2020local} we know that 
$
d<\inf_{v\in I}{1}/{h_1(v)}=m
$ 
is a sufficient condition for the existence of local times on $I$. As an improvement, Theorem \ref{thm:existence} yields 
that  $d\le m$ is  a sufficient and necessary condition, because in this case, either $\mathcal C_1$ or $\mathcal C_2$ 
is satisfied: we have either $d<m$ or $d=m$ with 
\footnotesize
$$
\int_I\left(\frac{1}{h_1(v)}-d\right)^{-1}\ud v=\int_{q}^{1/m}\left(\frac{1}{h_1(v)}-\frac{1}{h_1(q)}\right)^{-1}\ud v
=\frac{1}{m}\int_{q}^{1/m}\Big(\frac{1}{m(v-q)^k}-1\Big)\ud v<\infty.
$$
\normalsize
If $k\ge1$, it is easy to see that $\mathcal C_2$ can not hold, therefore by Theorem \ref{thm:existence}, the sufficient and 
necessary condition becomes $\mathcal C_1:$ $d< m$.

\section{Joint continuity of the local times}
\label{sec:local_times_continuity}
In this section we obtain that the assumption $\mathcal C_1$ in Remark \ref{remark_condition} is also 
a sufficient condition for the joint continuity of the local times of $(N,d)$-LMSS, which is significantly weaker than the ones 
in  \cite[Theorem 3.4]{MWX08} and \cite[Theorem 3.2]{shen2020local} for multifractional Brownian sheets  
and linear multifractional stable sheets, respectively. The main result is stated below.
\begin{theorem}
\label{thm::joint_con}
Assume $\alpha\in(0,2]$. Let $X^{H(\bullet)}$ be an $(N, d)$-LMSS. It has a jointly continuous local time on $I:=[\epsilon,T]^N$, 
provided $\mathcal C_1:$ $d < \inf_{v\in I}\sum_{l=1}^N {1}/{h_l(v)}$.
\end{theorem}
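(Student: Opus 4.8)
The plan is to establish two families of moment estimates for the local times — one controlling increments in the space variable $x\in\mathbb{R}^d$, the other controlling increments in the set (time) variable — and then to feed them into a multiparameter continuity criterion of Garsia--Rodemich--Rumsey / Kolmogorov type. As in the proof of Theorem \ref{thm:existence}, I would first invoke Remark \ref{re:LT} to reduce to the case where $I=[\epsilon,T]^N$ has sufficiently small diameter, so that Lemma \ref{lem::alpha_X_upp_low_bound} is applicable; joint continuity on a general $I$ then follows by patching the jointly continuous versions obtained on the pieces of a finite partition and using the additivity of local times.

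The starting point is the Fourier representation of the local time. For a subrectangle $J\subseteq I$ one writes, following \cite{GH80},
\[
L(x,J)=(2\pi)^{-d}\int_{\mathbb{R}^d}\int_J e^{-i\langle\xi,x\rangle}\,e^{i\langle\xi,X^{H(u)}(u)\rangle}\,du\,d\xi,
\]
so that the $2n$-th moment of an increment $L(x+z,J)-L(x,J)$ becomes an integral over $(u^1,\ldots,u^{2n})\in J^{2n}$ and $(\xi^1,\ldots,\xi^{2n})\in\mathbb{R}^{2nd}$. Because $X_1^{H(\bullet)},\ldots,X_d^{H(\bullet)}$ are independent copies of $X_0^{H(\bullet)}$, the joint characteristic function (\ref{characteristic_LMSS}) factorizes across the $d$ coordinates, and carrying out the $\xi$-integration produces a product of $d$ factors each governed by the $L^\alpha$-(quasi)norm $\big\|\sum_j\xi^j_k X_0^{H(u^j)}(u^j)\big\|_\alpha^\alpha$. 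The aim is to bound the resulting quantity by $C^n (n!)^{\eta}\,|z|^{2n\gamma}\,(\lambda_N(J))^{2n\beta}$ for suitable $\eta\ge 0$, $\gamma>0$ and $\beta>0$, with constants uniform in $x$, $z$ and $J$.

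The heart of the matter, and the step I expect to be the main obstacle, is a sharp \emph{lower} bound for $\big\|\sum_{j=1}^{n}\xi^j X_0^{H(u^j)}(u^j)\big\|_\alpha^\alpha$. In the Gaussian case $\alpha=2$ this is exactly where strong local nondeterminism is used (as in \cite{MWX08}), and \cite{shen2020local} borrows the machinery of \cite{X11}, which is restricted to $\alpha\in(1,2)$. To cover the full range $\alpha\in(0,2]$ I would instead extend the \emph{direct} approach of \cite{ARX07a}: order the time points coordinate by coordinate and exploit the product form of the kernel $g^{H(\bullet)}$ in (\ref{eqn::g_u_v}) together with the disjointness of the increments of the underlying moving-average representation. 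After freezing the Hurst exponents at a common point via Lemma \ref{lem::alpha_X_upp_low_bound} (which replaces each multifractional increment by one comparable to $\sum_l|u^j_l-u^{j-1}_l|^{h_l(\hat u)}$), this should yield a lower bound of the form $c\sum_j|\xi^j|^\alpha\prod_l(\text{consecutive gap in coordinate }l)^{\alpha h_l}$, after which the $\xi$-integration and the iterated $u$-integration are handled by Lemma \ref{int_equiv}. The strict inequality in $\mathcal{C}_1$ guarantees that every resulting integral converges and, crucially, leaves a strictly positive margin that supplies the positive Hölder exponent $\gamma$ in the space variable and the power $\beta$ in the set variable; the assumptions $\mathcal{H}_1$ and $\mathcal{H}_2$ ensure that all constants are uniform over $I$ despite the varying index.

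Finally, with the two families of moment estimates in hand, I would apply a multiparameter Kolmogorov-type continuity theorem (in the spirit of \cite{GH80} and \cite{ARX07b}) to obtain an almost surely jointly continuous version of $(x,t)\mapsto L(x,[a,t])$ on $\mathbb{R}^d\times I$, and then assemble the global statement through the partition argument. The delicate points are tracking the factorial growth $(n!)^{\eta}$ so that the moment bounds remain summable in the chaining step, and verifying that the direct lower bound is genuinely uniform in the multifractional setting; once these are controlled, the criterion applies verbatim for every $\alpha\in(0,2]$, which is precisely the improvement over \cite{shen2020local,X11}.
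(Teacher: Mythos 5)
Your high-level architecture --- the Fourier representation of the moments, an extension of the direct approach of \cite{ARX07a} exploiting disjointness in the moving-average representation, two families of moment estimates, and a multiparameter Kolmogorov argument with patching via Remark \ref{re:LT} --- is the same as the paper's. The gap is in the one step you yourself flag as the heart of the matter: the lower bound for $\big\|\sum_{j=1}^n\xi^jX_0^{H(u^j)}(u^j)\big\|_\alpha^\alpha$. You propose a bound of the form
\[
\Big\|\sum_{j=1}^n\xi^jX_0^{H(u^j)}(u^j)\Big\|_\alpha^\alpha\ \ge\ c\sum_{j=1}^n|\xi^j|^\alpha\prod_{l=1}^N\big(g_{l,j}\big)^{\alpha h_l},
\]
where $g_{l,j}$ is the $j$-th consecutive gap among $u_l^1,\ldots,u_l^n$, and this is wrong in structure: the product over the coordinates $l$ sits \emph{inside} the sum over $j$. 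With such a bound, the $\xi$-integration via (\ref{int_exp_simpl}) produces $\prod_{j}\prod_{l}(g_{l,j})^{-dh_l}$, the iterated $u$-integration over the ordered simplices then factorizes over $l$, and each factor is finite only if $dh_l<1$. So your route requires $d<\inf_{v\in I}\big(1/\max_l h_l(v)\big)$, which for $N\ge2$ is strictly stronger than $\mathcal C_1$: $d<\inf_{v\in I}\sum_{l=1}^N1/h_l(v)$; for instance with $N=2$ and $h_1\equiv h_2\equiv1/2$, $\mathcal C_1$ allows $d=3$ while your bound only reaches $d=1$. The paper avoids this by keeping the sum over coordinates \emph{outside} the quasinorm, $\big\|\sum_j\xi_jX_0^{H(u^j)}(u^j)\big\|_\alpha^\alpha\ge\sum_{l=1}^N\big\|\sum_j\xi_jZ_l(u^j)\big\|_\alpha^\alpha$ (the decomposition (\ref{def:Z}) and inequality (\ref{lem::inequ_X_Y_Z})), and then splitting $e^{-\sum_l\|\cdot\|_\alpha^\alpha}$ by the generalized H\"older inequality with exponents $p_l$ chosen by Lemma \ref{rho_p_q_relation} so that $dh_l(u^*)/p_l<1$; it is precisely this distribution of $d$ across the coordinates that makes the weak condition $\mathcal C_1$ sufficient.

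There is a second, related problem: even within a single coordinate, the direct approach does not produce a diagonal bound of the form $\sum_j|\xi_j|^\alpha(\cdot)$. What it produces (Appendix \ref{proof_Lemma}) is a triangular system $\sum_i\big|\sum_{j\ge i}\theta_{i,j}\xi_j\big|^\alpha$ with $\theta_{i,i}\asymp(u_l^i-u_l^{i-1})^{h_l(u^i)}$; a diagonal lower bound of the kind you assert is essentially a local-nondeterminism statement, which is exactly what is unavailable for $\alpha\le1$ and what the direct method is designed to circumvent. The $\xi$-integration therefore has to be performed by the change-of-variables/inverse-matrix computation of Lemma \ref{int_exp}, where the product of the diagonal entries enters as a Jacobian; this is the content of Lemma \ref{lem::bound_increm}, whose weighted version (with the factors $|v_k^j|^{b_j}$) is also what the space-increment estimate of Lemma \ref{lem::E_L_bound} needs. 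Two smaller slips point the same way: Lemma \ref{lem::alpha_X_upp_low_bound} is a two-point estimate and cannot ``freeze'' the Hurst exponents inside an $n$-point linear combination --- the paper freezes them only \emph{after} integration, through $\mathcal H_2$ and the correction $c_0(\delta)$ in (\ref{up_bound_u_pi_by_u_star}); and the ordered $u$-integration is handled by the simplex (beta-integral) lemma \cite[Lemma 2.11]{MWX08}, not by Lemma \ref{int_equiv}, which serves only the existence theorem. Your concluding Kolmogorov step and the patching argument are fine once the correct moment estimates (Lemmas \ref{lem::E_L} and \ref{lem::E_L_bound}) are in hand.
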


The proof of Theorem \ref{thm::joint_con} will be based on an multiparameter version of Kolmogorov's continuity theorem 
(cf. \cite{K02}) and estimates on the higher-order moments of the local times of $X^{H(\bullet)}$ (see Lemmas \ref{lem::E_L} and  
\ref{lem::E_L_bound}). The proofs of Lemmas \ref{lem::E_L} and  \ref{lem::E_L_bound} are technical, as they require a careful control 
of the upper and lower bounds for the weighted sum of the elements in $X_0^{H(\bullet)}$ in the $L^\alpha$-(quasi)norm. Similarly to
\cite{MWX08,X11,shen2020local}, we decompose $X_0^{H(\bullet)}$ into sum of independent multifractional sheets $Y_1$, $Y_2$, 
and $Z_l$, $l=1,\ldots,N$  (see (\ref{def:Z})) and control their bounds separately. The new idea in this paper is that, instead of using 
the property of local nondeterminism,  we extend the direct approach in  \cite{ARX07a} for linear fractional stable sheets to LMSS 
which allows us to derive more precise information on the upper bound for the moments of local times than those in 
\cite{MWX08, X11,shen2020local}.

Denote by
\begin{equation}\footnotesize
\label{def:g_j}
g_l(u_l, v_l) := c_{H(1)}^{1/N}\Big((u_l-v_l)_{+}^{h_l(u)-1/\alpha} - (-v_l)_{+}^{h_l(u)-1/\alpha}\Big).
\end{equation}
In terms of (\ref{eqn::def_lmss}) and (\ref{def:g_j}), for any $u\in\mathbb{R}_{+}^{N}$, we can write
\footnotesize$$
X_0^{H(u)}(u) = \int_{(-\infty,u]\backslash [0,u]} \prod_{l=1}^N g_l(u_l, v_l) M_{\alpha}(\ud v) + Y^{H(u)}(u),
$$\normalsize
where $
Y^{H(u)}(u): = \int_{[0,u]} \prod_{l=1}^N g_l(u_l, v_l) M_{\alpha}(\ud v).$ 
For any $u=(u_1,\ldots,u_N)\in[\epsilon,T]^N$ and $l=1,\ldots,N$, denote by $
R_l^1(u):=[0,\epsilon]~\mbox{and}~R_l^2(u):=(\epsilon,u_l]$. Hence the rectangle $[0,u]$ can be decomposed into 
the  union of disjoint sub-rectangles:
\footnotesize 
\begin{equation*}
\begin{split}
[0,u]&=\Bigg(\bigcup_{\substack{i_1,\ldots,i_N\in\{1,2\} \\ i_1+\ldots+i_N=N}} \prod_{l'=1}^N R_{l'}^{i_{l'}}(u)\Bigg)
\cup \Bigg(\bigcup_{\substack{i_1,\ldots,i_N\in\{1,2\} \\ i_1+\ldots+i_N=N+1}} \prod_{l'=1}^NR_{l'}^{i_{l'}}(u)\Bigg)
\cup \Bigg(\bigcup_{\substack{i_1,\ldots,i_N\in\{1,2\} \\ i_1+\ldots+i_N\ge N+2}} \prod_{l'=1}^NR_{l'}^{i_{l'}}(u)\Bigg)\\
&=[0,\epsilon]^N \cup \bigcup_{l=1}^N Q_l(u) \cup Q(u),
\end{split}
\end{equation*}
\normalsize
where 
$
Q_l(u):=R_1^{1}(u)\times\ldots\times R_{l-1}^{1}(u)\times R_{l}^2(u)\times R_{l+1}^1(u)\times\ldots\times R_N^1(u)
$
and $Q(u)$ is the union of $2^N-N-1$ disjoint sub-rectangles:
\footnotesize
$$
Q(u):=\bigcup\limits_{\substack{i_1,\ldots,i_N\in\{1,2\} \\ i_1+\ldots +i_N\ge N+2}}R_1^{i_1}(u)\times\ldots\times R_N^{i_N}(u).
$$
\normalsize
Thus, we can write
\begin{equation}\footnotesize
 \label{eqn::Y}
 Y^{H(u)}(u)= Y_1(u) + \sum_{l=1}^N Z_l(u) + Y_2(u),
 \end{equation}
 where
\begin{equation}\footnotesize
\label{def:Z}
\begin{split}
& Y_1(u):= \int_{[0,\epsilon]^N} g^{H(u)}(u,v) M_{\alpha}(\ud v);~~Y_2(u):= \int_{Q(u)} g^{H(u)}(u,v) M_{\alpha}(\ud v);\\
& Z_l(u):=  \int_{Q_l(u)} g^{H(u)}(u,v) M_{\alpha} (\ud v),~\mbox{ for }~l=1,\ldots,N, 
\end{split}
\end{equation}
with $g^{H(u)}(u,v)$ being defined in (\ref{eqn::g_u_v}). We claim that the random fields $Y_1$, $Y_2$, and $Z_l~(1\leq l \leq N)$ 
are independent since they are defined over disjoint sets. This together with (\ref{eqn::Y}) leads to the following result:  for 
$a_j\in\mathbb{R}$, $u^j \in I$ $(j=1,\ldots,n)$,
\begin{equation}\footnotesize
\label{lem::inequ_X_Y_Z}
\Big\| \sum_{j=1}^n a_j X_0^{H(u^j)}(u^j) \Big\|_{\alpha}^{\alpha} \geq \Big\| \sum_{j=1}^n a_j Y^{H(u^j)}(u^j) \Big\|_{\alpha}^{\alpha}
\geq \sum_{l=1}^N \Big\| \sum_{j=1}^n a_j Z_l(u^j) \Big\|_{\alpha}^{\alpha}.
\end{equation}
Thanks to 
\eqref{lem::inequ_X_Y_Z}, the random fields  $Z_l$, $l=1,\ldots,N$ play a key role in studying the 
joint continuity of local times of $X^{H(\bullet)}$.

Lemma \ref{int_exp} below is an extension of (\ref{int_exp_simpl}) to multivariate integral and it is used 
to derive the forthcoming Lemma \ref{lem::bound_increm}. Its proof is given in 
Appendix \ref{proof_Lemma_217}.
\begin{lemma}
\label{int_exp}
Let $\alpha\in(0,2]$, $n\ge1$, $b_1,\ldots,b_n\ge0$ and let the upper triangle matrix $(a_{i,j})_{i,j=1,\ldots,n}$ satisfy 
$a_{i,i}\ne 0$ for $i=1,\ldots,n$ and $a_{i,j}=0$ for $j<i$. Then the following inequality holds: 
\footnotesize
\begin{equation*}
\int_{\mathbb R^{n}}\Big(\prod_{i=1}^n|x_i|^{b_i}\Big)e^{-\sum\limits_{i=1}^{n}|\sum\limits_{j=i}^na_{i,j}x_j|^\alpha}\ud x_1
\ldots \ud x_{n}\le c_{3,1}(n)\Big(\prod_{i=1}^na_{i,i}\Big)^{-1}\prod_{\substack{i\in\{1,\ldots,n\}\\ b_i\ne 0}}\sum_{j=1}^n|u_{i,j}|^{b_i},
\end{equation*}
\normalsize
where $(u_{i,j})_{i,j=1,\ldots,n}$ is the inverse matrix of $(a_{i,j})_{i,j=1,\ldots,n}$ and
\begin{equation}\footnotesize
\label{c_n}
c_{3,1}(n)=\Big(\prod_{i=1}^n(n^{b_i-1}\vee 1)\Big)(\frac{2}{\alpha})^n\Big(\sup_{j_1,\ldots,j_n\in\{1,\ldots,n\}}
\Gamma\big(\frac{1+\sum_{i= 1}^nb_{j_i}}{\alpha}\big)\Big)^{n}.
\end{equation}
\end{lemma}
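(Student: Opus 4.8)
The plan is to diagonalize the exponent by a linear change of variables and thereby split the $n$-dimensional integral into one-dimensional ones of the type already evaluated in \eqref{int_exp_simpl}. First I would exploit that $(a_{i,j})$ is upper triangular with $a_{i,i}\neq 0$: the map $y=Ax$, i.e. $y_i=\sum_{j=i}^n a_{i,j}x_j$, is a linear bijection of $\mathbb R^n$ whose Jacobian is $\det A=\prod_{i=1}^n a_{i,i}$ and whose inverse $x=A^{-1}y$ is \emph{again} upper triangular, so that $x_i=\sum_{j=i}^n u_{i,j}y_j$ with $(u_{i,j})$ the inverse matrix. After this substitution the exponent becomes the separated sum $\sum_{i=1}^n|y_i|^\alpha$, the weight $\prod_i|x_i|^{b_i}$ becomes $\prod_{i=1}^n\bigl|\sum_{j=i}^n u_{i,j}y_j\bigr|^{b_i}$, and the whole integral acquires the prefactor $\bigl(\prod_{i=1}^n a_{i,i}\bigr)^{-1}$ (in modulus).

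Next I would apply Lemma \ref{ineq:triagnle}, with exponent $b_i$ and $n$ summands, to each factor, getting $\bigl|\sum_{j} u_{i,j}y_j\bigr|^{b_i}\le (n^{b_i-1}\vee 1)\sum_{j=1}^n|u_{i,j}|^{b_i}|y_j|^{b_i}$; rows with $b_i=0$ contribute the factor $1$ together with $n^{b_i-1}\vee 1=1$, so the scalar prefactors collect exactly to $\prod_{i=1}^n(n^{b_i-1}\vee 1)$. Expanding the product of these sums over all tuples $\vec\jmath=(j_i)_{i:\,b_i\neq 0}$ and using that $e^{-\sum_k|y_k|^\alpha}$ factorizes across the coordinates $y_k$, each tuple contributes the coefficient $\prod_{i:\,b_i\neq 0}|u_{i,j_i}|^{b_i}$ times $\prod_{k=1}^n\int_{\mathbb R}|y_k|^{c_k}e^{-|y_k|^\alpha}\ud y_k$, where $c_k=c_k(\vec\jmath):=\sum_{i:\,j_i=k}b_i$. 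Evaluating each one-dimensional integral by \eqref{int_exp_simpl} (with $A=1$, $a=\alpha$, $b=c_k$) gives $\frac{2}{\alpha}\Gamma\bigl(\frac{1+c_k}{\alpha}\bigr)$, hence the factor $(\frac{2}{\alpha})^n\prod_{k=1}^n\Gamma\bigl(\frac{1+c_k}{\alpha}\bigr)$ for that tuple.

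It then remains to bound $\prod_{k=1}^n\Gamma\bigl(\frac{1+c_k}{\alpha}\bigr)$ uniformly in $\vec\jmath$ and to resum the coefficients through $\sum_{\vec\jmath}\prod_{i:\,b_i\neq 0}|u_{i,j_i}|^{b_i}=\prod_{i:\,b_i\neq 0}\sum_{j=1}^n|u_{i,j}|^{b_i}$, which is precisely the product on the right-hand side of the claim. The governing facts are the conservation identity $\sum_{k=1}^n c_k=\sum_{i=1}^n b_i$ together with the ranges $0\le c_k\le\sum_i b_i$ and $\frac{1+c_k}{\alpha}\ge\frac1\alpha\ge\frac12$; dominating each $\Gamma\bigl(\frac{1+c_k}{\alpha}\bigr)$ by $\sup_{j_1,\ldots,j_n}\Gamma\bigl(\frac{1+\sum_i b_{j_i}}{\alpha}\bigr)$ and raising to the $n$-th power yields the Gamma part of $c_{3,1}(n)$. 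I expect this last estimate to be the main obstacle: since $\Gamma$ is \emph{not} monotone on $[\frac12,\infty)$, one cannot merely bound each factor by the value at the largest argument, so the control has to rest on the log-convexity of $\Gamma$ and on the fact that the attainable arguments $\frac{1+c_k}{\alpha}$ all lie between the two extreme values captured by the defining supremum (concentrating the total exponent $\sum_i b_i$ onto a single coordinate being the extremal configuration). Once the Gamma factors are pinned down, assembling the prefactor $\prod_{i=1}^n(n^{b_i-1}\vee 1)$, the power $(\frac{2}{\alpha})^n$, the Jacobian $\bigl(\prod_i a_{i,i}\bigr)^{-1}$, and the resummed coefficient $\prod_{i:\,b_i\neq 0}\sum_{j=1}^n|u_{i,j}|^{b_i}$ is routine and reproduces the stated bound.
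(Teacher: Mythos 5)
Your plan reproduces the paper's own proof essentially step by step: the triangular change of variables $y=Ax$ with Jacobian $\prod_{i=1}^n a_{i,i}$ and upper-triangular inverse, the application of Lemma \ref{ineq:triagnle} to each factor $\bigl|\sum_{j}u_{i,j}y_j\bigr|^{b_i}$, the multinomial expansion over tuples, the factorization into one-dimensional integrals evaluated by \eqref{int_exp_simpl}, and the resummation of the coefficients into $\prod_{i:\,b_i\neq0}\sum_{j=1}^n|u_{i,j}|^{b_i}$. All of these mechanical steps are correct and are exactly what the paper does.

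The step you single out as the main obstacle is indeed the only delicate one, and here you are more careful than the paper, which passes to the constant \eqref{c_n} without comment. But your proposed resolution does not close the gap as stated, and with the constant \eqref{c_n} the claimed inequality can actually fail. Log-convexity of $\Gamma$ does give, for every attainable exponent $c_k=\sum_{i:\,j_i=k}b_i\in\bigl[0,\sum_ib_i\bigr]$,
\begin{equation*}
\Gamma\Bigl(\frac{1+c_k}{\alpha}\Bigr)\le\max\Bigl\{\Gamma\bigl(\tfrac1\alpha\bigr),\,\Gamma\Bigl(\frac{1+\sum_ib_i}{\alpha}\Bigr)\Bigr\},
\end{equation*}
since $\log\Gamma$ is convex and a convex function on an interval is maximized at an endpoint. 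The problem is the left endpoint: the value $\Gamma(1/\alpha)$, which arises for every coordinate $y_k$ receiving none of the weights ($c_k=0$), is dominated by the supremum in \eqref{c_n} only if some $b_i=0$; if all $b_i>0$, the supremum never sees the argument $1/\alpha$, so your assertion that both extremes are ``captured by the defining supremum'' breaks down. Concretely, take $n=2$, $b_1=b_2=1$, $\alpha=2$ and $A$ the identity matrix: the left-hand side of the lemma equals $\bigl(\int_{\mathbb R}|x|e^{-x^2}\ud x\bigr)^2=1$, while the right-hand side is $\Gamma(3/2)^2=\pi/4<1$. Thus both your argument and the paper's require the constant to be enlarged, e.g.\ replacing the supremum in \eqref{c_n} by $\max\bigl\{\Gamma(1/\alpha),\,\sup_{j_1,\ldots,j_n}\Gamma\bigl(\frac{1+\sum_ib_{j_i}}{\alpha}\bigr)\bigr\}$ (equivalently, taking the supremum over all subset sums of the $b_i$, the empty sum included). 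With this correction your outline goes through verbatim, and the modification is harmless in every place the lemma is used (Remark \ref{cor_Lemma}, Lemma \ref{lem::bound_increm}, Lemma \ref{lem::E_L_bound}), since those arguments only need a bound of the form $C^n$ with $C$ independent of $n$.
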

By applying Lemma \ref{int_exp}, a crucial inequality related to the weighted sum 
of $Z_l(u^j)$, $j=1,\ldots,n$ in the $L^\alpha$-(quasi)norm is obtained in (\ref{bound_sumZ})  in Lemma \ref{lem::bound_increm} below.
This inequality is essential for estimating high-order moments of the local times of LMSS in Lemma \ref{lem::E_L}. For 
multifractional Brownian sheets ($\alpha = 2$), a result similar to (\ref{bound_sumZ}) was proved in \cite[Equations (3.25) and (3.29)]{MWX08}. 
For linear fractional stable sheets with $1 <\alpha < 2$, it follows from the proof of \cite[Equation (4.37)]{X11}. A similar inequality 
was also obtained in \cite[Equations (3.33) and (3.34)]{shen2020local} for linear multifractional stable sheets. But the argument
in \cite{shen2020local} makes use of the notions of ``metric projection'' and ``orthogonality'' in \cite{X11}, which relies on the 
assumption of $1 <\alpha < 2$. Moreover, because the dependence of $C$ on $n$ in \cite[Equation (3.22)]{shen2020local} 
is not described, their Lemma 3.6 is not strong enough for proving Theorem 3.3 in \cite{shen2020local}. Our proof of the 
inequality \eqref{bound_sumZ} in Lemma \ref{lem::bound_increm} is based on an extension of the direct approach in
 \cite{ARX07a} and provides more precise information on the constant $c_{3,2}(n)$ in (\ref{def:c0}). 
 As a special case of Lemma \ref{lem::bound_increm}, we derive in Remark 
 \ref{cor_Lemma} that the constant in (\ref{bound_sumZ_simple}) is of the form $c_{3,3}^n$. This is crucial for proving the 
 local H\"older condition for the local times in Section \ref{sec:holder}. We provide the proof of Lemma 
 \ref{lem::bound_increm} in Appendix \ref{proof_Lemma}.
\begin{lemma} 
\label{lem::bound_increm}
Assume $\alpha\in(0,2]$. For any $n\ge1$, $b_1,\ldots,b_n\ge0$, $l\in \{1,\ldots, N\}$,   and $u^j\in [\epsilon, T]^N$ $(j=1,\ldots,n)$ 
with $0=u_l^0\le u_l^1 \le \ldots \le u_l^n$, we have:
\begin{equation}\footnotesize
\label{bound_sumZ}
\int_{\mathbb R^n}\Big(\prod_{i=1}^n|x_i|^{b_i}\Big)e^{-\| \sum_{j=1}^n x_j Z_l(u^j)\|_{\alpha}^{\alpha}}\ud x_1\ldots \ud x_n 
\le c_{3,2}(n)\Big(\prod_{j=1}^n\big(u_l^j - u_l^{j-1}\big)^{-h_l(u^j)(1+\sum_{i=1}^nb_i)}\Big),
\end{equation}
where $c_{3,2}(n)>0$ does not depend on $u^j$, $j=1,\ldots,n$ and its expression is given in (\ref{def:c0}).
\end{lemma}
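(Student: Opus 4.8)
The plan is to bound the integral in \eqref{bound_sumZ} by first establishing a lower bound for the exponent $\big\|\sum_{j=1}^n x_j Z_l(u^j)\big\|_\alpha^\alpha$ that has exactly the upper-triangular shape demanded by Lemma \ref{int_exp}, and then invoking that lemma. Precisely, I would show that there is an upper-triangular matrix $(a_{i,j})_{i,j=1,\ldots,n}$ (so $a_{i,j}=0$ for $j<i$) with diagonal entries $a_{i,i}\asymp (u_l^i-u_l^{i-1})^{h_l(u^i)}$ and uniformly bounded off-diagonal entries, such that
\[
\Big\|\sum_{j=1}^n x_j Z_l(u^j)\Big\|_\alpha^\alpha \ \ge\ \sum_{i=1}^n\Big|\sum_{j=i}^n a_{i,j}\,x_j\Big|^\alpha, \qquad (x_1,\ldots,x_n)\in\mathbb R^n.
\]
Granting this, Lemma \ref{int_exp} bounds the left side of \eqref{bound_sumZ} by $c_{3,1}(n)\big(\prod_i a_{i,i}\big)^{-1}\prod_{i:\,b_i\ne0}\big(\sum_j|u_{i,j}|\big)^{b_i}$, and the task reduces to identifying this with the product on the right of \eqref{bound_sumZ}.

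To produce the lower bound I would exploit the product form of the kernel in \eqref{eqn::g_u_v} together with the nested supports of the $Z_l(u^j)$. By \eqref{def:Z} the measure defining $Z_l(u^j)$ is carried by $Q_l(u^j)=[0,\epsilon]^{l-1}\times(\epsilon,u_l^j]\times[0,\epsilon]^{N-l}$, which depends on $j$ only through the $l$-th edge $(\epsilon,u_l^j]$. Writing $\bar v$ for the $N-1$ transverse coordinates and using that $M_\alpha$ is independently scattered, I split the $l$-th interval $(\epsilon,u_l^n]$ at the ordered knots $u_l^1\le\cdots\le u_l^n$ into disjoint pieces $\Delta_i:=(u_l^{i-1}\vee\epsilon,\,u_l^i]$ and obtain, from \eqref{def:la_norm},
\[
\Big\|\sum_{j=1}^n x_j Z_l(u^j)\Big\|_\alpha^\alpha=\sum_{i=1}^n\int_{[0,\epsilon]^{N-1}}\!\int_{\Delta_i}\Big|\sum_{j=i}^n x_j\,G_j(\bar v)\,(u_l^j-v_l)_+^{h_l(u^j)-1/\alpha}\Big|^\alpha dv_l\,d\bar v,
\]
where $G_j(\bar v)$ collects the transverse factors $\prod_{l'\ne l}c_{H(1)}^{1/N}(u_{l'}^j-v_{l'})^{h_{l'}(u^j)-1/\alpha}$. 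The decisive point is that on $\Delta_i$ only the indices $j\ge i$ contribute, because $(u_l^j-v_l)_+=0$ once $v_l>u_l^j$; this is the triangular structure. Rescaling each piece by $v_l=u_l^i-s(u_l^i-u_l^{i-1})$ extracts the factor $(u_l^i-u_l^{i-1})^{\alpha h_l(u^i)}$ (with the convention $u_l^0=0$ for the leftmost piece), which will serve as $a_{i,i}^\alpha$.

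The heart of the argument is a uniform lower bound for each rescaled piece by a power of a linear form in $(x_i,\ldots,x_n)$. For $\alpha\ge1$ I would obtain this by duality: testing the inner integrand against a fixed profile $\phi$ supported on $\Delta_i$ and applying H\"older's inequality yields $\int_{\Delta_i}|\,\cdot\,|^\alpha\ge \big|\sum_{j\ge i}x_j\,a_{i,j}\big|^\alpha$ with $a_{i,j}$ proportional to $\int G_j\,(u_l^j-\cdot)_+^{h_l(u^j)-1/\alpha}\phi$; tracking the scaling shows the diagonal coefficient is of order $(u_l^i-u_l^{i-1})^{h_l(u^i)}$ while the off-diagonal ones stay bounded, since for $j>i$ and $v_l\in\Delta_i$ the factor $(u_l^j-v_l)^{h_l(u^j)-1/\alpha}$ is free of singularities. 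For $0<\alpha<1$, where H\"older duality is unavailable, I would instead adapt the direct estimate of \cite{ARX07a}, using the linear independence of the rescaled kernels $s\mapsto s^{h_l(u^i)-1/\alpha}$ and $s\mapsto \tilde\psi_{i,j}(s)$ to bound the $L^\alpha$-quasinorm from below uniformly over admissible configurations. Establishing this lower bound \emph{uniformly in the spacings $u_l^j$ and uniformly across $\alpha\in(0,2]$}, while simultaneously controlling the transverse factors $G_j(\bar v)$ after integration over $\bar v\in[0,\epsilon]^{N-1}$, is the main obstacle; it is exactly the step that replaces the local nondeterminism / metric-projection machinery of \cite{X11,shen2020local} and removes the restriction $\alpha\in(1,2)$.

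Finally I would assemble the pieces. Feeding the triangular lower bound into Lemma \ref{int_exp} and computing the inverse $(u_{i,j})$ of a triangular matrix with diagonal $\asymp (u_l^i-u_l^{i-1})^{h_l(u^i)}$ and bounded off-diagonals, each row sum satisfies $\sum_j|u_{i,j}|\asymp\prod_{k\ge i}(u_l^k-u_l^{k-1})^{-h_l(u^k)}$. Combining with $\big(\prod_i a_{i,i}\big)^{-1}\asymp\prod_k(u_l^k-u_l^{k-1})^{-h_l(u^k)}$ gives an exponent $-h_l(u^k)\big(1+\sum_{i\le k}b_i\big)$ on the $k$-th increment; since each increment is at most $1$, I may enlarge the partial sum $\sum_{i\le k}b_i$ to the full sum $\sum_{i=1}^n b_i$, which yields exactly the product in \eqref{bound_sumZ}. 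Tracking the accumulated constants through Lemma \ref{int_exp} and the norm comparisons produces the explicit $c_{3,2}(n)$, and the resulting inequality is precisely what is needed to feed Lemma \ref{lem::E_L}.
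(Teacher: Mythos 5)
Your overall skeleton is the paper's: decompose $\|\sum_j x_j Z_l(u^j)\|_\alpha^\alpha$ over the nested supports $Q_l(u^i)\setminus Q_l(u^{i-1})$ to expose the upper-triangular structure (this is (\ref{decom:Z_l})), rescale the $l$-th coordinate to extract $(u_l^i-u_l^{i-1})^{\alpha h_l(u^i)}$ (this is (\ref{change_var})--(\ref{def:F})), and feed the result into Lemma \ref{int_exp}. For $\alpha\in[1,2]$ your duality step is exactly the paper's argument with the test profile $\phi\equiv 1$: Jensen/H\"older gives (\ref{sum_Z_lower_bound_alpha_1}) with coefficients (\ref{def:theta_ij}), and the rest is the adjugate bookkeeping of (\ref{xi_inv})--(\ref{bound_sum_z_1}).

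The genuine gap is your case $0<\alpha<1$, which you yourself flag as ``the main obstacle'' but do not resolve. Your plan is to prove the pointwise lower bound $\|\sum_j x_jZ_l(u^j)\|_\alpha^\alpha\ge\sum_i\big|\sum_{j\ge i}a_{i,j}x_j\big|^\alpha$ by ``linear independence of the rescaled kernels,'' uniformly in the configuration. But for $\alpha<1$ there is no H\"older/duality mechanism producing a linear form from an $L^\alpha$-quasinorm (the dual of $L^\alpha$ is trivial), and the compactness argument that linear independence suggests (positivity of $x\mapsto\int|\sum_{j\ge i}x_jg_j|^\alpha$ on the unit sphere of $x$) produces a constant depending on the kernels, hence on $u^1,\ldots,u^n$; after rescaling, the family of kernels for $j>i$ still depends on ratios of increments ranging over a non-compact set, so no uniform constant follows. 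Quantifying exactly how such a bound degenerates with the configuration is the entire content of the lemma, so this step cannot be waved through. The paper's proof avoids the issue altogether: for $\alpha<1$ it never lower-bounds the quasinorm by powers of linear forms. Instead, by convexity of $t\mapsto e^{-\beta t}$ and Jensen's inequality over the normalized measure on $S_l(1)$, one gets (\ref{Jensen}), i.e.\ $e^{-\|\sum_j x_jZ_l(u^j)\|_\alpha^\alpha}\le \epsilon^{-(N-1)}\int_{S_l(1)}\exp\big(-\epsilon^{N-1}c_{H(1)}^\alpha\sum_i|\sum_{j\ge i}\eta_{i,j}(r)x_j|^\alpha\big)\ud r$; at each \emph{fixed} $r$ the exponent is exactly (not merely at least) a triangular sum of $\alpha$-powers of linear forms, so Fubini plus Lemma \ref{int_exp} applied pointwise in $r$, followed by integration in $r$, yields (\ref{bound_sum_z_2}). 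You need this convexity--Fubini device, or some genuinely new uniform estimate, to close the case $\alpha<1$.

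A smaller, fixable point: your inverse-matrix bookkeeping produces exponents $-h_l(u^k)\big(1+\sum_{i\le k}b_i\big)$ with partial sums, and you pass to the full sum ``since each increment is at most $1$.'' Increments can be as large as $T$, which may exceed $1$; you should instead absorb the factor $(u_l^k-u_l^{k-1})^{h_l(u^k)\sum_{i>k}b_i}\le \max\{1,T^{\sum_i b_i}\}$ into the constant $c_{3,2}(n)$. In the paper this never arises, because every entry of the inverse matrix carries the full determinant $\prod_k\theta_{k,k}$ (see (\ref{xi_inv})), so the full sum $\sum_i b_i$ appears automatically in every exponent.
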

\begin{remark}
\label{cor_Lemma}
If $b_1=\ldots=b_n=0$, we use the convention $\prod_{\emptyset}(\bullet)\equiv1$ to observe
\footnotesize
$$
\prod_{\substack{i\in\{1,\ldots,n\}\\ b_i\ne 0}}\sum_{j=1}^n|u_{i,j}|^{b_i}=1
$$
\normalsize
and $c_{3,1}(n)=((2/\alpha)\Gamma(1/\alpha))^n$ in (\ref{c_n}). Then $c_{3,2}(n)
=c_{3,3}^n$ in (\ref{def:c0}) for some $c_{3,3}>0$ independent of $n$.  Hence 
(\ref{bound_sumZ}) becomes
\begin{equation}\footnotesize
\label{bound_sumZ_simple}
\int_{\mathbb R^n}e^{-\| \sum_{j=1}^n x_j Z_l(u^j)\|_{\alpha}^{\alpha}}\ud x \le c_{3,3}^n\prod_{j=1}^n\big|u_l^j - u_l^{j-1}\big|^{-h_l(u^j)}.
\end{equation}
Note that (\ref{bound_sumZ_simple}) has been obtained in \cite{ARX07a} for LFSS 
with $\alpha\ge1$. Our result extends it to LMSS with $\alpha\in(0,2]$. We will make use of (\ref{bound_sumZ_simple})
in the proofs of Lemma \ref{lem::E_L} and  Theorem \ref{thm::holder} (see Section \ref{sec:holder}).
\end{remark}

The lemma below is also used in the proof of Lemmas \ref{lem::E_L} and \ref{lem::E_L_bound}; it can be found 
in \cite[Lemma 2.10]{MWX08} or \cite[Lemma 3.4]{AWX08}.

\begin{lemma}
\label{rho_p_q_relation}
Let $\left(\vartheta_1,\ldots,\vartheta_N\right)\in\left(0,1\right)^N$. For any $q\in[0,\sum_{l=1}^N \vartheta_l^{-1}),$ 
let $\tau\in\{1,\ldots,N\} $ be the unique integer such that
$
    \sum_{l=1}^{\tau-1}{1}/{\vartheta_l}\leq q<\sum_{l=1}^{\tau}{1}/{\vartheta_l}
$, 
with the convention that $\sum_{l=1}^0{1}/{\vartheta_l}:=0.$ There then exists a positive constant $\Delta_\tau\leq1,$ 
depending only on $\left(\vartheta_1,\ldots,\vartheta_N\right)$, such that for every $\Delta\in(0,\Delta_\tau)$, we 
can find real numbers $p_1,\ldots,p_\tau\geq1$ satisfying:
\begin{equation}\footnotesize
\label{rho_p_q_relation_result1}
    \sum_{l=1}^{\tau}\frac{1}{p_l}=1, ~~~\frac{\vartheta_lq}{p_l}<1~~~\mbox{for all}~ l=1,\ldots,\tau~~\mbox{and}~~
    (1-\Delta)\sum_{l=1}^{\tau}\frac{\vartheta_lq}{p_l}\leq \vartheta_\tau q+\tau-\sum_{l=1}^{\tau}\frac{\vartheta_\tau}{\vartheta_l}.
\end{equation}
Moreover, let
$
\alpha_\tau:=\sum_{l=1}^{\tau}{1}/{\vartheta_l}-q 
$, 
then for any $\kappa\in(0,{\alpha_\tau}/(2\tau)),$ there is $l_0\in\{1,\ldots,\tau\}$ such that
\begin{equation}\footnotesize
\label{rho_p_q_relation_result3}
    \vartheta_{l_0}\Big(\frac{q}{p_{l_0}}+2\kappa\Big)<1.
\end{equation}
\end{lemma}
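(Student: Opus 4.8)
The plan is to first rewrite the target inequality in a more transparent form and then construct the exponents $p_1,\ldots,p_\tau$ explicitly, saturating the constraints on the first $\tau-1$ coordinates and assigning the residual mass to the $\tau$-th one. Writing $S_{\tau-1}:=\sum_{l=1}^{\tau-1}1/\vartheta_l$ (so $S_{\tau-1}\le q$ by the definition of $\tau$) and $\alpha_\tau=\sum_{l=1}^\tau 1/\vartheta_l-q>0$, a direct computation shows that the right-hand side of the third requirement equals $R:=\vartheta_\tau q+\tau-\sum_{l=1}^\tau \vartheta_\tau/\vartheta_l=\tau-\vartheta_\tau\alpha_\tau$. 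Since $q\ge S_{\tau-1}$ forces $\vartheta_\tau\alpha_\tau\le1$, we get $R\ge\tau-1\ge0$, with $R>0$ whenever $q>0$; the case $q=0$ (which forces $\tau=1$) is trivial because then every term $\vartheta_l q/p_l$ vanishes.

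For $q>0$ I would fix a small parameter $\eta=\eta(q,\Delta)>0$ and set
\[
\frac{1}{p_l}:=\frac{1-\eta}{\vartheta_l q}\quad(l=1,\ldots,\tau-1),\qquad \frac{1}{p_\tau}:=1-\sum_{l=1}^{\tau-1}\frac{1-\eta}{\vartheta_l q},
\]
so that $\sum_{l=1}^\tau 1/p_l=1$ holds automatically. The verification of $p_l\ge1$ and $\vartheta_l q/p_l<1$ is then routine: for $l<\tau$ one has $\vartheta_l q/p_l=1-\eta<1$ and $\vartheta_l q\ge\vartheta_l S_{\tau-1}\ge1$, hence $p_l\ge1$; for $l=\tau$ the bound $S_{\tau-1}\le q$ gives $1/p_\tau\ge\eta>0$, while a short computation yields $\vartheta_\tau q/p_\tau=1-\vartheta_\tau\alpha_\tau+\eta\,\vartheta_\tau S_{\tau-1}$, which is $<1$ once $\eta<\alpha_\tau/S_{\tau-1}$. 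Summing the $\tau$ terms gives $\sum_{l=1}^\tau \vartheta_l q/p_l=R+\eta D$ with $D:=\vartheta_\tau S_{\tau-1}-(\tau-1)$; if $D\le0$ the desired bound $(1-\Delta)\sum_l \vartheta_l q/p_l\le R$ is immediate, and if $D>0$ (which forces $\tau\ge2$, hence $R\ge1$) it follows by taking $\eta\le \Delta R/((1-\Delta)D)$. Thus for every $\Delta\in(0,1)$ and every admissible $q$ one can choose $\eta$ small enough to meet all constraints, and one may take $\Delta_\tau=1$ (indeed any value in $(0,1]$ works, and it depends only on the $\vartheta_l$).

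The second assertion I would deduce from the single identity $\sum_{l=1}^\tau 1/p_l=1$, independently of the particular choice of the $p_l$. Multiplying by $q$ gives $\sum_{l=1}^\tau q/p_l=q$, whereas $\sum_{l=1}^\tau 1/\vartheta_l=q+\alpha_\tau$; subtracting yields
\[
\sum_{l=1}^\tau\Big(\frac{1}{\vartheta_l}-\frac{q}{p_l}\Big)=\alpha_\tau,
\]
and every summand is positive because $\vartheta_l q/p_l<1$. By the pigeonhole principle there is an index $l_0$ with $1/\vartheta_{l_0}-q/p_{l_0}\ge\alpha_\tau/\tau$. For any $\kappa\in(0,\alpha_\tau/(2\tau))$ we then have $2\kappa<\alpha_\tau/\tau\le 1/\vartheta_{l_0}-q/p_{l_0}$, i.e. $q/p_{l_0}+2\kappa<1/\vartheta_{l_0}$, which is exactly $\vartheta_{l_0}(q/p_{l_0}+2\kappa)<1$ after multiplying by $\vartheta_{l_0}>0$; note the \emph{same} $l_0$ works for all such $\kappa$.

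The main obstacle is the first part, namely producing the $p_l$ uniformly across all $q$ yielding a given $\tau$. The delicate points are the two boundary regimes. At the lower end $q=S_{\tau-1}$ the naive choice $1/p_\tau=1-S_{\tau-1}/q$ would vanish, and it is precisely the perturbation $\eta>0$ that keeps $p_\tau$ finite and the strict inequality $\vartheta_\tau q/p_\tau<1$ valid. At the upper end $q\uparrow\sum_{l\le\tau}1/\vartheta_l$ the quantity $\alpha_\tau$ shrinks to $0$, which forces $\eta$ to be taken correspondingly small (this is permitted, since the $p_l$ may depend on $q$). Keeping track of these dependencies while ensuring that the threshold $\Delta_\tau$ itself depends only on $(\vartheta_1,\ldots,\vartheta_N)$ is where the bookkeeping concentrates.
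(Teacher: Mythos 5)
Your proof is correct, and every step checks: the identity $\vartheta_\tau q+\tau-\sum_{l=1}^\tau\vartheta_\tau/\vartheta_l=\tau-\vartheta_\tau\alpha_\tau=:R$; the bound $\vartheta_\tau\alpha_\tau\le 1$ (from $q\ge S_{\tau-1}$), hence $R\ge\tau-1$; the observation that $\vartheta_l q\ge\vartheta_l S_{\tau-1}\ge 1$ for $l<\tau$, which gives $p_l\ge1$; the computation $\vartheta_\tau q/p_\tau=1-\vartheta_\tau\alpha_\tau+\eta\,\vartheta_\tau S_{\tau-1}$, which is $<1$ once $\eta$ is small (and automatically when $\tau=1$); the decomposition $\sum_{l=1}^\tau\vartheta_l q/p_l=R+\eta D$ with $D=\vartheta_\tau S_{\tau-1}-(\tau-1)$; and the case analysis in which $D>0$ forces $\tau\ge2$, hence $R\ge1$, so $\eta\le\Delta R/((1-\Delta)D)$ closes the third inequality. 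The pigeonhole derivation of \eqref{rho_p_q_relation_result3} from the single identity $\sum_{l=1}^\tau(1/\vartheta_l-q/p_l)=\alpha_\tau$ is clean and applies to any $p_l$ satisfying the first two constraints. One thing you could not have known: the paper itself contains no proof of this lemma --- it is imported from \cite{MWX08} and \cite{AWX08} --- so there is no internal argument to compare against, and your write-up functions as a self-contained replacement. The constructions in those references are in the same near-saturation spirit (put almost the full admissible weight $1/\vartheta_l q$ on the first $\tau-1$ indices and the residue on index $\tau$), but they are carried out under an ordering of the Hurst exponents, whereas your argument never orders the $\vartheta_l$; that is precisely the form needed here, since this paper applies the lemma to arbitrary permutations $\sigma(H(v))$ of the components. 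Your conclusion that one may take $\Delta_\tau=1$ is also legitimate (and formally stronger than the statement), because the $p_l$ are allowed to depend on both $q$ and $\Delta$; the only implicit requirements worth stating explicitly are $\eta<1$ and that the bound $\eta<\alpha_\tau/S_{\tau-1}$ is to be read as vacuous when $\tau=1$.
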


We apply Remark \ref{cor_Lemma} in the proof of Lemma \ref{lem::E_L} and apply Lemma \ref{lem::bound_increm} 
in the proof of Lemma \ref{lem::E_L_bound} below. Lemma \ref{lem::E_L} improves  \cite[Lemma 3.5]{MWX08}, through 
obtaining a smaller upper bound  for the $n$th moment of the local times of LMSS under a weaker condition. This 
upper bound is useful for proving a sharp local H\"older condition on the local times (Theorem \ref{thm::holder} in 
Section \ref{sec:holder}) and for studying fractal properties of the level sets of LMSS.
\begin{lemma}
\label{lem::E_L}
Assume $\alpha\in(0,\,2]$ and $\mathcal C_1:$ $d < \inf_{v\in I}\sum_{l=1}^N {1}/{h_l(v)}$. 
Denote by $\mathcal S(N)$ the group of permutations of $\{1,\ldots,N\}$.  For each $\sigma\in \mathcal S(N)$, let
\begin{equation}\footnotesize
\label{def:sigma}
\sigma(H(v)):=\big(h_{\sigma(1)}(v),\ldots, h_{\sigma(N)}(v)\big),~\  v\in I.
\end{equation}
Also denote by
\begin{equation}\footnotesize
\label{ineqn::d_gamma}
\gamma(H(v)):=\min\bigg\{m\in\{1,\ldots,N\}:~ d < \sum_{l=1}^{m} \frac{1}{h_{l}(v)}\bigg\}.
\end{equation}
Then, for every integer $n\geq 1$,  $x\in\mathbb{R}^d$, and
$I_{a,\delta}=\prod_{l=1}^N [a_l,\, a_l + \delta]\subseteq I$ with $\delta\in(0,1]$
sufficiently small, we have
\begin{equation}\footnotesize
\label{bound_EL_1}
\mathbb{E}[L(x,I_{a,\delta})^n] \leq c_{3,4}(n)\delta^{n\overline{\beta}},
\end{equation}
where $c_{3,4}(n)$ is given in (\ref{def:c(n)}) and $\overline{\beta} = \sup\limits_{v\in I_{a,\delta},\,\sigma 
\in \mathcal S(N)}\beta(\sigma(H(v)))$ with
\begin{equation}\footnotesize
\label{def_beta_gamma_v_v}
\begin{split}
 & \beta(\sigma(H(v))):=N-\gamma(\sigma(H(v)))+h_{\gamma(\sigma(H(v)))}(v)\bigg(\sum_{l=1}^{\gamma(\sigma(H(v)))}\frac{1}{h_{\sigma(l)}(v)}-d \bigg).
\end{split}
\end{equation}
\end{lemma}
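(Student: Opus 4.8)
The plan is to bound the $n$th moment of the local time via the standard Fourier-analytic representation and then to estimate the resulting $n$-fold integral using the decomposition $X_0^{H(\bullet)} = Y_1 + \sum_{l=1}^N Z_l + (\text{boundary terms})$ together with the key inequality \eqref{bound_sumZ_simple} from Remark \ref{cor_Lemma}. First I would write, following \cite[Theorem 21.9]{GH80} and its standard consequences, the moment formula
\begin{equation*}\footnotesize
\mathbb E\big[L(x,I_{a,\delta})^n\big]=(2\pi)^{-nd}\int_{(I_{a,\delta})^n}\int_{(\mathbb R^d)^n}
\exp\Big(-i\sum_{j=1}^n\langle\xi^j,x\rangle\Big)\,\mathbb E\exp\Big(i\sum_{j=1}^n\langle\xi^j,X^{H(u^j)}(u^j)\rangle\Big)\ud\bar\xi\,\ud\bar u,
\end{equation*}
where $\bar u=(u^1,\ldots,u^n)$ and $\bar\xi=(\xi^1,\ldots,\xi^n)$. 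Discarding the oscillatory factor by the triangle inequality and using the characteristic-function formula \eqref{characteristic} coordinate-wise (the $d$ coordinates of $X^{H(\bullet)}$ are i.i.d.\ copies of $X_0^{H(\bullet)}$), the $\xi$-integral factors over the $d$ coordinates, so that
$\mathbb E[L(x,I_{a,\delta})^n]\le(2\pi)^{-nd}\int_{(I_{a,\delta})^n}\big(\Phi(\bar u)\big)^d\,\ud\bar u$,
where $\Phi(\bar u)=\int_{\mathbb R^n}\exp\big(-\|\sum_{j=1}^n y_j X_0^{H(u^j)}(\cdot)\|_\alpha^\alpha\big)\ud\bar y$ is a single-coordinate integral.

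Next I would exploit \eqref{lem::inequ_X_Y_Z}, which gives the lower bound $\|\sum_j y_j X_0^{H(u^j)}(u^j)\|_\alpha^\alpha\ge\sum_{l=1}^N\|\sum_j y_j Z_l(u^j)\|_\alpha^\alpha$, and hence $\Phi(\bar u)^d$ can be controlled by a product over the $d$ coordinates, each of which is bounded by applying \eqref{bound_sumZ_simple} after sorting the $u^j$ in each of the $N$ directions. Because the sorting permutations differ across directions, the cleanest route is to fix a permutation $\sigma\in\mathcal S(N)$ recording the relative ordering of the coordinates and, after the change of variables that orders $u_l^1\le\cdots\le u_l^n$ in each direction $l$, to sum over the (at most $N!$) orderings. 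Applying \eqref{bound_sumZ_simple} in each direction $l$ yields a bound of the shape $c_{3,3}^{Nn}\prod_{l=1}^N\prod_{j=1}^n|u_l^j-u_l^{j-1}|^{-h_l(u^j)}$ raised to a power tied to how many of the $d$ coordinates we spend in direction $l$; the allocation of the $d$ "copies'' among the $N$ directions is precisely what the definition \eqref{ineqn::d_gamma} of $\gamma(H(v))$ governs. For each ordering I would use $\gamma(\sigma(H(v)))$ directions fully and split the critical direction, matching the exponent $\beta(\sigma(H(v)))$ in \eqref{def_beta_gamma_v_v}.

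The heart of the estimate is the iterated one-dimensional integral
$\int_{[a_l,a_l+\delta]^n}\prod_{j=1}^n|u_l^j-u_l^{j-1}|^{-\kappa_l h_l(u^j)}\,\ud u_l^1\cdots\ud u_l^n$
over each ordered simplex, where $\kappa_l$ is the number of coordinates assigned to direction $l$. Here I would invoke Lemma \ref{rho_p_q_relation} with $\vartheta_l=h_l(v)$ and $q=d$: it supplies exponents $p_1,\ldots,p_\tau$ with $\sum 1/p_l=1$ and $\vartheta_l q/p_l<1$, which lets me apply a generalized Hölder inequality to split the $d$-th power across the directions so that each one-dimensional integral has an exponent strictly below $1$ and is therefore convergent, producing a power of $\delta$. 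Since $h_l$ is only Hölder-continuous (condition $\mathcal H_2$), I would replace $h_l(u^j)$ by its supremum/infimum over the small cube $I_{a,\delta}$, absorbing the error via $\mathcal H_2$ into the constant at the cost of an arbitrarily small loss in the exponent; taking the supremum over $v\in I_{a,\delta}$ and $\sigma\in\mathcal S(N)$ then yields the exponent $n\overline\beta$ and the constant $c_{3,4}(n)$.

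The main obstacle I anticipate is bookkeeping the allocation of the $d$ dimensions across the $N$ directions so that each resulting one-dimensional exponent stays below the integrability threshold $1$ while the total power of $\delta$ accumulates to exactly $n\overline\beta$; this is where the exact statement of Lemma \ref{rho_p_q_relation}, the definition of $\gamma$, and the fractional splitting of the critical direction $\gamma(\sigma(H(v)))$ all have to be reconciled simultaneously, uniformly over the permutations $\sigma$ and over $v\in I_{a,\delta}$.
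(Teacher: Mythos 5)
Your outline retraces the paper's own proof almost step for step: the Fourier moment formula from \cite{GH80}, factorization over the $d$ i.i.d.\ coordinates, the reduction \eqref{lem::inequ_X_Y_Z} to the independent fields $Z_l$, the generalized H\"older splitting with exponents $p_l$ supplied by Lemma \ref{rho_p_q_relation}, the single-direction bound \eqref{bound_sumZ_simple}, and the final optimization over $v\in I_{a,\delta}$ and $\sigma\in\mathcal S(N)$. However, two of the steps, as you describe them, would not deliver the lemma as stated. First, handling the $\mathcal H_2$-oscillation of $h_l$ ``at the cost of an arbitrarily small loss in the exponent'' is not sufficient: the lemma claims the exact exponent $n\overline\beta$ with a constant $c_{3,4}(n)$ that is independent of $a$ and $\delta$, and an $\epsilon$-lossy exponent (with an $\epsilon$-dependent constant) is strictly weaker. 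The paper's fix is quantitative rather than qualitative: replacing $h_l(u^{\pi_l(j)})$ by $h_l(u^*)+c_0(\delta)$ with $c_0(\delta)=c\sum_{l}\delta^{M_l}$ as in \eqref{up_bound_u_pi_by_u_star}--\eqref{c_0_delta}, one exploits $\sup_{\delta\in(0,1]}\delta^{-c_0(\delta)}<\infty$ (since $c_0(\delta)\log(1/\delta)\to 0$), so the entire correction is absorbed into the $\delta$-independent constant \eqref{def:c3n} and \emph{no} exponent is lost at all.

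Second, what you call the ``main obstacle'' is precisely the core of the proof, and it remains unresolved in your plan. You invoke only the first two properties of Lemma \ref{rho_p_q_relation} ($\sum_l 1/p_l=1$ and $\vartheta_l q/p_l<1$), which guarantee convergence of each one-dimensional simplex integral but not the claimed power of $\delta$. To land exactly on $\beta(\sigma(H(v)))$ of \eqref{def_beta_gamma_v_v} one needs (i) the sharp simplex-integral estimate of \cite[Lemma 2.11]{MWX08}, which produces the factor $\delta^{n(1-(1-\frac1n)b)}$ as in \eqref{eqn::I_pi_l_bound_cont} (note the extra full power of $\delta$ coming from the location of the first ordered point, beyond the $n-1$ increments), and (ii) the \emph{third} inequality in \eqref{rho_p_q_relation_result1} applied with $\Delta=n^{-1}$, which is exactly what converts $(1-\frac1n)\sum_{l}dh_l(u^*)/p_l$ into the bound \eqref{p_2} defining $\beta$; the matching of the $(1-\frac1n)$ factor from the integral with the $(1-\Delta)$ in the lemma is the reconciliation you leave open. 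A smaller bookkeeping slip: the domain decomposition sums over $N$ independent sorting permutations $\pi_1,\ldots,\pi_N\in\mathcal S(n)$ of the $n$ time points, hence $(n!)^N$ ordered regions (which is where the $(n!)^{N-\beta}$ factors in $c_{3,4}(n)$ originate), not over ``at most $N!$'' orderings; the group $\mathcal S(N)$ of direction permutations enters only through the final optimization over $\sigma$, not through any summation.
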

\begin{proof}
By \cite[Equation (25.5)]{GH80}, we have: for all $x,\,y\in\mathbb{R}^d$, all Borel sets $J \subseteq I$, 
and all integers $n\geq 1$,
\begin{equation}\footnotesize
\label{eqn::E_L}
\mathbb{E}[L(x,J)^n] = (2\pi)^{-nd} \int_{J^n} \int_{\mathbb{R}^{nd}} e^{-i \sum_{j=1}^n \langle v^j,x\rangle}
\mathbb{E} \Big[e^{i \sum_{j=1}^n \langle v^j, X^{H(u^j)}(u^j)\rangle}\Big] \ud\bar{v} \ud\bar{u},
\end{equation}
where $\bar{v}:=(v^1,\ldots,v^n)$, $\bar{u}:=(u^1,\ldots,u^n)$ and  $v^j:=(v_1^j,\ldots,v_d^j) \in \mathbb{R}^d$, 
$u^j:=(u_1^j,\ldots,u_N^j)$ $\in I$ for each $j=1,\ldots,n$. By (\ref{eqn::E_L}), the fact that the coordinate processes 
$X_1^{H(\bullet)}, \ldots, X_d^{H(\bullet)}$  are independent and
identically distributed, and (\ref{characteristic_LMSS}), we have
\begin{equation}\footnotesize
\label{eqn::E_L_x_T}
\mathbb{E} \big[L(x,I_{a,\delta})^n \big] \le (2\pi)^{-nd} \int_{I_{a,\delta} ^n} \prod_{k=1}^d Q_k(\overline u) \ud\overline{u},
\end{equation}
where
\footnotesize$$
Q_k(\overline u):=\int_{\mathbb{R}^n} e^{-\| \sum_{j=1}^n v_k^j X_1^{H(u^j)}(u^j) \|_{\alpha}^{\alpha}} \ud v_k,~  \
\mbox{ with \ $v_k:=(v_k^1, \ldots, v_k^n) \in \mathbb{R}^n$.}
$$\normalsize

Let $u^*\in I_{a,\delta}$ be fixed, but arbitrary. Since
$
d < \inf_{v\in I}\sum_{l=1}^N {1}/{h_l(v)}\le \inf_{v\in I_{a,\delta}}\sum_{l=1}^N {1}/{h_l(v)},
$
(\ref{ineqn::d_gamma}) guarantees that the choice of $\gamma(H(u^*))$ is unique.  Observe that, by 
(\ref{lem::inequ_X_Y_Z}) and the fact that $\gamma(H(u^*))\le N$, we have for $k\in\{1,\ldots,d\}$,
\begin{equation}\footnotesize
\label{eqn::J_k}
Q_k (\overline u) \leq \int_{\mathbb{R}^n} e^{-\sum_{l=1}^N \| \sum_{j=1}^n v_k^j Z_l(u^j) \|_{\alpha}^{\alpha}} \ud v_k   
\leq \int_{\mathbb{R}^n} e^{-\sum_{l=1}^{\gamma(H(u^*))} \| \sum_{j=1}^n v_k^j Z_l(u^j) \|_{\alpha}^{\alpha}} \ud v_k.
\end{equation}
By (\ref{eqn::J_k}) and the generalized H\"older's inequality we get
\begin{equation}\footnotesize
\label{eqn::J_k_bound}
Q_k(\overline u) \leq \int_{\mathbb{R}^n} \prod_{l=1}^{\gamma(H(u^*))} e^{ -\| \sum_{j=1}^n v_k^j Z_l(u^j) \|_{\alpha}^{\alpha}} 
\ud v_k  \leq \prod_{l=1}^{\gamma(H(u^*))} \bigg[ \int_{\mathbb{R}^n} e^{ -p_l\| \sum_{j=1}^n v_k^j Z_l(u^j) \|_{\alpha}^{\alpha}} \ud v_k \bigg]^{1/p_l},
\end{equation}
where $p_1,\ldots,p_{\gamma(H(u^*))}\geq 1$ satisfy $\sum_{l=1}^{\gamma(H(u^*))}{1}/{p_l}=1$. 
They are chosen as in Lemma \ref{rho_p_q_relation} with $q = d$ and $\theta_l = h_l(u^*)$. Applying  (\ref{bound_sumZ_simple}) 
to (\ref{eqn::J_k_bound}) yields
\begin{equation}\footnotesize
\label{eqn::J_k_bound_cont}
Q_k(\overline u)\leq c_{3,5}^n \prod_{l=1}^{\gamma(H(u^*))} \prod_{j=1}^n (u_l^{\pi_l(j)} - u_l^{\pi_l(j-1)})^{- h_l(u^{\pi_l(j)})/p_l},
\end{equation}
where  $c_{3,5}=\max\limits_{m\in \{1,\ldots,N\}}\prod_{l=1}^{m}\big(c_{3,3}p_l^{-1/\alpha}\big)^{1/p_l}$ and, for each $l\in\{1,\ldots,N\}$, $\pi_l\in\mathcal S(n)$ satisfies
 \footnotesize
\begin{equation*}
a_l \leq u_l^{\pi_l(1)} \leq \ldots \leq  u_l^{\pi_l(n)} \leq a_l + \delta~\ \mbox{ and }~\pi_l(0)=0.
\end{equation*}
\normalsize
For each $l=1,\ldots,N$, define
\begin{equation}\footnotesize
    \label{Bigpi}
    \Pi_l:=\big\{(u_l^1,\ldots,u_l^n)\in [a_l, a_l+\delta]^n:~a_l \leq u_l^{\pi_l(1)} \leq \ldots \leq  u_l^{\pi_l(n)} \leq a_l + \delta\big\}.
\end{equation}
Due to $\mathcal H_2$ 
and the fact that $|u_l^i-u_l^j|\le \delta\le 1$ for all $u_l^i,u_l^j\in[a_l,a_l+\delta]$, we have: 
for $u^{*}\in I_{a,\delta}$,
\begin{equation}\footnotesize
\label{up_bound_u_pi_by_u_star}
\begin{split}
    &\big(u_l^{\pi_l(j)} - u_l^{\pi_l(j-1)}\big)^{- h_l(u^{\pi_l(j)})}\\
    &\le \big(u_l^{\pi_l(j)} - u_l^{\pi_l(j-1)}\big)^{-| h_l(u^{*})-h_l(u^{\pi_l(j)})|} 
    \big(u_l^{\pi_l(j)} - u_l^{\pi_l(j-1)}\big)^{- h_l(u^{*})} \\
    &\leq \big(u_l^{\pi_l(j)} - u_l^{\pi_l(j-1)}\big)^{-(h_l(u^{*})+c_0(\delta))},
    \end{split}
\end{equation}
\normalsize
where 
\begin{equation}\footnotesize
\label{c_0_delta}
c_0(\delta):=c\sum_{l=1}^N\delta^{M_l}>0, 
\end{equation}
with $c>0$ being the constant in $\mathcal H_2$. 
Combining (\ref{eqn::E_L_x_T}) - (\ref{c_0_delta}),  we derive:
\begin{equation}\footnotesize
\label{eqn::E_L_x_T_bound}
\begin{split}
&\mathbb{E}[L(x,I_{a,\delta})^n]\\ 
&\le c_{3,6}^n \sum_{\pi_1,\ldots,\pi_N\in\mathcal S(n)}\int_{\Pi_1\times\ldots\times\Pi_N}\prod_{l=1}^{\gamma(H(u^*))} 
\prod_{j=1}^n (u_l^{\pi_l(j)} - u_l^{\pi_l(j-1)})^{-d h_l(u^{\pi_l(j)})/p_l}\ud \overline u   \\
&\le c_{3,6}^n \delta^{n(N-\gamma(H(u^*)))}\\
&\hspace{1cm}\times\sum\limits_{\pi_1,\ldots,\pi_{\gamma(H(u^*))}\in\mathcal S(n)} \bigg(\prod_{l=1}^{\gamma(H(u^*))}  \int_{\Pi_l} 
\prod_{j=1}^n (u_l^{\pi_l(j)} - u_l^{\pi_l(j-1)})^{-\frac{d (h_l(u^*)+c_0(\delta))}{p_l}}\ud  u_l\bigg) ,
\end{split}
\end{equation}
\normalsize
where $c_{3,6}=(2\pi)^{-d}c_{3,5}^d$. Next consider the following integral in (\ref{eqn::E_L_x_T_bound}):
\begin{equation}\footnotesize
\label{def:I_pi}
I_{\pi_l}:= \int_{\Pi_l} \prod_{j=1}^n (u_l^{\pi_l(j)} - u_l^{\pi_l(j-1)})^{-d (h_l(u^*)+c_0(\delta))/p_l}\ud u_l.
\end{equation}
Recall that, by Lemma \ref{rho_p_q_relation}, the real numbers $p_1,\ldots,p_{\gamma(H(u^*))} $ chosen in 
(\ref{eqn::J_k_bound}) also satisfy 
$dh_l(u^*)/p_l$ $<1~\mbox{ for $l=1,\ldots,\gamma(H(u^*))$}$, 
we can then choose $c_0(\delta)$ small enough such that
\begin{equation}\footnotesize
    \label{dhpless1}
    \frac{d(h_l(u^*)+c_0(\delta))}{p_l}<1~\mbox{ for all $l=1,\ldots,\gamma(H(u^*))$}. 
\end{equation}
Then apply \cite[ Lemma 2.11]{MWX08} (or  \cite[Lemma 3.6]{AWX08}) to (\ref{def:I_pi}) to obtain: there is a constant
$c_{3,7}(l,u^*)>0$ depending only on $l$ and $u^*$ (continuously) such that
\begin{equation}\footnotesize
\label{eqn::I_pi_l_bound_cont}
  I_{\pi_l}\leq c_{3,7}^n(l,u^*)(n!)^{\frac{d(h_l(u^*)+c_0(\delta))}{p_l}-1}\delta^{n\big(1-(1-\frac{1}{n})\frac{d(h_l(u^*)+c_0(\delta))}{p_l}\big)}.
\end{equation}

Combining (\ref{eqn::E_L_x_T_bound}) and (\ref{eqn::I_pi_l_bound_cont}) yields
\footnotesize
\begin{equation}
\label{eqn::E_L_x_T_bound_cont}
\mathbb{E}[L(x,I_{a,\delta})^n] \leq c_{3,8}(n) (n!)^{(1-\frac{1}{n})\sum_{l=1}^{\gamma(H(u^*))}\frac{dh_l(u^*)}{p_l}}  
\delta^{n(N-(1-\frac{1}{n})\sum_{l=1}^{\gamma(H(u^*))}\frac{dh_l(u^*)}{p_l})},
\end{equation}
\normalsize
where 
\footnotesize
\begin{equation}\footnotesize
\label{def:c3n}
c_{3,8}(n)=c_{3,6}^n2^{Nn}\sup\limits_{\substack{u\in I\\ m\in \{1,\ldots,N\}\\  \delta\in[0,1]}}
\Bigg\{\prod\limits_{l=1}^{m}c_{3,7}^n(l,u)(n!)^{\frac{dc_0(\delta)}{p_l}}
\delta^{-n\frac{dc_0(\delta)}{p_l}}\Bigg\}
\end{equation}
\normalsize 
does not depend on $\delta$. It is worth noting that the above $\sup_{\delta\in[0,1]}\delta^{-c_0(\delta)} <\infty$, thanks to the fact that
$
\lim_{\delta\to0}\delta^{-c\sum_{l=1}^N\delta^{M_l}}=1.
$
Applying Lemma \ref{rho_p_q_relation} with $\Delta=n^{-1}, q=d, \vartheta_l=h_l(u^*)$, we obtain
\begin{equation}\footnotesize
    \label{p_2}
    \begin{split}
 &\Big(1-\frac{1}{n}\Big)\sum_{l=1}^{\gamma(H(u^*))}\frac{h_l(u^*)d}{p_l}\le
 h_{\gamma(H(u^*))}(u^*)d+\gamma(H(u^*))-\sum_{l=1}^{\gamma(H(u^*))}
 \frac{h_{\gamma(H(u^*))}(u^*)}{h_{l}(u^*)}.
 \end{split}
\end{equation}
Therefore, \eqref{eqn::E_L_x_T_bound_cont} together with (\ref{p_2}) yields
\begin{equation}\footnotesize
\label{bound_EL_step1}
\mathbb{E}[L(x,I_{a,\delta})^n]\leq c_{3,8}(n) (n!)^{N-\beta(H(u^*))} \delta^{n\beta(H(u^*))}
\leq c_{3,4}(n) \delta^{n\beta(H(u^*))},
\end{equation}
where $\beta(\bullet)$ is defined in (\ref{def_beta_gamma_v_v}) and 
\begin{equation}\footnotesize
\label{def:c(n)}
    c_{3,4}(n)=c_{3,8}(n) \sup_{u\in I}\Big\{(n!)^{N-\beta(H(u))}\Big\}.
\end{equation}
In (\ref{bound_EL_step1}), since $u^*$ can be arbitrarily chosen in $I_{a,\delta}$, and the $h_l(\bullet)$'s in $H(\bullet)$
can be arbitrarily ordered,  taking the infimum over $u^*\in I_{a,\delta}$ and $\sigma\in \mathcal S(N)$ on both hand sides of
(\ref{bound_EL_step1}) leads to (\ref{bound_EL_1}). 
Therefore, Lemma \ref{lem::E_L} is proved.
\end{proof}

\begin{remark} For each fixed $n\ge1$, if we let $c_0(\delta)\le 1/n$ in Lemma \ref{lem::E_L}, we obtain 
$c_{3,8}(n)\le c_{3,9}^n$ in  (\ref{def:c3n}) for some $c_{3,9}>0$, 
thanks to Stirling's formula. As a result, Lemma \ref{lem::E_L} becomes
\footnotesize
$$
\mathbb{E}[L(x,I_{a,\delta})^n]\leq c_{3,9}^n \sup_{u\in I}\Big\{(n!)^{N-\beta(H(u))}\Big\} \delta^{n\beta(H(u^*))}.
$$
\normalsize
This observation will be used in the proof of Theorem \ref{thm::holder}.
Now we compare the moment estimates in \eqref{bound_EL_1} and (\ref{bound_EL_step1})  with those in 
\cite{AWX08,MWX08, shen2020local}. 
When either $(i)$ LMSS is reduced to LFSS or $(ii)$  $H_{\gamma(H(u^*))}(u^*)=\sup_{v\in I_{a,\delta}}H_{\gamma(H(v))}(v)$, 
we can replace $c_0(\delta) $ in (\ref{up_bound_u_pi_by_u_star}) by  $0$.  As a consequence, $c_{3,8}(n)
\le c_{3,9}^n$. Hence (\ref{bound_EL_step1}) includes \cite[Equation (3.38)]{AWX08} for fractional Brownian sheets and  
\cite[Equation (3.16)]{MWX08} for multifractional Brownian sheets (where $H_{\gamma(H(u^*))}(u^*)$ is replaced with 
$\sup_{v\in I_{a,\delta}}H_{\gamma(H(v))}(v)$) as special cases. However, a stronger condition $d<\sum_{l=1}^N1/\sup_{v\in I_{a,\delta}}h_l(v)$
than that in Lemma \ref{lem::E_L} was assumed in \cite{MWX08}. In \citep[Equation (3.22)]{shen2020local}, a result similar 
to Lemma \ref{lem::E_L} was also proved for LMSS, but the dependence of $C$ on $n$ was not described there. As a consequence, 
the estimate (3.22) in \cite{shen2020local} was not strong enough for proving the claimed Theorem 3.3 in \cite{shen2020local}. 
With (\ref{def:c(n)}) our Lemma \ref{lem::E_L}  fills this gap, which is important for proving the local H\"older condition
in Theorem \ref{thm::holder}, where for each $n\ge1$, we will take $c_0(\delta)\asymp 2^{-e^n}\le 1/n$.
\normalsize
\end{remark}

\begin{remark}
For the proof of Theorem \ref{thm::joint_con}, we will make use of the multiparameter version of Kolmogorov's continuity theorem 
(cf. \cite{K02}) and only moments of the local times of large but fixed order $n$ will be needed. It is sufficient to use 
the following simpler variant of Lemma \ref{lem::E_L}. In the last inequality of (\ref{eqn::J_k}), we replace $\gamma(H(u^*))$ with 
$N$ and apply the generalized H\"older's inequality to the second inequality in  (\ref{eqn::J_k_bound}) 
with $N$ positive numbers $p_1,\ldots, p_N$ defined by 
 $
p_l:=\sum_{l'=1}^N{h_l(u^*)}/{h_{l'}(u^*)}, ~~l=1,\ldots,N.
$ 
Then following the same proof in Lemma \ref{lem::E_L}, we have, for every interval $I_{a,\delta}
=\prod_{l=1}^N [a_l, $ $a_l + \delta] \subseteq I$, (\ref{eqn::E_L_x_T_bound_cont})  becomes
\begin{equation}\footnotesize
\label{eqn::E_L_x_T_bound_cont_spec}
\begin{split}
&\mathbb{E}[L(x,I_{a,\delta})^n]
\leq c_{3,10}(n) (n!)^{\sum_{l=1}^{N}\frac{dh_l(u^*)}{p_l}} \prod_{l=1}^{N} \delta^{n(1-\frac{dh_l(u^*)}{p_l})}
=c_{3,10}(n) (n!)^{N\nu}\big(\lambda_N(I_{a,\delta})\big)^{n(1-\nu)},
\end{split}
\end{equation}
where $\nu:=d/(\sum_{l=1}^N{1}/{h_l(u^*)})$, $c_{3,10}(n)>0$ does not depend on $\delta$.
\end{remark}

Lemma \ref{lem::E_L_bound} below is another key step leading to Theorem \ref{thm::joint_con}. We provide its proof in 
Appendix \ref{Proof_Lemma_218}.
\begin{lemma}
\label{lem::E_L_bound}
Let $X^{H(\bullet)}$ be an $(N, d)$-LMSS and assume $d < \inf_{v\in I}\sum_{l=1}^N {1}/{h_l(v)}$. Then for any 
 integer $n\geq1$, there exists $c_{3,11}(n)>0$ such that for any subintervals $I_{a,\delta}=\prod_{l=1}^N [a_l, a_l 
 + \delta]\subseteq I$ with $\delta \in(0,1]$ small enough, any $x,y\in\mathbb{R}^d$ with $|x-y|\leq 1$,
\begin{equation}\footnotesize
\label{lem::E_L_bound_result}
\begin{split}
&\mathbb{E}\big[|L(x,I_{a,\delta}) - L(y,I_{a,\delta})|^n\big] \\
&\leq c_{3,11}(n)\inf_{v\in I_{a,\delta},\sigma\in\mathcal S(N)}\Big\{|x-y|^{n\kappa_n(\sigma(H(v)))} 
\delta^{n(\beta(\sigma(H(v))-(n-1)h_{\gamma(\sigma(H(v)))}(v)\kappa_n(\sigma(H(v))))}\Big\},
\end{split}
\end{equation}
where $\sigma(H(v))$, $\gamma(H(v))$, and $\beta(H(v))$ are defined in (\ref{def:sigma}),  (\ref{ineqn::d_gamma}), and (\ref{def_beta_gamma_v_v}), respectively; for each $v\in I_{a,\delta}$, $\kappa_n(H(v))$ (depending on $n$) 
is some real number satisfying
\footnotesize
\begin{equation}
    \label{def:kappa}
n\kappa_n(H(v))\in\Big(0,1\wedge\frac{\alpha(H(v))}{2\gamma(H(v))}\Big)~\mbox{ with}~ \alpha(H(v))
:=\sum_{l=1}^{\gamma(H(v))} \frac{1}{h_l(v)} - d.
\end{equation}
\end{lemma}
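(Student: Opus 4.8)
The plan is to adapt the proof of Lemma~\ref{lem::E_L}, replacing the moment formula \eqref{eqn::E_L} by its analogue for the \emph{increment} of the local time: for every (even) integer $n\ge1$,
\begin{equation*}
\mathbb{E}\big[|L(x,I_{a,\delta})-L(y,I_{a,\delta})|^n\big]=(2\pi)^{-nd}\int_{I_{a,\delta}^n}\int_{\mathbb R^{nd}}\prod_{j=1}^n\big(e^{-i\langle v^j,x\rangle}-e^{-i\langle v^j,y\rangle}\big)\,\mathbb E\Big[e^{i\sum_{j=1}^n\langle v^j,X^{H(u^j)}(u^j)\rangle}\Big]\ud\bar v\,\ud\bar u.
\end{equation*}
Since the characteristic function in \eqref{characteristic_LMSS} is real and positive, I would bound the oscillatory factors by moduli (valid for even $n$, which is all that is needed for the Kolmogorov-type argument behind Theorem~\ref{thm::joint_con}) using $|e^{-i\langle v^j,x\rangle}-e^{-i\langle v^j,y\rangle}|\le 2^{1-\kappa}|x-y|^{\kappa}|v^j|^{\kappa}$ for any $\kappa\in[0,1]$. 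Fixing an arbitrary $u^*\in I_{a,\delta}$ and taking $\kappa=\kappa_n(H(u^*))$ extracts the prefactor $|x-y|^{n\kappa_n}$ together with the weight $\prod_{j=1}^n|v^j|^{\kappa_n}$. As in Lemma~\ref{lem::E_L}, the inner integral factorizes over the $d$ i.i.d.\ coordinate processes, and \eqref{lem::inequ_X_Y_Z} bounds the exponent for coordinate $k$ below by $\sum_{l=1}^{\gamma(H(u^*))}\|\sum_j v_k^j Z_l(u^j)\|_\alpha^\alpha$.

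I would then spread the weight across coordinates via $|v^j|^{\kappa_n}\le\sum_{k=1}^d|v_k^j|^{\kappa_n}$, writing $\prod_j|v^j|^{\kappa_n}$ as a sum of $d^n$ products $\prod_j|v_{k_j}^j|^{\kappa_n}$; for each routing the integral splits over the $d$ coordinates. To each coordinate integral I apply the generalized H\"older inequality with the exponents $p_1,\dots,p_{\gamma(H(u^*))}$ from Lemma~\ref{rho_p_q_relation} (with $q=d$, $\vartheta_l=h_l(u^*)$). The key device is to load the \emph{entire} weight onto the single direction $l_0$ furnished by \eqref{rho_p_q_relation_result3}: on that factor I use Lemma~\ref{lem::bound_increm} with exponents $b_j=\kappa_n p_{l_0}$ on the routed indices, and on every other direction $l\le\gamma(H(u^*))$ I use the weight-free estimate \eqref{bound_sumZ_simple} of Remark~\ref{cor_Lemma}. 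After summing the per-coordinate exponents, each routing yields a bound $c^n\prod_{l\le\gamma(H(u^*))}\prod_j(u_l^{\pi_l(j)}-u_l^{\pi_l(j-1)})^{-h_l(u^{\pi_l(j)})d/p_l}$ carrying one \emph{extra} factor $\prod_j(u_{l_0}^{\pi_{l_0}(j)}-u_{l_0}^{\pi_{l_0}(j-1)})^{-h_{l_0}(u^{\pi_{l_0}(j)})n\kappa_n}$; the $d^n$ routings are absorbed into the constant.

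The $\ud\bar u$ integration then proceeds exactly as in Lemma~\ref{lem::E_L}: I replace each $h_l(u^{\pi_l(j)})$ by $h_l(u^*)+c_0(\delta)$ through \eqref{up_bound_u_pi_by_u_star}, integrate every direction by \cite[Lemma~2.11]{MWX08}, and convert $\sum_{l\le\gamma(H(u^*))}h_l(u^*)d/p_l$ into $\beta(H(u^*))$ via \eqref{p_2}. The weight-free part reproduces the factor $\delta^{n\beta(H(u^*))}$ of Lemma~\ref{lem::E_L}, while the extra factor contributes $\delta^{-n(n-1)\kappa_n h_{l_0}(u^*)}$. For \cite[Lemma~2.11]{MWX08} to apply on direction $l_0$ one needs the per-factor exponent $h_{l_0}(u^*)\big(d/p_{l_0}+n\kappa_n\big)<1$; this is exactly what \eqref{rho_p_q_relation_result3} delivers once the parameter $\kappa$ of Lemma~\ref{rho_p_q_relation} is set to $n\kappa_n(H(u^*))$, which is admissible precisely under the hypothesis $n\kappa_n(H(u^*))<\alpha(H(u^*))/(2\gamma(H(u^*)))$ in \eqref{def:kappa}, the spare factor of $2$ also absorbing the $c_0(\delta)$ perturbation for $\delta$ small.

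The main obstacle is the final exponent bookkeeping. Collecting the above gives $\mathbb{E}[|L(x,I_{a,\delta})-L(y,I_{a,\delta})|^n]\le c^n|x-y|^{n\kappa_n}\delta^{n(\beta(H(u^*))-(n-1)\kappa_n h_{l_0}(u^*))}$, so to reach \eqref{lem::E_L_bound_result} I must dominate the loss $h_{l_0}(u^*)$ by $h_{\gamma(H(u^*))}(u^*)$. This is automatic once the coordinates are relabelled in increasing order of $h_l(u^*)$, since then $l_0\le\gamma(H(u^*))$ forces $h_{l_0}(u^*)\le h_{\gamma(H(u^*))}(u^*)$ (and it is an identity when $\gamma=1$, where $p_1=1$); taking the infimum over $u^*\in I_{a,\delta}$ and over the orderings $\sigma\in\mathcal S(N)$, as at the end of Lemma~\ref{lem::E_L}, then yields the stated bound. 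Besides this relabelling, the delicate point is to keep every constant of the form $c^n$ so that $c_{3,11}(n)$ has the correct $n$-dependence, which requires Stirling's formula together with the choice $c_0(\delta)\le 1/n$ used in the remark following Lemma~\ref{lem::E_L}.
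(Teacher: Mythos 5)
Your proposal is correct and follows essentially the same route as the paper's own proof: the GH80 increment-moment formula for even $n$, the $\kappa$-H\"older bound on the oscillatory factors routed over the $d$ coordinates, reduction to the $Z_l$ via (\ref{lem::inequ_X_Y_Z}), generalized H\"older with the exponents of Lemma \ref{rho_p_q_relation}, loading the entire weight onto the single direction $l_0$ supplied by (\ref{rho_p_q_relation_result3}) (Lemma \ref{lem::bound_increm} there, Remark \ref{cor_Lemma} on the other directions), integration via \cite[Lemma 2.11]{MWX08} after the $c_0(\delta)$ perturbation of (\ref{up_bound_u_pi_by_u_star}), and the increasing relabelling forcing $h_{l_0}(u^*)\le h_{\gamma(H(u^*))}(u^*)$ before taking the infimum over $u^*$ and $\sigma$. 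The only cosmetic differences are that the paper disposes of odd $n$ by a Cauchy--Schwarz reduction to even moments rather than declaring them unneeded, and that no $c^n$-type control of the constant is actually required for this lemma (any finite $c_{3,11}(n)$ suffices, since $n$ is fixed in the Kolmogorov argument).
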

Now we are ready to prove Theorem \ref{thm::joint_con}.
\begin{proof} [Proof of Theorem \ref{thm::joint_con}]
Let $I=[\epsilon,T]^N$ and $d < \inf_{v\in I}\sum_{l=1}^N {1}/{h_l(v)}$. It follows from Lemma \ref{lem::E_L_bound} 
and the multiparameter version of Kolmogorov's continuity theorem in \cite{K02} that for every $I_{a,\delta}\subseteq I$, 
the LMSS  $\{X^{H(u)}(u)\}_{u\in\mathbb{R}_{+}^N}$ has almost surely a local time $L(x, I_{a,\delta})$ that is continuous 
for all $x\in\mathbb{R}^d$.

To prove the joint continuity, observe from Lemma \ref{ineq:triagnle} that for all $x,y\in\mathbb R^d$ and $s,t\in I$ such that $|s-t|>0$ small 
enough, we have for $n\ge1$,
\begin{equation}\footnotesize
\label{joint_conti_E_L}
\begin{split}
&\mathbb{E} [|L(x, I_{a,s-a}) - L(y,I_{a,t-a})|^n]\\
& \leq2^{n-1}\left(\mathbb{E} |L(x, I_{a,s-a}) - L(x,I_{a,t-a})|^n+\mathbb{E} |L(x, I_{a,t-a}) - L(y,I_{a,t-a})|^n\right).
\end{split}
\end{equation}
The term $L(x, I_{a,s-a}) - L(x,I_{a,t-a})$ in 
(\ref{joint_conti_E_L}) can be rewritten as a sum of a finite number (which only depends on $N$) of terms of $L(x,I_j )$, 
where each $I_j$ is a closed subinterval of $I$ satisfying $\lambda_N(I_j)\le c|s-t|$, with $c>0$ not depending on $s,t$. 
Then for $|s-t|$ small enough, we apply (\ref{eqn::E_L_x_T_bound_cont_spec}) to bound it as
$
\mathbb{E}|L(x,I_j )|^n\leq c_{3,12}(j,n)\left(\lambda_N(I_{j})\right)^{n\nu_1}\leq c_{3,13}(j,n) |s-t|^{n\nu_1}$, 
where $c_{3,12}(j,n),\,c_{3,13}(j,n)>0$ are constants that do not depend on the 
edge lengths of $I_j$ and $\nu_1\in(0,1).$ Hence the first
 term in (\ref{joint_conti_E_L}) can be bounded as
 $
\mathbb{E} |L(x, I_{a,s-a}) - L(x,I_{a,t-a})|^n\leq c_{3,14}(n) |s-t|^{n\nu_1}.
$ 
On the other hand, the difference $L(x, I_{a,t-a}) - L(y,I_{a,t-a})$ in (\ref{joint_conti_E_L}) can be rewritten as 
a sum of a finite number of terms of $L(x, I_{j}) - L(y,I_{j})$, where each $I_j$ is a closed subinterval of $I_{a,t-a}$ 
satisfying $\lambda_N(I_j)$ is small enough. Then each term can be bounded by Lemma \ref{lem::E_L_bound} as
 $
\mathbb{E} |L(x, I_{j}) - L(y,I_{j})|^n\leq c_{3,15}(n) |x - y|^{n\nu_2},
$ 
where $\nu_2\in(0,1)$. Therefore, there exist $\nu\in(0,1)$ and $c_{3,16}(n)>0$ such that (\ref{joint_conti_E_L}) yields 
 $
\mathbb{E} |L(x, I_{a,s-a}) - L(y,I_{a,t-a})|^n\leq c_{3,16}(n)\left(|x - y|+|s-t|\right)^{n\nu}$.
Again by the multiparameter version of Kolmogorov's continuity theorem, the joint continuity
of the local times on $I$ holds. The proof is complete.
\end{proof}

\section{Local H\"older condition for the local times}
\label{sec:holder}

For any fixed $x\in\mathbb{R}^d$, let $L(x,\bullet)$ be the local time of the $(N,d)$-LMSS $\{X^{H(u)}(u)\}_{u\in\mathbb{R}_{+}^N}$ 
at $x$. When the local time is jointly continuous, $L(x,\bullet)$ can be extended to be a measure supported by the level set 
$\Gamma_x = \{ u\in\mathbb{R}_{+}^N: \, X^{H(u)}(u)  = x\}$. Hence,  the following theorem on the local oscillation of 
$L(x,\bullet)$ is useful for studying the fractal properties of $\Gamma_x$. See, e.g., \cite{GH80,MWX08,X97, X09}.
Compared with  \cite[Theorem 4.3]{MWX08} for multifractional Brownian sheets,  the condition of our Theorem
\ref{thm::holder} is sharper, which can be applied to derive more precise information on the Hausdorff measure of $\Gamma_x$.
A similar result for linear multifractional stable sheets was stated in \cite[Theorem 3.3]{shen2020local} and it
was claimed that it would follow from their Lemma 3.6. As we mentioned earlier, because the dependence of $C$ on $n$ in  
\cite[Equation (3.22)]{shen2020local} is not described, Lemma 3.6 in \cite{shen2020local} is not strong enough for determining
the $\log(\log(r^{-1}))$-factor in their Theorem 3.3.  

\begin{theorem}
\label{thm::holder}
Assume $\alpha\in(0,2]$. Let $I=[\epsilon,T]^N$ and  $d<\inf_{v\in I}\sum_{l=1}^N {1}/{h_l(v)}$.
There exists a constant $c_{4,1} > 0$ such that with probability 1, 
\begin{equation}\footnotesize
\label{limsup_L_over_phi_bound}
\limsup_{r \rightarrow 0} \frac{L(x,U(t,r))}{\varphi_t(r)} \leq c_{4,1},~ \ \mbox{ for $L(x,\bullet)$-almost all $t \in I$,}
\end{equation}
where $U(t,r)$ is the open ball in $I$ with center $t\in I$ and radius $r>0$, and
the scaling function $
\varphi_t(r):=r^{\beta(H(t))}\big(\log(\log(r^{-1}))\big)^{N-\beta(H(t))},~\mbox{ for }~0<r<e^{-1},
$ 
with $\beta(H(t))$ being defined in (\ref{def_beta_gamma_v_v}).
\end{theorem}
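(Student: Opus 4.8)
The plan is to establish \eqref{limsup_L_over_phi_bound} first along the geometric sequence $r_k=2^{-k}$ and then to pass to $L(x,\bullet)$-almost every $t$ by a Borel--Cantelli argument carried out \emph{against the local time measure} $L(x,\bullet)$ rather than against Lebesgue measure; this is exactly what circumvents the fatal $r^{-N}$ loss coming from the number of balls of radius $r$ needed to cover $I$. Fix $x\in\mathbb{R}^d$. By Theorem \ref{thm::joint_con}, $L(x,\bullet)$ is almost surely a finite Borel measure on $I$. For a large constant $c_{4,1}$ set $B_k:=\{t\in I:\ L(x,U(t,r_k))\ge c_{4,1}\varphi_t(r_k)\}$. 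If I can prove $\sum_k\mathbb{E}[L(x,B_k)]<\infty$, then $\sum_k L(x,B_k)<\infty$ almost surely, whence $L(x,\limsup_k B_k)=0$; so for $L(x,\bullet)$-almost every $t$ one has $L(x,U(t,r_k))<c_{4,1}\varphi_t(r_k)$ for all large $k$. A monotonicity interpolation over $r\in(r_{k+1},r_k)$, using $U(t,r)\subseteq U(t,r_k)$ and $\varphi_t(r_k)/\varphi_t(r_{k+1})\le C$, then upgrades this to the full $\limsup_{r\to0}$ statement with $c_{4,1}$ replaced by $Cc_{4,1}$.

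The crux is the following \emph{anchored} moment bound: for every $n\ge1$ and all small $r$,
\[
\mathbb{E}\Big[\int_I L\big(x,U(t,r)\big)^{n}\,L(x,\mathrm dt)\Big]\ \le\ c_{3,9}^{\,n}\int_I (n!)^{\,N-\beta(H(t))}\,r^{\,n\beta(H(t))}\,\Theta_t\,\mathrm dt,
\]
where $\Theta_t$ is the one--point density of $X^{H(t)}(t)$ at $x$, so that $\int_I\Theta_t\,\mathrm dt=\mathbb{E}[L(x,I)]<\infty$. I would derive this by writing the left-hand side, via the moment formula \cite[(25.5)]{GH80} (cf.\ \eqref{eqn::E_L}), as the $(n{+}1)$-point occupation integral in which the points $s_1,\dots,s_n$ are tied to $U(t,r)$ while the anchor $t$ ranges freely over $I$, and then bounding the inner $(s_1,\dots,s_n)$-integral by exactly the machinery behind Lemma \ref{lem::E_L}: the decomposition \eqref{lem::inequ_X_Y_Z} into the independent fields $Z_l$, the generalized H\"older inequality, the sharp bound \eqref{bound_sumZ_simple} of Remark \ref{cor_Lemma}, and the exponents supplied by Lemma \ref{rho_p_q_relation}. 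Two features are decisive. First, the free anchor contributes only the density $\Theta_t$ and \emph{one fewer} power of $r^{\beta}$ than $\mathbb{E}[L(x,I_{a,\delta})^{n+1}]$ would, which is precisely what removes the $r^{-N}$ counting factor. Second, to keep the constant \emph{geometric} in $n$ I invoke the refinement noted after Lemma \ref{lem::E_L}: once the modulus $c_0(\delta)$ of \eqref{c_0_delta} obeys $c_0(r)\le1/n$ one has $c_{3,8}(n)\le c_{3,9}^{\,n}$, while the within-ball variation of the exponent produces only a factor $r^{-nc_0(r)}$ whose logarithm $nc_0(r)\log(1/r)\to0$, so that $\beta(H(t))$ may be frozen at the anchor throughout.

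Applying Chebyshev's inequality inside the integral, together with $\varphi_t(r_k)^{-n}=r_k^{-n\beta(H(t))}(\log\log r_k^{-1})^{-n(N-\beta(H(t)))}$, the powers of $r_k$ cancel pointwise in $t$ and yield
\[
\mathbb{E}[L(x,B_k)]\ \le\ c_{4,1}^{-n}\,c_{3,9}^{\,n}\,\mathbb{E}[L(x,I)]\;\sup_{t\in I}\Big\{(n!)\,(\log\log r_k^{-1})^{-n}\Big\}^{\,N-\beta(H(t))}.
\]
Here the iterated logarithm earns its keep: taking the order $n=n_k\asymp\log\log r_k^{-1}$, with a fixed proportionality constant $a\in(0,e)$ (which guarantees $c_0(r_k)\le1/n_k$, since $c_0(r_k)$ is a positive power of $r_k$, and makes the bracket strictly less than $1$ by Stirling), turns the right-hand side into $\mathbb{E}[L(x,I)]\,q^{\,n_k}$ for some $q=q(c_{4,1})<1$ once $c_{4,1}$ is large. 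Since $n_k\asymp\log k$, this gives $q^{\,n_k}\asymp k^{-a\log(1/q)}$, which is summable provided $c_{4,1}$ is chosen so large that $a\log(1/q)>1$. Hence $\sum_k\mathbb{E}[L(x,B_k)]<\infty$, completing the Borel--Cantelli step of the first paragraph.

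I expect the principal difficulty to be the anchored estimate of the second paragraph, and inside it the quantitative control of the constant. Keeping $\beta(H(t))$ \emph{exactly} at the anchor—rather than degrading it to a supremum of $\beta(H(\bullet))$ over $U(t,r)$—is what makes the scaling function $\varphi_t$ sharp, and it forces a delicate balance between the order $n$ and the local oscillation $c_0(r)$ of the Hurst index: one needs simultaneously $c_0(r)\le1/n$ (to retain the constant $c_{3,9}^{\,n}$) and $nc_0(r)\log(1/r)\to0$ (to freeze the exponent). This is exactly why the order must be coupled to the scale through $n\asymp\log\log(1/r)$, and why the factor $(\log\log r^{-1})^{N-\beta(H(t))}$, rather than a pure power, is the correct companion of $r^{\beta(H(t))}$ in $\varphi_t$.
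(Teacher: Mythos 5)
Your proposal is correct and follows essentially the same route as the paper's own proof: the paper likewise bounds the anchored moments $\mathbb{E}\int_I \left(f_m(t)\right)^n L(x,\ud t)$ with $f_m(t)=L(x,U(t,2^{-m}))$ via the $(n+1)$-point formula \eqref{E_int_f_m_n_L}, the $Z_l$-decomposition, generalized H\"older, and the geometric constant of Remark \ref{cor_Lemma}, couples the order to the scale through $n=[\log m]\asymp\log\log(1/r)$ so that the Hurst-oscillation factor \eqref{bound_n!} stays bounded, runs Borel--Cantelli against the measure $\mu_\omega=L(x,\bullet)$, and finishes by dyadic interpolation. The only substantive deviation is your claim that the free anchor contributes the one-point density $\Theta_t$: the machinery you cite does not factor the joint $(n+1)$-point density that way (the paper simply bounds $|e^{-i\langle x,u^j\rangle}|\le 1$ and lets the anchor contribute a constant of order $(T-\epsilon)^N$), but since you only use $\int_I\Theta_t\,\ud t<\infty$, replacing $\Theta_t$ by a constant is harmless; likewise your pointwise freezing of $\beta(H(t))$ at the anchor is what the paper obtains via the mean value theorem in \eqref{bound_I_pi} together with the continuity of $H(\bullet)$.
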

\begin{proof}
For every integer $k>0$, define the random measure $L_k(x,\bullet)$ on the Borel subset $C$ of $I$ to be
\begin{equation}\footnotesize
\label{def_L_k_x_C}
L_k(x,C):=(2\pi)^{-d}\int_C(2\pi k)^{d/2}e^{-\frac{k|X^{H(t)}(t)-x|^2}{2}}\ud t =(2\pi)^{-d}\int_C\int_{\mathbb{R}^d}
e^{-\frac{|\xi|^2}{2k}+i\big\langle\xi,X^{H(t)}(t)-x\big\rangle}\ud \xi\ud t.
\end{equation}
According to \cite[Theorem 6.4]{GH80}, the local times have a measurable modification that satisfies the occupation density formula:
for any Borel function $g(t,x)\geq 0$ on $(t,x)\in I\times \mathbb{R}^d$,
\begin{equation}\footnotesize
\label{occupation_density_formula}
\int_I g(t,X^{H(t)}(t)) \ud t = \int_{\mathbb{R}^d} \int_I g(t,x) L(x,\ud t) \ud x.
\end{equation}
Based on (\ref{occupation_density_formula}), we can obtain
\begin{equation}\footnotesize
\label{occupation_density_formula_appl}
\int_Ce^{i\big\langle\xi,X^{H(t)}(t)\big\rangle}\ud t=\int_{\mathbb{R}^d}e^{i\left\langle\xi,x\right\rangle}L(x,C)\ud x.
\end{equation}
Since the right hand-side of (\ref{occupation_density_formula_appl}) is the characteristic function of a random 
variable with density $L(x,C)$, by the inversion theorem we can derive
$
L(x,C)=(2\pi)^{-d}\int_C\int_{\mathbb{R}^d}e^{i\langle\xi,X^{H(t)}(t)-x\rangle}\ud \xi\ud t.
$ 
Now by the continuity of the mapping $y\mapsto L(y,C)$, we have 
$
L_k(x,C)\xrightarrow[k\to\infty]{a.s.}  L(x,C)$ for every Borel set $C\subseteq I$. 
Define $f_m(t):=L(x,U(t,2^{-m}))$, $m\ge1$. From the proof of Theorem \ref{thm::joint_con},
we can see that almost surely the functions $f_m$'s are continuous and bounded. Hence, by the Lebesgue's dominated
convergence theorem,  for all integers $m,n\geq1$,
\begin{equation}\footnotesize
\label{lim_int_f_m_L_k}
    \int_I\left(f_m(t)\right)^n L_k(x,\ud t)\xrightarrow[k\to\infty]{a.s.}\int_I\left(f_m(t)\right)^n L(x,\ud t).
\end{equation}

It results from (\ref{def_L_k_x_C}), (\ref{lim_int_f_m_L_k}) and the proof of Proposition 3.1 in \cite{Pitt1978}  that for
each integer $n\geq1$,
\begin{equation}\footnotesize
\label{E_int_f_m_n_L}
\begin{split}
&\mathbb{E}\int_I\left(f_m(t)\right)^n L(x,\ud t)\\
&=\frac{1}{(2\pi)^{(n+1)d}}\int_I\int_{U(s^{n+1},2^{-m})^n}\int_{\mathbb{R}^{(n+1)d}}
e^{-i\sum_{j=1}^{n+1}\langle x,u^j\rangle} \mathbb{E}e^{i\sum_{j=1}^{n+1}\langle u^j,X^{H(s^j)}(s^j)\rangle}\ud \overline{u}\ud \overline{s}\\
&\leq\frac{1}{(2\pi)^{(n+1)d}}\int_I\int_{U(s^{n+1},2^{-m})^n}\prod_{k=1}^dQ_k(\overline{s})\ud \overline{s},
\end{split}
\end{equation}
where 
 $
Q_k(\overline{s}):=\int_{\mathbb{R}^{n+1}}e^{-\|\sum_{j=1}^{n+1} u_k^jX_1^{H(s^j)}(s^j)\|_\alpha^\alpha}\ud u_k,
$ with 
$\overline{s}:=(s^1,\ldots,s^{n+1})\in\mathbb{R}^{(n+1)N}$ and $\overline{u}:=(u^1,\ldots,u^{n+1})\in\mathbb{R}^{(n+1)d}$.
In the following, we provide an upper bound
of the right-hand side of (\ref{E_int_f_m_n_L}) for sufficiently large $m$, by modifying the proof of (\ref{bound_EL_1}) in
Lemma \ref{lem::E_L}. For consistency, we use the same notations as in the proof of Lemma \ref{lem::E_L}.

For $l=1,\ldots,N$, denote $U_l(s^{n+1},2^{-m}):=[s_l^{n+1}-2^{-m},s_l^{n+1}+2^{-m}]$ as the projection of   $U(s^{n+1},2^{-m})$ onto the $l$th dimension. For each $l=1,\ldots,N$ and each permutation  $\pi_l\in\mathcal S(n+1)$, define
\begin{equation}\footnotesize
\label{def:pi_t}
  \Pi_l:=\Big\{(s_l^1,\ldots,s_l^{n+1})\in \big(U_l(s^{n+1},2^{-m})\big)^{n}\times[\epsilon,T]:~ s_l^{\pi_l(1)} \leq \ldots \leq  s_l^{\pi_l(n+1)} \Big\}\neq \emptyset,
\end{equation}
with the convention that $s_l^{\pi_l(0)}=s_l^0:=0$. 
For each $l=1,\ldots,N$, let $j_n\in\{1,\ldots,n+1\}$ be the unique integer such that  $\pi_l(j_n)=n+1$, we then define
\begin{equation}\footnotesize
\label{def:pi_+}
\begin{split}
 &\Pi_l^-:=\Big\{(s_l^{\pi_l(1)},\ldots,s_l^{\pi_l(j_n-1)})\in \big(U_l(s^{n+1},2^{-m})\big)^{j_n-1}:~ s_l^{\pi_l(1)} \leq \ldots \leq  s_l^{\pi_l(j_n-1)}\le s_l^{n+1} \Big\};\\
 &\Pi_l^+:=\Big\{(s_l^{\pi_l(j_n+1)},\ldots,s_l^{\pi_l(n+1)})\in \big(U_l(s^{n+1},2^{-m})\big)^{n-j_n+1}:~ s_l^{n+1}\le s_l^{\pi_l(j_n+1)} \leq \ldots \leq  s_l^{\pi_l(n+1)}\Big\}.
  \end{split}
\end{equation}
It results from \eqref{eqn::J_k}, \eqref{eqn::J_k_bound}, and Remark \ref{cor_Lemma} that
\begin{equation}\footnotesize
\label{Q_k_s_bar_upper_bound}
Q_k(\overline{s})\leq c_{4,2}^n\prod_{l=1}^{\gamma(H(s^{n+1}))} \prod_{j=1}^{n+1} (s_l^{\pi_l(j)} - s_l^{\pi_l(j-1)})^{- h_l(s^{\pi_l(j)})/p_l(H(s^{n+1}))},
\end{equation}
where $c_{4,2}>0$ does not depend on $n$, $\overline{s}$, $\gamma(H(s^{n+1}))$ and $p_1(H(s^{n+1}),\ldots,p_{\gamma(H(s^{n+1}))}(H(s^{n+1}))\ge1$ satisfy $\sum_{l=1}^{\gamma(H(s^{n+1}))}1/p_l(H(s^{n+1}))=1$.  
Combining (\ref{E_int_f_m_n_L}) - (\ref{Q_k_s_bar_upper_bound}),  we have
\begin{equation}\footnotesize
\begin{split}
\label{E_int_f_m_n_L_cont}
&\mathbb{E}\int_I\left[f_m(t)\right]^n L(x,\ud t)\\
&\leq c_{4,3}^n\sum_{\pi_1,\ldots,\pi_N\in\mathcal S(n)}
\int_{\Pi_1\times\ldots\times\Pi_N}\prod_{l=1}^{\gamma(H(s^{n+1}))} \prod_{j=1}^{n+1} (s_l^{\pi_l(j)} - s_l^{\pi_l(j-1)})^{-\frac{ dh_l(s^{\pi_l(j)})}{p_l(H(s^{n+1}))}}\ud \overline s\\
&\leq c_{4,3}^n\sum_{\substack{\pi_1,\ldots,\pi_{\gamma(H(s^{n+1}))}\\\in\mathcal S(n)}}\int_{[\epsilon,T]^N}(2^{n-mn})^{N-\gamma(H(s^{n+1}))}\prod_{l=1}^{\gamma(H(s^{n+1}))}\\
&\hspace{0.5cm}\times\bigg\{\int_{\Pi_l^-}
\prod_{j=1}^{j_n} (s_l^{\pi_l(j)} - s_l^{\pi_l(j-1)})^{- \frac{dh_l(s^{\pi_l(j)})}{p_l(H(s^{n+1}))}}\ud s_l^{\pi_l(1)}\ldots\ud s_l^{\pi_l(j_n-1)}\\
&\hspace{1cm}\times \int_{\Pi_l^+}
\prod_{j=j_n+1}^{n+1} (s_l^{\pi_l(j)} - s_l^{\pi_l(j-1)})^{-\frac{ dh_l(s^{\pi_l(j)})}{p_l(H(s^{n+1}))}}\ud s_l^{\pi_l(j_n+1)}\ldots\ud s_l^{\pi_l(n+1)}\bigg\} \ud s_1^{n+1}\ldots\ud s_N^{n+1},
\end{split}
\end{equation}
\normalsize
where $c_{4,3}=(c_{4,2}/(2\pi))^{d}$. Similar to (\ref{eqn::E_L_x_T_bound}) - (\ref{eqn::I_pi_l_bound_cont}),
for sufficiently large $m$ and letting $\delta^{(m)}=2^{1-m}$,  we obtain
\begin{equation}\footnotesize
\label{bound_I_pi_+}
\begin{split}
&\int_{\Pi_l^+}
\prod_{j=j_n+1}^{n+1} (s_l^{\pi_l(j)} - s_l^{\pi_l(j-1)})^{-\frac{ dh_l(s^{\pi_l(j)})}{p_l(H(s^{n+1}))}}\ud s_l^{\pi_l(j_n+1)}\ldots\ud s_l^{\pi_l(n+1)}\\
& \le c_{4,4}^{n-j_n+1}(l)((n-j_n+1)!)^{\frac{d(h_l(s^{n+1})+c_0(\delta^{(m)}))}{p_l(H(s^{n+1}))} -1}\\
&\hspace{3cm}\times(2^{-m})^{(n-j_n+1)
\big(1-(1-\frac{1}{n-j_n+1})\frac{d(h_l(s^{n+1})+c_0(\delta^{(m)}))}{p_l(H(s^{n+1}))}\big)}
\end{split}
\end{equation}
and
\begin{equation}
\label{bound_I_pi_-}
\begin{split}
&\int_{\Pi_l^-}
\prod_{j=1}^{j_n} (s_l^{\pi_l(j)} - s_l^{\pi_l(j-1)})^{-\frac{ dh_l(s^{\pi_l(j)})}{p_l(H(s^{n+1}))}}\ud s_l^{\pi_l(1)}\ldots\ud s_l^{\pi_l(j_n-1)}\\
&\le c_{4,5}^{j_n-1}(l)((j_n-1)!)^{\frac{d(h_l(s^{n+1})+c_0(\delta^{(m)}))}{p_l(H(s^{n+1}))} -1}(2^{-m})^{j_n-1\big(1-(1-\frac{1}{j_n-1})
\frac{d(h_l(s^{n+1})+c_0(\delta^{(m)}))}{p_l(H(s^{n+1}))}\big)}.
\end{split}
\end{equation}
We then use the bounds in (\ref{bound_I_pi_+}) and (\ref{bound_I_pi_-}) and the mean value theorem to obtain
\begin{equation}\footnotesize
\label{bound_I_pi}
\begin{split}
&\int_{[\epsilon,T]^N}\prod_{l=1}^{\gamma(H(s^{n+1}))}\bigg\{\int_{\Pi_l^-}
\prod_{j=1}^{j_n} (s_l^{\pi_l(j)} - s_l^{\pi_l(j-1)})^{-\frac{ dh_l(s^{\pi_l(j)})}{p_l(H(s^{n+1}))}}\ud s_l^{\pi_l(1)}\ldots\ud s_l^{\pi_l(j_n-1)}\\
&\hspace{0.5cm}\times \int_{\Pi_l^+}
\prod_{j=j_n+1}^{n+1} (s_l^{\pi_l(j)} - s_l^{\pi_l(j-1)})^{-\frac{ dh_l(s^{\pi_l(j)})}{p_l(H(s^{n+1}))}}\ud s_l^{\pi_l(j_n+1)}\ldots\ud s_l^{\pi_l(n+1)}\bigg\} \ud s_1^{n+1}\ldots\ud s_N^{n+1}\\
&\leq (T-\epsilon)^{N}\prod_{l=1}^{\gamma(H(u^*))}\Bigg\{c_{4,6}^{n}(l)(n!)^{\sum\limits_{l=1}^{\gamma(H(u^*))}\frac{d(h_l(u^*)+c_0(\delta^{(m)}))}{p_l(H(u^*))} -\gamma(H(u^*))}\\
&\hspace{2.5cm}\times(2^{-m})^{n\big(\gamma(H(u^*))-(1-\frac{1}{n})\sum\limits_{l=1}^{\gamma(H(u^*))}
\frac{d(h_l(u^*)+c_0(\delta^{(m)}))}{p_l(H(u^*))}\big)}\Bigg\},
\end{split}
\end{equation}
where $u^*\in I$ is some element depending on $m$, $n$.

We now take $n=[\log m]$, where $[\bullet]$ denotes the integer part. With this choice, the terms depending on 
$c_0(\delta^{(m)})$ in the right-hand side of (\ref{bound_I_pi}) could be upper bounded by a constant which does not depend on $m$:
\begin{equation}\footnotesize
    \label{bound_n!}
    c_{4,7}=\sup_{m\ge1}\Big\{(n!2^{mn})^{dc_0(\delta^{(m)}) }\Big\}<+\infty.
\end{equation}
The above supremum exists, thanks to the fact that
\footnotesize
$$
\lim_{m\to\infty}(n!2^{mn})^{c_0(\delta^{(m)})}=\lim_{m\to\infty}(n!2^{e^nn})^{2^{(1-e^n)}}=1.
$$
\normalsize
It follows from (\ref{E_int_f_m_n_L_cont}), (\ref{bound_I_pi}),  (\ref{bound_n!}) and the similar arguments to (\ref{eqn::I_pi_l_bound_cont}) - (\ref{p_2}) that
\begin{equation}\footnotesize
\label{E_int_f_m_n_L_bound}
    \mathbb{E}\int_I\left(f_m(t)\right)^n L(x,\ud t)\leq c_{4,8}^n (n!)^{N-\beta(H(u^*))}2^{-mn\beta(H(u^*))},
\end{equation}
where 
\footnotesize
\begin{equation*}
    c_{4,8}=\max\bigg\{2^{2N}c_{4,3}c_{4,7} (T-\epsilon)^{N} \sup_{ k\in\{1,\ldots,N\} }\Big\{\prod_{l=1}^kc_{4,6}(l)\Big\},1\bigg\}
\end{equation*}
\normalsize
does not depend on $n$. We again point out that obtaining the scaling constant $c_{4,8}^n$ in (\ref{E_int_f_m_n_L_bound}) is crucial for deriving the value of $\tau$ below. 
\normalsize

Let $\tau>0$ be a constant, the value of which will be determined later. We consider the random set
\footnotesize$$
I_m=\big\{t\in I:f_m(t)\geq \tau\varphi_{u^*}(2^{-m})\big\}.
$$\normalsize
Denote by $\mu_\omega$ the restriction of the random measure $L(x,\bullet)$ to $I$, that is, $\mu_\omega(E)
=L(x,E\cap I)$ for all Borel set $E\subseteq \mathbb{R}_+^N$. Since $n=[\log m]$, following the same approach in \citep[proof of Theorem 4.3]{MWX08} and
applying the crucial inequality (\ref{E_int_f_m_n_L_bound}) and Stirling's formula, we have
\footnotesize
\begin{equation*}
\begin{split}
\mathbb{E}\mu_\omega(I_m)&\leq \mathbb{E}\int_I L(x,\ud t)\leq\frac{\mathbb{E}\int_I \left(f_m(t)\right)^n L(x,\ud t)}
{\left(\tau\varphi_{u^*}(2^{-m})\right)^n} \leq\frac{c_{4,8}^n (n!)^{N-\beta(H(u^*))}2^{-mn\beta(H(u^*))}}
{\tau^n2^{-mn\beta(H(u^*))}(\log m)^{n(N-\beta(H(u^*)))}}\leq m^{-2},
\end{split}
\end{equation*}\normalsize
provided $\tau>0$ is chosen large enough, say, $\tau\geq c_{4,9}:=c_{4,8} e^2$. This implies
\footnotesize$$
\mathbb{E}\bigg[\sum_{m=1}^\infty\mu_\omega(I_m)\bigg]\le \sum_{m=1}^\infty m^{-2}<+\infty.
$$\normalsize
Therefore by the Borel-Cantelli lemma, with probability 1 for $\mu_\omega$-almost all $t\in I$,
\footnotesize$$
\limsup_{m\to\infty}\frac{L(x,U(t,2^{-m}))}{\varphi_{u^*}(2^{-m})}\leq c_{4,9}.
$$\normalsize
Thanks to the continuity of $h_l(\bullet)~(l=1,\ldots,N)$, it can be verified that 
\footnotesize$$
 \gamma(H(u^*))\xrightarrow[u^*\to t]{}\gamma(H(t))~~\mbox{and}~~ \beta(H(u^*))\xrightarrow[u^*\to t]{}\beta(H(t)).
$$\normalsize
Since $\gamma(H(u^*))$ and $\gamma(H(t))$ are integer-valued, we have 
$\gamma(H(u^*)) = \gamma(H(t))$ for all $m$ large enough.  By this and Condition ${\mathcal H}_2$, one can verify that there exists a constant $c_{4,10}>0$ such that $\varphi_{u^*}(2^{-m})\leq c_{4,10}\varphi_{t}(2^{-m})$ for all $m>0$.
Therefore,
\footnotesize$$
\limsup_{m\to\infty}\frac{L(x,U(t,2^{-m}))}{\varphi_{t}(2^{-m})}\leq c_{4,10} \limsup_{m\to\infty}\frac{L(x,U(t,2^{-m}))}{\varphi_{u^*}(2^{-m})}\le c_{4,11},
$$\normalsize
where $c_{4,11}=c_{4,9}c_{4,10}.$ Hence, for any $r>0$ small enough, there exists an integer $m$ such that $2^{-m}\leq r<2^{-m+1}$ and
since $\varphi_t(\bullet)$ is increasing in the neighborhood of $0$, we have
\footnotesize$$
\limsup\limits_{r\to0}\frac{L(x,U(t,{r}))}{\varphi_{t}({r})} \leq \limsup\limits_{m\to\infty}\frac{L(x,U(t,2^{-m+1}))}{\varphi_{t}(2^{-m})}
\leq c_{4,11}\sup\limits_{m\ge 1}\left\{\frac{\varphi_{t}(2^{-m+1})}{\varphi_{t}(2^{-m})}\right\}<+\infty.
$$\normalsize
This proves (\ref{limsup_L_over_phi_bound}).
\end{proof}

\begin{appendix}
\section{Proof of Lemma \ref{int_exp}}
\label{proof_Lemma_217}
For $i=1,\ldots,n$, define $
y_i:=\sum_{j=i}^na_{i,j}x_j$. Setting $a_{i,j}=0$ if $i>j$. 
Since $(a_{i,j})_{i,j=1,\ldots,n}$ is an upper triangle matrix,  we thus obtain, for $i=1,\ldots,n$,
$x_i=\sum_{j=i}^nu_{i,j}y_j$, 
for some $u_{i,j}\in\mathbb R$, $j=1,\ldots,n$, which only depend on $a_{i,j}$'s and satisfy $u_{i,i}=a_{i,i}^{-1}$, 
$u_{i,j}=0$ for $i>j$. Therefore we can write:
\begin{equation}\footnotesize
\label{step1}
\int_{\mathbb R^{n}}\Big(\prod_{i=1}^n|x_i|^{b_i}\Big)e^{-\sum\limits_{i=1}^{n}|\sum\limits_{j=i}^na_{i,j}x_j|^\alpha}\ud x
= \Big(\prod_{i=1}^na_{i,i}\Big)^{-1}\int_{\mathbb R^n}\Big(\prod_{i=1}^n\Big|\sum_{j=1}^nu_{i,j}y_j\Big|^{b_i}\Big)e^{-\sum\limits_{i=1}^n|y_i|^\alpha}\ud y,
\end{equation}
where $x=(x_1,\ldots,x_n)$ and $y=(y_1,\ldots,y_n)$. 
Using the inequality (\ref{ineq_tri}) and the multinomial formula, we obtain
\begin{equation}\footnotesize
\begin{split}
\label{step2}
&\prod_{i=1}^n\Big|\sum_{j=1}^nu_{i,j}y_j\Big|^{b_i}=\prod_{\substack{i\in\{1,\ldots,n\}\\ b_i\ne 0}}\Big|\sum_{j=1}^nu_{i,j}y_j\Big|^{b_i}\le \prod_{\substack{i\in\{1,\ldots,n\}\\ b_i\ne 0}}(n^{b_i-1}\vee 1) \sum_{j=1}^n|u_{i,j}y_j|^{b_i}\\
&=\Big(\prod_{i=1}^n(n^{b_i-1}\vee 1)\Big)\sum_{j_1,\ldots,j_n\in\{1,\ldots,n\}}\prod_{\substack{i\in\{1,\ldots,n\}\\ b_i\ne 0}}|u_{i,j_i}y_{j_i}|^{b_{i}}.
\end{split}
\end{equation}
Therefore by using (\ref{step1}), 
(\ref{step2}) and (\ref{int_exp_simpl}) iteratively,
\footnotesize
\begin{equation*}
\begin{split}
&\int_{\mathbb R^{n}}\Big(\prod_{i=1}^n|x_i|^{b_i}\Big)e^{-\sum\limits_{i=1}^{n}|\sum\limits_{j=i}^na_{i,j}x_j|^\alpha}\ud x\le \Big(\prod_{i=1}^n(n^{b_i-1}\vee 1)\Big)\Big(\prod_{i=1}^na_{i,i}\Big)^{-1}\\
&\hspace{2cm}\times\sum_{j_1,\ldots,j_n\in\{1,\ldots,n\}}\Big(\prod_{\substack{i\in\{1,\ldots,n\}\\ b_i\ne 0}}|u_{i,j_i}|^{b_{i}}\Big)\int_{\mathbb R^n}\Big(\prod_{\substack{i\in\{1,\ldots,n\}\\ b_i\ne 0}}|y_{j_i}|^{b_i}\Big)e^{-\sum_{i=1}^n|y_i|^\alpha}\ud y\\
&\le c_{3,1}(n)\Big(\prod_{i=1}^na_{i,i}\Big)^{-1}\sum_{j_1,\ldots,j_n\in\{1,\ldots,n\}}\prod_{\substack{i\in\{1,\ldots,n\}\\ b_i\ne 0}}|u_{i,j_i}|^{b_{i}}=c_{3,1}(n)\Big(\prod_{i=1}^na_{i,i}\Big)^{-1}\prod_{\substack{i\in\{1,\ldots,n\}\\ b_i\ne 0}}\sum_{j=1}^n|u_{i,j}|^{b_i},
\end{split}
\end{equation*}
\normalsize
where
\footnotesize
$$
c_{3,1}(n)=\Big(\prod_{i=1}^n(n^{b_i-1}\vee 1)\Big)(\frac{2}{\alpha})^n\Big(\sup_{j_1,\ldots,j_n\in\{1,\ldots,n\}}\Gamma\big(\frac{1+\sum_{i= 1}^nb_{j_i}}{\alpha}\big)\Big)^{n}.
$$
\normalsize
Lemma \ref{int_exp} is proved.

\section{Proof of Lemma \ref{lem::bound_increm}}
\label{proof_Lemma}
The proof is based on an extension of the direct approach in  \cite{ARX07a} for linear fractional stable sheets.  
For $n\ge 1$, by the definition of $Z_l(\bullet)$ we can write $\sum_{j=1}^nZ_l(u^j)$ as sum of independent components and obtain the following:
\begin{equation}\footnotesize
    \label{decom:Z_l}
   \Big\|\sum_{j=1}^nx_jZ_l(u^j)\Big\|_{\alpha}^\alpha=\sum_{i=1}^n\int_{Q_l(u^i)\backslash Q_l(u^{i-1})}\Big|\sum_{j=i}^nx_jg^{H(u^j)}(u^j,r)\Big|^\alpha\ud r,
\end{equation}
where $Q_l(u^0):=\emptyset$ and $Q_l(u^i)\backslash Q_l(u^{i-1})=[0,\epsilon]^{l-1}\times (u_l^{i-1},u_l^i]\times [0,\epsilon]^{N-l}$.  
Using the definition of $g_l$ in (\ref{def:g_j}), for every $i\in\{1,\ldots,n\}$,
\begin{equation}\footnotesize
    \label{decomp_ind}
    \int_{Q_l(u^i)\backslash Q_l(u^{i-1})}\Big|\sum_{j=i}^nx_jg^{H(u^j)}(u^j,r)\Big|^\alpha\ud r=c_{H(1)}^{\alpha} \int_{Q_l(u^i)\backslash Q_l(u^{i-1})}\Big|\sum_{j=i}^nx_j\prod_{p=1}^N(u_p^j-r_p)^{h_p(u^j)-1/\alpha}\Big|^\alpha\ud r,
\end{equation}
where $r=(r_1,\ldots,r_N)$. 
Applying the following change of variables to (\ref{decomp_ind}):
\footnotesize
$$
r_l\longrightarrow u_l^{i-1}+(u_l^i-u_l^{i-1})(1-r_l),
$$
\normalsize
we obtain
\begin{equation}\footnotesize
 \label{change_var}
 \int_{Q_l(u^i)\backslash Q_l(u^{i-1})}\Big|\sum_{j=i}^nx_j\prod_{p=1}^N(u_p^j-r_p)^{h_p(u^j)-1/\alpha}\Big|^\alpha\ud r=\int_{S_l(1)}|F(u^i,x,r)|^\alpha \ud r,
 \end{equation}
where
\begin{equation}\footnotesize
\label{def:S_l}
S_l(1):=\left\{r\in[0,+\infty)^N:~0\le r_p\le \epsilon~\mbox{if}~p\ne l,~0<r_l\le 1 \right\}
\end{equation}
and
 \begin{equation}\footnotesize
 \label{def:F}
F(u^i,x,r):=\sum_{j=i}^nx_j(u_l^i-u_l^{i-1})^{1/\alpha}(u_l^j-u_l^{i-1}-(u_l^i-u_l^{i-1})(1-r_l))^{h_l(u^j)-1/\alpha}\prod_{p\ne l}(u_p^j-r_p)^{h_p(u^j)-1/\alpha}.
\end{equation}
Below we distinguish with 2 cases:  $1\le \alpha\le 2$ and  $0<\alpha<1$.\\
If $\alpha\in[1,2]$, it follows from  (\ref{change_var}), H\"older's inequality and (\ref{def:F}) that 
\begin{equation}\footnotesize
    \label{sum_Z_lower_bound_alpha_1}
     \int_{S_l(1)}|F(u^i,x,r)|^\alpha \ud r\ge c_{5,1}\Big|\int_{S_l(1)}F(u^i,x,r) \ud r\Big|^\alpha= c_{5,1}\Big|\sum_{j=i}^n\theta_{i,j}x_j\Big|^\alpha,
\end{equation}
where $c_{5,1}=\epsilon^{(N-1)(1-\alpha)}$ and
\begin{equation}\footnotesize
\label{def:theta_ij}
    \theta_{i,j}=(u_l^i-u_l^{i-1})^{1/\alpha}\int_{S_l(1)}(u_l^j-u_l^{i-1}-(u_l^i-u_l^{i-1})(1-r_l))^{h_l(u^j)-1/\alpha}\prod_{p\ne l}(u_p^j-r_p)^{h_p(u^j)-1/\alpha}\ud r.
\end{equation}
Combining (\ref{decom:Z_l}), (\ref{decomp_ind}), (\ref{change_var}), and (\ref{sum_Z_lower_bound_alpha_1}) we obtain
\begin{equation}\footnotesize
\label{bound_Z_lower_1}
\Big\|\sum_{j=1}^nx_jZ_l(u^j)\Big\|_\alpha^{\alpha}\ge c_{5,1}c_{H(1)}^{\alpha}\sum_{i=1}^n\Big|\sum_{j=i}^n\theta_{i,j}x_j\Big|^\alpha.
\end{equation}
(\ref{bound_Z_lower_1}) together with Lemma \ref{int_exp} and (\ref{def:theta_ij}) yields: 
 \begin{equation}\footnotesize
 \label{bound_sum_z_0}
 \begin{split}
 &\int_{\mathbb R^n}\Big(\prod_{i=1}^n|x_i|^{b_i}\Big)e^{-\| \sum_{j=1}^n x_j Z_l(u^j)\|_{\alpha}^{\alpha}}\ud x\le \int_{\mathbb R^n}\Big(\prod_{i=1}^n|x_i|^{b_i}\Big)e^{-c_{5,1}c_{H(1)}^{\alpha}\sum_{i=1}^n|\sum_{j=i}^n\theta_{i,j}x_j|^\alpha}\ud x\\
 &\le \big(c_{5,1}c_{H(1)}^{\alpha}\big)^{-(\sum_{i=1}^nb_i+n)/\alpha}c_{3,1}(n)\Big(\prod_{i=1}^n\theta_{i,i}^{-1}\Big)\sum_{j_1,\ldots,j_n\in\{1,\ldots,n\}}\prod_{\substack{i\in\{1,\ldots,n\}\\ b_i\ne 0}}|\xi_{i,j_i}|^{b_{i}},
 \end{split}
 \end{equation}
 where $c_{3,1}(n)$ is given in (\ref{c_n}); $(\xi_{i,j})_{i,j=1,\ldots,n}$ is the inverse matrix of $(\theta_{i,j})_{i,j=1,\ldots,n}$. Note that each $\xi_{i,j}$ has the representation
 \begin{equation}\footnotesize
 \label{xi_inv}
 \xi_{i,j}=p_{i,j}((\theta_{i,j})_{i,j=1,\ldots,n})\prod_{i=1}^n\theta_{i,i}^{-1},
 \end{equation}
 where $p_{i,j}((\theta_{i,j})_{i,j=1,\ldots,n})$ denotes the $(i,j)$-element of the adjugate of the matrix $(\theta_{i,j})_{i,j=1,\ldots,n}$ thus it is a polynomial of $u^1,\ldots,u^n$. It follows from (\ref{bound_sum_z_0}) and (\ref{xi_inv}) that
 \begin{equation}\footnotesize
 \label{bound_sum_z_1}
 \begin{split}
&\int_{\mathbb R^n}\Big(\prod_{i=1}^n|x_i|^{b_i}\Big)e^{-\| \sum_{j=1}^n x_j Z_l(u^j)\|_{\alpha}^{\alpha}}\ud x\\
&\le \big(c_{5,1}c_{H(1)}^{\alpha}\big)^{-\big(\sum\limits_{i=1}^nb_i+n\big)/\alpha}c_{3,1}(n)\Big(\prod_{i=1}^n\theta_{i,i}\Big)^{-\big(1+\sum\limits_{j=1}^nb_j\big)}\hspace{-0.5cm}\sum_{j_1,\ldots,j_n\in\{1,\ldots,n\}}\prod_{\substack{i\in\{1,\ldots,n\}\\ b_i\ne 0}}|p_{i,j_i}((\theta_{i,j_i})_{i,j_i=1,\ldots,n})|^{b_{i}}\\
&\le c_{5,2}(n)\prod_{i=1}^n|u_l^i-u_l^{i-1}|^{-h_l(u^i)(1+\sum_{j=1}^nb_j)},
\end{split}
 \end{equation}
 where
\begin{equation}\footnotesize
\label{def:c1n}
\begin{split}
 &c_{5,2}(n)=\big(c_{5,1}c_{H(1)}^{\alpha}\big)^{-(\sum_{j=1}^nb_j+n)/\alpha}c_{3,1}(n)\\
 &\hspace{1cm}\times\Big(\inf_{u\in[\epsilon,T]^N}\Big\{\int_{S_l(1)}r_l^{h_l(u)-1/\alpha}\prod_{p\ne l}(u_p-r_p)^{h_p(u)-1/\alpha}\ud r\Big\}\Big)^{-(1+\sum_{j=1}^nb_j)}\\
 &\hspace{2cm}\times \sup_{u^1,\ldots,u^n\in[\epsilon,T]^N}\Bigg\{\prod_{\substack{i\in\{1,\ldots,n\}\\ b_i\ne 0}}\sum_{j=1}^n|p_{i,j}((\theta_{i,j})_{i,j=1,\ldots,n})|^{b_{i}}\Bigg\}.
 \end{split}
 \end{equation}
 \normalsize
 Next we consider the case for $0< \alpha< 1$. Since the function $\phi(r)=e^{-\beta r}$ with $\beta>0$ is convex on $[0,+\infty)$, by using Jensen's inequality we have
 \begin{equation}\footnotesize
     \label{Jensen}
     \begin{split}
         &e^{-c_{H(1)}^{\alpha}\sum_{i=1}^n\int_{S_l(1)}|F(u^i,x,r)|^\alpha \ud r}\le\frac{1}{\epsilon^{N-1}}\int_{S_l(1)}e^{-\epsilon^{N-1}c_{H(1)}^{\alpha}\sum_{i=1}^n|F(u^i,x,r)|^\alpha} \ud r.
     \end{split}
 \end{equation}
 It results from (\ref{decom:Z_l}), (\ref{change_var}),  (\ref{Jensen}) and Fubini's theorem that
 \footnotesize
  \begin{equation*}
 \begin{split}
 &\int_{\mathbb R^n}\Big(\prod_{j=1}^n|x_j|^{b_j}\Big)e^{-\| \sum_{j=1}^n x_j Z_l(u^j)\|_{\alpha}^{\alpha}}\ud x= \int_{\mathbb R^n}\Big(\prod_{j=1}^n|x_j|^{b_j}\Big)e^{-c_{H(1)}^{\alpha}\sum_{i=1}^n\int_{S_l(1)}|F(u^i,x,r)|^\alpha \ud r}\ud x\\
 &\le\frac{1}{\epsilon^{N-1}}\int_{S_l(1)}\int_{\mathbb R^n}\Big(\prod_{j=1}^n|x_j|^{b_j}\Big)e^{-\epsilon^{N-1}c_{H(1)}^{\alpha}\sum_{i=1}^n|\sum_{j=i}^n\eta_{i,j}(r)x_j|^\alpha} \ud x\ud r,
 \end{split}
 \end{equation*}
 \normalsize
 where
 \begin{equation}\footnotesize
     \label{def:eta_ij}
     \eta_{i,j}(r)=(u_l^i-u_l^{i-1})^{1/\alpha}(u_l^j-u_l^{i-1}-(u_l^i-u_l^{i-1})(1-r_l))^{h_l(u^j)-1/\alpha}\prod_{p\ne l}(u_p^j-r_p)^{h_p(u^j)-1/\alpha}.
 \end{equation}
 Then similar to the way to obtain (\ref{bound_sum_z_1}), applying again Lemma \ref{int_exp} we get
 \begin{equation}\footnotesize
 \label{bound_sum_z_2}
\int_{\mathbb R^n}\Big(\prod_{j=1}^n|x_j|^{b_j}\Big)e^{-\| \sum_{j=1}^n x_j Z_l(u^j)\|_{\alpha}^{\alpha}}\ud x\le c_{5,3}(n)\prod_{i=1}^n|u_l^i-u_l^{i-1}|^{-h_l(u^i)(1+\sum_{j=1}^nb_j)},
 \end{equation}
 where
\begin{equation}\footnotesize
\label{def:c2n}
\begin{split}
 &c_{5,3}(n)=\epsilon^{(1-N)}\big(\epsilon^{(N-1)}c_{H(1)}^{\alpha}\big)^{-(\sum_{j=1}^nb_j+n)/\alpha}c_{3,1}(n)\\
 &\hspace{1cm}\times\Big(\sup_{u\in[\epsilon,T]^N}\Big\{\int_{S_l(1)}r_l^{1/\alpha-h_l(u)}\prod_{p\ne l}(u_p-r_p)^{1/\alpha-h_p(u)}\ud r\Big\}\Big)^{1+\sum_{j=1}^nb_j}\\
 &\hspace{2cm}\times \sup_{u^1,\ldots,u^n\in[\epsilon,T]^N}\Bigg\{\int_{S_l(1)}\prod_{\substack{i\in\{1,\ldots,n\}\\ b_i\ne 0}}\sum_{j=1}^n|p_{i,j}((\eta_{i,j}(r))_{i,j=1,\ldots,n})|^{b_{i}}\ud r\Bigg\},
 \end{split}
 \end{equation}
 where each $p_{i,j}((\eta_{i,j}(r))_{i,j=1,\ldots,n})$ is the $(i,j)$-element of the adjugate of the matrix $(\eta_{i,j}(r))_{i,j=1,\ldots,n}$. 
Finally Lemma \ref{lem::bound_increm} follows from (\ref{bound_sum_z_1}) and (\ref{bound_sum_z_2}), with
\begin{equation}\footnotesize
\label{def:c0}
c_{3,2}(n) = c_{5,2}(n)\vee c_{5,3}(n).
\end{equation}

\section{Proof of Lemma \ref{lem::E_L_bound}}
\label{Proof_Lemma_218}
We first point out that, in order to show (\ref{lem::E_L_bound_result}) holds for all integer $n\ge1$, it suffices to prove that it holds for even integers $n\ge2$, thanks to the Cauchy-Schwarz inequality. Therefore in the following we assume $n$ is an even integer.

By \cite[Equation  (25.7)]{GH80}, we have: for all $x,y\in\mathbb{R}^d$, Borel sets $J \subseteq I$, 
and all even integer $n\geq 2$,
\begin{equation}\footnotesize
\label{eqn::E_L_diff}
\mathbb{E}[(L(x,J) - L(y,J))^n] = (2\pi)^{-nd} \int_{J^n} \int_{\mathbb{R}^{nd}} \prod_{j=1}^n \big(e^{-i\langle v^j,x\rangle} - e^{-i\langle v^j,y\rangle}\big)\mathbb{E}\big[ e^{i \sum_{j=1}^n \langle v^j, X^{H(u^j)}(u^j)\rangle} \big]\ud\bar{v} \ud\bar{u}.
\end{equation}
Pick any $u^*\in I_{a,\delta}$ and let $\gamma(H(u^*))\in\{1,\ldots,N\}$ be the unique integer satisfying  (\ref{ineqn::d_gamma}).
Let $\kappa_n(H(u^*))$ be the real number satisfying (\ref{def:kappa}). By the elementary inequality
\footnotesize
\begin{equation*}
|e^{ix}-1| \leq 2^{1-\kappa_n(H(u^*))} |x|^{\kappa_n(H(u^*))}, \quad \mbox{for all}~x\in \mathbb{R}
\end{equation*}\normalsize
and the triangle-type inequalities in  (\ref{ineq:triagnle}), we have for all $v^1,\ldots,v^n,x,y\in\mathbb R^d$,
\begin{equation}\footnotesize
\label{eqn::prod_e}
\prod_{j=1}^n |e^{-i\langle v^j,x\rangle} - e^{-i\langle v^j,y\rangle}| \leq 2^{n(1-\kappa_n(H(u^*)))} |x-y|^{n\kappa_n(H(u^*))} 
\sum_{\substack{j\in\{1,\ldots,n\}\\ k_j\in\{1,\ldots,d\}}} \prod_{j=1}^n |v_{k_j}^j|^{\kappa_n(H(u^*))}.
\end{equation}
\normalsize
The inequalities (\ref{lem::inequ_X_Y_Z}) and the fact that $\gamma(H(u^*))\le N$ yield
\begin{equation}\footnotesize
    \label{bound_exp_Z}
    \begin{split}
\Big\| \sum_{j=1}^n v_k^j X_k^{H(u^j)}(u^j)
\Big\|_{\alpha}^{\alpha}\ge \sum_{l=1}^{N} \Big\| \sum_{j=1}^n v_k^j Z_l(u^j)
\Big\|_{\alpha}^{\alpha}\ge \sum_{l=1}^{\gamma(H(u^*))} \Big\| \sum_{j=1}^n v_k^j Z_l(u^j)
\Big\|_{\alpha}^{\alpha}.
\end{split}
\end{equation}
Since $n\ge2$ is even, the left-hand side of (\ref{eqn::E_L_diff}) is nonnegative. Combining (\ref{eqn::E_L_diff}), (\ref{eqn::prod_e}) and (\ref{bound_exp_Z}), and using the independence of 
$X_k^{H(\bullet)}$, $k=1,\ldots,d$, we have
\begin{equation}\footnotesize
\label{eqn::E_L_x_y_bound}
\begin{split}
&\mathbb{E} [(L(x, I_{a,\delta}) - L(y,I_{a,\delta}))^n] \leq \left(2\pi\right)^{-nd}2^{n\left(1-\kappa_n\left(H\left(u^*\right)\right)\right)} |x - y|^{n\kappa_n(H(u^*))}\\ 
&\hspace{1cm}\times\sum_{\substack{j\in\{1,\ldots,n\}\\ k_j\in\{1,\ldots,d\}}}\int_{I_{a,\delta}^n} \ud\overline{u} 
\int_{\mathbb{R}^{nd}} \Big(\prod_{j=1}^n |v_{k_j}^j|^{\kappa_n(H(u^*))}\Big)\prod_{k=1}^d e^{ - \sum\limits_{l=1}^{\gamma(H(u^*))} 
\| \sum\limits_{j=1}^n v_k^j Z_l(u^j)
\|_{\alpha}^{\alpha} } \ud\overline{v}  \\
&\leq |x - y|^{n\kappa_n(H(u^*))}\hspace{-0.2cm} \sum_{\substack{j\in\{1,\ldots,n\}\\ k_j\in\{1,\ldots,d\}}}
\int_{I_{a,\delta}^n} \ud\overline{u} \prod_{k=1}^d\int_{\mathbb{R}^{n}} \Big(\prod_{j=1}^n |v_{k}^j|^{\kappa_n(H(u^*))
\eta_k(k_j)}\Big) e^{ - \sum\limits_{l=1}^{\gamma(H(u^*))} \| \sum\limits_{j=1}^n v_k^j Z_l(u^j) \|_{\alpha}^{\alpha} } \ud v_k,
\end{split}
\end{equation}
where $
\eta_k(u) := \begin{cases}
      1 & \textrm{ if } u=k, \\
      0 & \textrm{ if } u\neq k.
   \end{cases}$. Now take $\Delta=n^{-1}, q=d, \vartheta_l=h_l(u^*)$ for $l=1,\ldots,N$ in Lemma \ref{rho_p_q_relation} and let 
$p_1,\ldots,$ $p_{\gamma\left(H\left(u^*\right)\right)}$  satisfy (\ref{rho_p_q_relation_result1}). Observe that since $n\kappa_n(H(u^*))\in(0,\frac{\alpha(H(u^*))}{2\gamma(H(u^*))})$, 
it follows from (\ref{rho_p_q_relation_result3}) that there exists $l_0\in\{1,\ldots,\gamma(H(u^*))\}$
 (depending on $\kappa_n(H(u^*))$) such that
\begin{equation}\footnotesize
\label{dhpkappaless1}
    h_{l_0}(u^*)\Big(\frac{d}{p_{l_0}}+2n\kappa_n(H(u^*))\Big)<1.
\end{equation}
Combining (\ref{eqn::E_L_x_y_bound}) with the generalized H\"older's inequality, we obtain
\begin{equation}\footnotesize
\label{eqn::E_L_x_y_bound_cont}
\begin{split}
&\mathbb{E} [(L(x, I_{a,\delta}) - L(y,I_{a,\delta}))^n] \leq |x - y|^{n\kappa_n(H(u^*))}  \\
& \qquad \times \sum_{\substack{j\in\{1,\ldots,n\}\\ k_j\in\{1,\ldots,d\}}} \int_{I_{a,\delta}^n} \prod_{k=1}^d
\bigg\{\mathcal M_{l_0,k,k_1,\ldots,k_n}(\overline u)^{1/p_{l_0}}\hspace{-5mm}\prod_{\substack{ l\in\{1,\ldots,\gamma(H(u^*))\}\\
l\neq l_0} }\mathcal M_{l,k}(\overline u)^{1/p_{l}}\bigg\}\ud\overline{u},
\end{split}
\end{equation}
where
\begin{equation}\footnotesize
\label{def:M_1}
\mathcal{M}_{l_0,k, k_1,\ldots,k_n}(\overline u) :=  \int_{\mathbb{R}^{n}} \Big(\prod_{j=1}^n |v_{k}^j|^{\kappa_n(H(u^*))\eta_k(k_j)p_{l_0}}\Big)
e^{ -p_{l_0}  \| \sum_{j=1}^n v_k^j Z_{l_0}(u^j) \|_{\alpha}^{\alpha} } \ud v_k
\end{equation}
and 
\footnotesize
$$
\mathcal{M}_{l,k}(\overline u) := \int_{\mathbb{R}^{n}} e^{ -p_{l}  \| \sum_{j=1}^n v_k^j Z_l(u^j)\|_{\alpha}^{\alpha} }\ud v_k.
$$
\normalsize
Next we provide upper bounds of $\mathcal{M}_{l_0,k, k_1,\ldots,k_n}(\overline u)$ and $\mathcal{M}_{l,k}(\overline u)$, respectively.\\ \\
\noindent\textbf{Upper bound of $\mathcal{M}_{l_0,k, k_1,\ldots,k_n}(\overline u)$:}\\
Taking $b_i=\kappa_n(H(u^*))\eta_k(k_i)p_{l_0}$, $j=1,\ldots,n$ in Lemma \ref{lem::bound_increm} and using    (\ref{up_bound_u_pi_by_u_star}), 
we derive that, for $\delta>0$ small enough, there is a constant $c_{5,4}(l_0,n)>0$ such that 
\begin{equation}\footnotesize
\label{M_l_0_bound_cont}
\mathcal{M}_{l_0,k,k_1,\ldots,k_n}(\overline u)\leq c_{5,4}(l_0,n)\prod_{j=1}^n\big(u_{l_0}^{\pi_{l_0}(j)} - u_{l_0}^{\pi_{l_0}(j-1)}\big)^{-(h_{l_0}(u^*)+c_0(\delta)) 
(1+\kappa_n(H(u^*))p_{l_0}\sum\limits_{i=1}^n\eta_k(k_i) )},
\end{equation}
where $c_0(\delta)$ is given in (\ref{c_0_delta}). \\ \\
\noindent \textbf{Upper bound of $\mathcal{M}_{l,k}(\overline u)$:}\\
Similarly, applying Remark \ref{cor_Lemma} and 
(\ref{up_bound_u_pi_by_u_star}), we easily obtain, for each $l\in \{1,\ldots,\gamma(H(u^*))\}\backslash\{l_0\}$, 
there is $c_{5,5}(l)>0$ such that
\begin{equation}\footnotesize
\label{M_l_bound}
\mathcal{M}_{l,k}(\overline u) \leq c_{5,5}^n(l) \prod_{j=1}^n\big(u_{l}^{\pi_{l}(j)} - u_{l}^{\pi_{l}(j-1)}\big)^{-(h_{l}(u^*)+
c_0(\delta))}.
\end{equation}

For $l=1,\ldots,N$, let $\pi_l$ be defined as in (\ref{Bigpi}). Now combining (\ref{eqn::E_L_x_y_bound_cont}), 
(\ref{M_l_0_bound_cont}), (\ref{M_l_bound}) and using the fact that $\sum_{k=1}^d \sum_{i=1}^n\eta_k(k_i)= n$ 
for $k_i=1,\ldots,d$, we obtain
\begin{equation}\footnotesize
\begin{split}
\label{eqn::E_L_x_y_bound_cont_delta}
&\mathbb{E} [(L(x, I_{a,\delta}) - L(y,I_{a,\delta}))^n]  \leq c_{5,6}(n)|x - y|^{n\kappa_n(H(u^*))} \\
&\times \sum_{\substack{j\in\{1,\ldots,n\}\\ k_j\in\{1,\ldots,d\}}}\sum_{\substack{\pi_1,\ldots,\pi_{\gamma(H(u^*))}\\ \in\mathcal S(n)}} 
\Big(\int_{\Pi_{l_0}}\prod_{j=1}^n
\big(u_{l_0}^{\pi_{l_0}(j)} - u_{l_0}^{\pi_{l_0}(j-1)}\big)^{-(h_{l_0}(u^*)+c_0(\delta))(d/p_{l_0}+n\kappa_n(H(u^*))}\ud u_{l_0} \Big) \\
&\times\Bigg\{\prod_{\substack{l=1,\ldots,\gamma(H(u^*)) \\ l\neq l_0} } \Big(\int_{\Pi_{l}} 
\prod_{j=1}^n\big(u_{l}^{\pi_{l}(j)} - u_{l}^{\pi_{l}(j-1)}\big)^{-(h_{l}(u^*)+c_0(\delta))d/p_{l}}
\ud u_{l} \Big)\Bigg\} \delta^{n(N-\gamma((u^*)))},
\end{split}
\end{equation}
where 
\footnotesize
$$
c_{5,6}(n)=\sup_{l_0,m\in\{1,\ldots,N\}}c_{5,4}(l_0,n)^{d/p_{l_0}}\prod\limits_{{l=1,\ldots,m,~ l\ne l_0}}c_{5,5}(l)^{nd/p_{l}}.
$$
\normalsize
Since (\ref{dhpkappaless1}) holds, we are able to choose $\delta\in(0,1]$ small enough so that the following inequality also holds:
\begin{equation}\footnotesize
\label{dhpkappadeltaless1}
    \left(h_{l_0}(u^*)+c_0(\delta)\right)\Big(\frac{d}{p_{l_0}}+2n\kappa_n(H(u^*))\Big)<1.
\end{equation}
Thanks to (\ref{dhpless1}) and (\ref{dhpkappadeltaless1}), the integrals in (\ref{eqn::E_L_x_y_bound_cont_delta}) are finite. 
Then similar to the derivation of (\ref{eqn::I_pi_l_bound_cont}), 
\begin{equation}\footnotesize
\label{eqn::E_L_x_y_bound_cont_cube}
\begin{split}
&\mathbb{E} [(L(x, I_{a,\delta}) - L(y,I_{a,\delta}))^n]  \\
&\leq c_{3,11}(n)|x - y|^{n\kappa_n(H(u^*))}\delta^{n\big(N-(1-1/n)\sum_{l=1}^{\gamma(H(u^*))}dh_l(u^*)/p_l-(1-1/n)h_{l_0}(u^*)n\kappa_n(H(u^*))\big)}, 
\end{split}
\end{equation}
where
\footnotesize
\begin{equation*}
    \begin{split}
      &  c_{3,11}(n)=c_{5,6}(n)d^n\sup_{\substack{m\in \{1,\ldots,N\}\\ u\in I\\ \delta\in[0,1]}}\Big\{(n!)^{(h_{m}(u)+c_0(\delta))n\kappa_n(H(u))}\\
      &\hspace{1cm}\times\prod_{l=1}^{m}c_{5,7}^n(l,u)(n!)^{d(h_l(u)+c_0(\delta))/p_l}\delta^{-(n-1)\left(dc_0(\delta)/p_l 
      +c_0(\delta)n\kappa_n(H(u))\right)}\Big\}.
    \end{split}
\end{equation*}
\normalsize
Applying Lemma \ref{rho_p_q_relation} with $\Delta=n^{-1}, q=d, \vartheta_l=h_l(u^*)$, we obtain
\footnotesize
\begin{equation*}
\begin{split}
 \Big(1-\frac{1}{n}\Big)\sum_{l=1}^{\gamma(H(u^*))}\frac{h_l(u^*)d}{p_l}\le h_{\gamma(H(u^*))}(u^*)d+\gamma(H(u^*))-
 \sum_{l=1}^{\gamma(H(u^*))}\frac{h_{\gamma(H(u^*))}(u^*)}{h_{l}(u^*)}.
 \end{split}
\end{equation*}
\normalsize
W.l.o.g., we can assume that $0<h_{1}(u^*)\leq\ldots\leq h_{N}(u^*)<1$. Therefore,  (\ref{eqn::E_L_x_y_bound_cont_cube}) yields
\begin{equation}\footnotesize
\label{E_L_x_y_bound_done}
\mathbb{E} [(L(x, I_{a,\delta}) - L(y,I_{a,\delta}))^n] \leq c_{3,11}(n) |x - y|^{n\kappa_n(H(u^*))}\delta^{n\big(\beta(H(u^*))-(n-1)h_{\gamma(H(u^*))}(u^*)\kappa_n(H(u^*))\big)}.
\end{equation}
Since the choice of $u^*$ in (\ref{E_L_x_y_bound_done}) is arbitrary in $I_{a,\delta}$ and the order of coordinates in $H(\bullet)$ 
can be arbitrary, taking the infimum over $u^*\in I_{a,\delta}$ and $\sigma\in\mathcal S(N)$ on both hand sides of 
(\ref{E_L_x_y_bound_done}) leads to (\ref{lem::E_L_bound_result}). Lemma \ref{lem::E_L_bound} is proved.

\end{appendix}
\section*{Acknowledgements}
Yimin Xiao's research is supported in part by the NSF grant DMS-1855185. We would like to thank the referees for their thoughtful review of an earlier version of the manuscript, leading to this much improved final version. We also thank Zhiye Lu for valuable discussions related to this paper.


\bibliographystyle{imsart-number.bst}
\bibliography{ml}

\end{document}